\documentclass[12pt]{amsart}

\usepackage[all]{xy}
\usepackage{fullpage}
\usepackage[expansion=false]{microtype}
\usepackage{needspace}
\usepackage{latexsym}
\usepackage{amsmath}
\usepackage{amsfonts}
\usepackage{amssymb}
\usepackage{amsthm}
\usepackage{eucal}
\usepackage{enumerate}
\usepackage{mathrsfs}
\usepackage{graphicx}
\usepackage{graphics}
\usepackage{epstopdf}
\usepackage{amscd}
\usepackage{hyperref}
\usepackage{url}
\usepackage{color}
\usepackage{cancel}
\usepackage{enumerate}
\usepackage{amsmath,amsthm}
\usepackage{amssymb}
\usepackage{epsfig}
\usepackage{xy}
\usepackage{xypic}
\usepackage{tikz}

\usepackage{amsthm}
\usepackage{mathrsfs} 
\usepackage{amssymb}
\usepackage{tikz-cd}
\usepackage{hyperref}
\IfFileExists{stmaryrd.sty}{\usepackage{stmaryrd}}{%
  \newcommand{\sslash}{\mathbin{/\mkern-6mu/}}%
  \newcommand{\fatslash}{\mathbin{\boldsymbol{/}}}%
}
\usepackage[all]{xy}
\usepackage{todonotes}
\usepackage{cleveref}

\usepackage[all]{xy}
\usepackage{latexsym}
\usepackage{amsmath}
\usepackage{amsfonts}
\usepackage{amssymb}
\usepackage{amsthm}
\usepackage{eucal}
\usepackage{enumerate}
\usepackage{mathrsfs}
\usepackage{graphicx}
\usepackage{graphics}
\usepackage{epstopdf}
\usepackage{amscd}
\usepackage{hyperref}
\usepackage{url}
\usepackage{color}
\usepackage{cancel}
\usepackage{enumerate}
\usepackage{amsmath,amsthm}
\usepackage{amssymb}
\usepackage{epsfig}
\usepackage{xy}
\usepackage{xypic}
\usepackage{tikz}

\newtheorem{theorem}{Theorem}[subsection]
\newtheorem{corollary}[theorem]{Corollary}
\newtheorem{lemma}[theorem]{Lemma}
\newtheorem{proposition}[theorem]{Proposition}

\theoremstyle{remark}
\newtheorem{remark}[theorem]{Remark}

\theoremstyle{definition}

\theoremstyle{definition}

\newtheorem{example}[theorem]{Example}

\newcommand{\IC}{\mathrm{IC}}
\newcommand{\quash}[1]{}  

    \newcommand{\bR}{\mathbf{R}}
    \newcommand{\bC}{\mathbf{C}}
    \newcommand{\bZ}{\mathbf{Z}}

    \newcommand{\Hom}{\mathrm{Hom}}
    
    \newcommand{\Sym}{\operatorname{Sym}}
    
    \newcommand{\tildeboxtimes}{\widetilde{\boxtimes}}

    \newcommand{\Gr}{\mathrm{Gr}}
    
    \newcommand{\Sp}{\mathrm{Sp}}

  \newcommand{\fc}{\mathfrak{c}} 
   
 \newcommand{\fg}{\mathfrak{g}}   \newcommand{\fb}{\mathfrak{b}}  
 \newcommand{\fn}{\mathfrak{n}}  
    
    \newcommand{\fk}{\mathfrak{k}}

\newcommand{\on}{\mathrm}

\newcommand{\renfatslash}{{\hspace{-0.2em}\fatslash}}  

\tikzset{node distance=2em, ch/.style={circle,draw,on chain,inner sep=2pt},chj/.style={ch,join},every path/.style={shorten >=4pt,shorten <=4pt},line width=1pt,baseline=-1ex}

\DeclareMathSymbol{A}{\mathalpha}{operators}{`A}
\DeclareMathSymbol{B}{\mathalpha}{operators}{`B}
\DeclareMathSymbol{C}{\mathalpha}{operators}{`C}
\DeclareMathSymbol{D}{\mathalpha}{operators}{`D}
\DeclareMathSymbol{E}{\mathalpha}{operators}{`E}
\DeclareMathSymbol{F}{\mathalpha}{operators}{`F}
\DeclareMathSymbol{G}{\mathalpha}{operators}{`G}
\DeclareMathSymbol{H}{\mathalpha}{operators}{`H}
\DeclareMathSymbol{I}{\mathalpha}{operators}{`I}
\DeclareMathSymbol{J}{\mathalpha}{operators}{`J}
\DeclareMathSymbol{K}{\mathalpha}{operators}{`K}
\DeclareMathSymbol{L}{\mathalpha}{operators}{`L}
\DeclareMathSymbol{M}{\mathalpha}{operators}{`M}
\DeclareMathSymbol{N}{\mathalpha}{operators}{`N}
\DeclareMathSymbol{O}{\mathalpha}{operators}{`O}
\DeclareMathSymbol{P}{\mathalpha}{operators}{`P}
\DeclareMathSymbol{Q}{\mathalpha}{operators}{`Q}
\DeclareMathSymbol{R}{\mathalpha}{operators}{`R}
\DeclareMathSymbol{S}{\mathalpha}{operators}{`S}
\DeclareMathSymbol{T}{\mathalpha}{operators}{`T}
\DeclareMathSymbol{U}{\mathalpha}{operators}{`U}
\DeclareMathSymbol{V}{\mathalpha}{operators}{`V}
\DeclareMathSymbol{W}{\mathalpha}{operators}{`W}
\DeclareMathSymbol{X}{\mathalpha}{operators}{`X}
\DeclareMathSymbol{Y}{\mathalpha}{operators}{`Y}
\DeclareMathSymbol{Z}{\mathalpha}{operators}{`Z}

\newcommand{\ld}[1]{{\check{#1}}}
\newcommand{\GL}{\mathrm{GL}}

\newcommand{\SO}{\mathrm{SO}}
\newcommand{\GSO}{\mathrm{GSO}}

\title{Relative Langlands duality for linear periods}

\author{Tsao-Hsien Chen}
        \address{School of Mathematics, University of Minnesota, Twin cities, Minneapolis, MN 55455}
         \email{chenth@umn.edu}
         \author{Sanath K. Devalapurkar}
        \address{School of Mathematics. Institute for Advanced Study 1 Einstein Drive Princeton, New Jersey 08540}
         \email{skdevalapurkar@gmail.com}
         \author{Mark Macerato}
        \address{School of Mathematics, University of Minnesota, Twin cities, Minneapolis, MN 55455 }
         \email{macer008@umn.edu}
 \author{David Nadler}
        \address{Department of Mathematics, UC Berkeley, Evans Hall, Berkeley, CA 9472}
         \email{nadler@math.berkeley.edu}

\begin{document}

\begin{abstract}
We prove relative Langlands duality for
$X = GL_{p+q}/(GL_p \times GL_q)$.
These symmetric spaces arise in the setting of the linear period of Friedberg–Jacquet when $p=q$, and exhibit non-tempered behavior when $q>p+1$. Via the real-symmetric correspondence, we establish a derived Satake equivalence for the real group $G_\bR=U(p,q)$, and use it to obtain a constructible realization of coherent sheaves on Kato's exotic nilpotent cones. Along the way, we describe the dual variety of $X$ in terms of the Whittaker cotangent bundle and regular centralizers, viewed as Coulomb branches.

\end{abstract}
\maketitle
\tableofcontents
\setcounter{tocdepth}{1}


\section{Introduction}


\subsection{Relative Langlands duality} 
The relative Langlands program of Ben-Zvi, Sakellaridis and Venkatesh~\cite{BZSV}  
proposes geometric mechanisms for observed and conjectural relations 
between periods and L-functions.

A fundamental conjecture in the program predicts  that given a 
complex reductive group $G$, with dual group $\check G$, and a 
homogeneous
spherical $G$-variety $X$, there exists 
a (graded) Hamiltonian $\check G$-variety
$\check M$  with moment map 
$\mu:\check M\to\check\fg^*$ equipped with 
a commuting $\mathbb G_m$-action satisfying $\mu(t\cdot m)=t^2\mu(m)$,
and a relative derived Satake equivalence 
\begin{equation}\label{eq:rel sat}
D^b_{\frak L^+G}(\frak LX)\cong \mathrm{Perf}^{\check G}(\check M)^\renfatslash
    \end{equation}

 On the left or automorphic side, $\frak LX$ is the formal loop space of $X$,  $\frak L^+G$ is the formal arc space of $G$, and $D^b_{\frak L^+G}(\frak LX)$ is the bounded constructible dg-derived category of $\frak L^+G$-equivariant  sheaves on $\frak LX$. On the right  or spectral/Galois side,  $\mathrm{Perf}^{\check G}(\check M)^\renfatslash$ is the sheared dg-category of
$\check G$-equivariant perfect dg-modules over the 
ring of functions on $\check M$ viewed as a dg-algebra
with trivial differential and grading given by the $\mathbb G_m$-action.
The shearing notation and the shifted vector spaces used below are
defined in Section~\ref{sect:shearing}; the parity convention is
specified in Section~\ref{sect:abelian}. 

Moreover, the relative derived Satake equivalence \eqref{eq:rel sat}  should be 
compatible with the natural monoidal actions under the  derived Satake equivalence 
$$D^b_{\frak L^+G}(\Gr_G)\cong 
\on{Perf}^{\check G}(\check\fg^*[2])$$ 
where $\Gr_G = \frak L G/\frak L^+G $ is the affine Grassmannian,  $D^b_{\frak L^+G}(\Gr_G)$ acts 
on  $D^b_{\frak L^+G}(\frak LX)$ by
convolution, and  $\on{Perf}^{\check G}(\check\fg^*[2])$ acts on  $\on{Perf}^{\check G}(\check M)^\renfatslash$ by 
tensor product  through $\mu$.

\subsection{Linear period of Friedberg--Jacquet} 
An important  instance of relative duality is the linear period of  Friedberg--Jacquet~\cite{FJ}:
here one takes  $G=GL_{2p}$, so with dual group $\check G=GL_{2p}$,  and  studies $X = GL_{2p}/(GL_p \times GL_p)$ with  expected dual 
$$
\check M = GL_{2p} \times^{\Sp_{2p}} ( \wedge^2\mathbb A^{2p}[2]\times T^*\mathbb A^{2p}[1] )
$$
$$
\check M/ \check G =  ( \wedge^2\mathbb A^{2p}[2]\times T^*\mathbb A^{2p}[1]  )/ \Sp_{2p}
$$
Similar phenomena can be observed  more generally, when 
one  takes $G=GL_{p+q}$, $q\geq p$, so with dual group  $\check G=GL_{p+q}$,  and studies $X =GL_{p+q}/(GL_p \times GL_q)$  with  expected dual
$$
\check M = GL_{p+q} \times^{\Sp_{2p}} ( \wedge^2\mathbb A^{2p}[2]\times T^*\mathbb A^{2p}[q-p + 1] \times\frak{gl}^*_{q-p}[2]\sslash GL_{q-p}  )
$$
$$
\check M/ \check G =  ( \wedge^2\mathbb A^{2p}[2]\times T^*\mathbb A^{2p}[q-p + 1] \times\frak{gl}^*_{q-p}[2]\sslash GL_{q-p}  )/ \Sp_{2p}
$$
In the associated-bundle presentation the grading also acts on the
group factor by $g\mapsto g\lambda(t)^{-1}$, with $\lambda$ as in
\eqref{rho}; after quotienting by $\check G$, only the displayed fiber
degrees remain. When $p=q$, the final $GL_{q-p}$ quotient factor
is a point.
Note this is tempered if and only if $q=p$ or $q=p+1$.

Our main result is a relative derived Satake equivalence in this setting:

\begin{theorem}(see Theorem~\ref{main}) \label{intro main symmetric}
For $G=GL_{p+q}$, $q\geq p$, and $X = GL_{p+q}/(GL_p \times GL_q)$, 
there is a relative derived Satake equivalence of dg-categories
\[D^b_{\frak L^+ G}(\frak L X) 
\cong\mathrm{Perf}^{\Sp_{2p}}(\wedge^2\mathbb A^{2p}[2]\times T^*\mathbb A^{2p}[q-p+1]\times\frak{gl}^*_{q-p}[2]\sslash GL_{q-p})\]
compatible with the natural  monoidal actions under  the 
derived Satake equivalence
$$
D^b_{\frak L^+G}(\Gr_G)\cong 
\on{Perf}^{\check G}(\check\fg^*[2])
$$
\end{theorem}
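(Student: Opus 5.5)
The plan is to follow the strategy indicated in the abstract and pass through the real-symmetric correspondence. The Nadler--Chen--Yun real-symmetric correspondence identifies $D^b_{\frak L^+G}(\frak LX)$, for the symmetric pair $(GL_{p+q},GL_p\times GL_q)$, with $D^b_{G_\bR}(\Gr_{G_\bR})$ for the real form $G_\bR=U(p,q)$. This identification is compatible with the convolution actions of $D^b_{\frak L^+G}(\Gr_G)$. The theorem therefore reduces to a derived Satake equivalence for $U(p,q)$ whose spectral side is the stated sheared category.

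The first step is formality. I would check that the $G_\bR$-orbits on $\Gr_{G_\bR}$ are indexed by the dominant coweights of the real rank-$p$ torus. I would then verify parity vanishing of stalks and costalks of the IC sheaves, using the real-symmetric description and the results of Section~\ref{sect:abelian}. Together with the purity of the Ext groups this shows that the dg-category is formal. It is then generated by the Satake action on the unit object $\delta=\IC_0$ of the closed orbit, and it is equivalent to perfect modules over the graded algebra
\[
A=\bigoplus_{V}\mathrm{Ext}^*(\delta, V*\delta)\otimes V^*,
\]
where $V$ runs over $\check G$-representations. Generation follows from the fact that every IC sheaf appears as a summand of some $V*\delta$, which is a Mirković--Vilonen-type argument on real semi-infinite orbits.

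The second step, which I expect to be the main obstacle, is to identify $A$ with $\mathcal O(\check M)$, keeping track of its grading. Here I would use the Whittaker and Coulomb-branch description mentioned in the abstract. First I would compute
\[
\mathrm{Ext}^*(\delta,\delta)=H^*_{G_\bR}(\mathrm{pt})\cong H^*_{U(p)\times U(q)}(\mathrm{pt}).
\]
Localization to the fixed points of the maximal compact torus should then embed $A$ into a regular-centralizer (Whittaker) model, that is, functions on $\check G\times^{\Sp_{2p}}(\dots)$ restricted over the Kostant slice. A first pass would treat $p=q$, where the spectral side is the Friedberg--Jacquet space $\wedge^2\mathbb A^{2p}[2]\times T^*\mathbb A^{2p}[1]$. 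For general $q$ I would reduce to a Levi-type factorization: the $\frak{gl}^*_{q-p}[2]\sslash GL_{q-p}$ factor accounts for the compact part $U(q-p)$, whose equivariant cohomology is its Chevalley ring. The shift $[q-p+1]$ on $T^*\mathbb A^{2p}$ comes from the codimension of the relevant orbits, and normality and flatness let the isomorphism extend from an open locus.

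Finally, compatibility with the monoidal actions is built into the construction, because $A$ is defined through the Satake action and the moment map corresponds to the action of $H^*_{\check G}$. Shearing (Section~\ref{sect:shearing}) converts the graded-module statement into the displayed equivalence with $\mathrm{Perf}^{\Sp_{2p}}$, after quotienting by $\check G$ in the associated-bundle presentation.
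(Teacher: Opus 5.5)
There is a genuine gap. Your outline has the right skeleton: transfer via the real--symmetric equivalence, generation by $\IC^0_\bR$ under the Hecke action, formality, and then the standard argument turning the Ext algebra into the equivalence. But the step carrying essentially all of the content is not supplied, namely the identification $H^*(A_\bR)\cong\mathcal O(\check M)$ as graded $\check G$-equivariant $\mathcal O(\check\fg^*)$-algebras.

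Localization to torus fixed points only embeds the Ext algebra into a localized ring. To recognize the image as $\mathcal O(\check M)$ you need a concrete model for it and a geometric comparison of that model with $\check M$, and your sketch gives neither. In the paper the model is $H^*(A_X)\cong H^*_K(\Gr_K,\iota_K^!\IC_{\mathcal O(\check G)})\cong\mathcal O(T^*_{\psi_{\check K}}(\check G/\check U))$, using results of Macerato and Ginzburg together with Proposition~\ref{Wh=J}. The new input is Theorem~\ref{affine closures}, $\check M\cong\overline{T^*_{\psi_{\check K}}(\check G/\check U)}$. Its proof needs:
\begin{itemize}
\item the generalized Kostant theorem $\check U\times^{\check U_X}\Sigma_{p,q}\cong\check\fb+f_{p,q}$ (Theorem~\ref{whittaker-main}; $\check U$ does not act freely on $\check\fb+f_{p,q}$);
\item a Lagrangian correspondence realizing the Whittaker cotangent bundle as the open cyclic locus in $\check M$ where $v,xv,\dots,x^{p-1}v$ are independent;
\item a dimension count of the strata $\ell^{-1}(i)$ showing the complement has codimension at least two (Proposition~\ref{big open});
\item Hartogs on the smooth affine variety $\check M$.
\end{itemize}
Your phrase ``normality and flatness let the isomorphism extend from an open locus'' points at the last step. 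But you never say which open locus, which map, or why the complement has codimension two, and that is the missing argument.

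Your heuristics for $p<q$ do not fill this gap. $U(p,p)\times U(q-p)$ is not a Levi subgroup of $U(p,q)$; its Levis are $U(p-a,q-a)\times\prod_i GL_{n_i}(\bC)$. So there is no evident ``Levi-type factorization'' reducing to $p=q$. The degree $q-p+1$ on $T^*\mathbb A^{2p}$ has to come out of the $\mathbb G_m$-equivariance of the comparison map, not from an orbit codimension. In the paper this means matching the grading via $\gamma$ on the Whittaker side, with the $\mathcal S_\delta$ block in $\Sigma_{p,q}$, against the grading via $\lambda=2\rho_{\check L}$ on $\check M$.

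The formality step also needs repair. Appealing to Section~\ref{sect:abelian} is circular, since that section is deduced from this theorem. Parity vanishing cannot give formality by degree reasons, because $\mathcal O(\check M)$ has odd-degree generators whenever $q-p$ is even (e.g.\ $T^*\mathbb A^{2p}[1]$ for $p=q$). The paper instead feeds pointwise purity of the $\IC^\lambda_X$ from \cite{CY} into \cite[Theorem 13.4]{CN}.

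Finally, generation also needs connectedness of the $\mathfrak L^+G_\bR$-stabilizers (Lemma~\ref{connectedness}), so that only constant local systems occur.
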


Within the above derived Satake equivalence, we also find an abelian equivalence.
According to \cite[Section 8.4]{CN}, the category  $D_{\frak L^+G}^b(\frak LX)$ admits a
perverse $t$-structure.  We denote the corresponding category of perverse sheaves by $\on{Perv}_{\frak L^+G}(\frak LX)$.
\begin{theorem}(see Theorem \ref{abelian equ})
There is an equivalence of abelian category
 \[
	\mathrm{Perv}_{\frak L^+G}(\frak LX)\cong
	\begin{cases}
		\wedge^*(T^*\mathbb A^{2p})\on{-mod}(\mathrm{Rep}(\Sp_{2p}))\ \ \ \ \ p=q,\\
		\mathrm{Rep}(\Sp_{2p})\ \ \ \ \ \ \ \ \ \ \ \ \ \ \ \ \ \ \ \ \ \ \ \ \ \ \ \ p\neq q.
	\end{cases}
\]
\end{theorem}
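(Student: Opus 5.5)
The plan is to deduce the abelian statement from the derived equivalence of Theorem~\ref{intro main symmetric} by identifying the perverse heart with a heart on the spectral side. I will use the description of the perverse $t$-structure from \cite[Section 8.4]{CN}. There, perverse sheaves on $\frak LX$ are glued from the $\frak L^+G$-orbits, and in our symmetric setting each orbit's stabilizer has a Lie algebra cohomology contribution. First I will check that the perverse $t$-structure is characterized by two properties: the objects $\mathcal F_V$ attached to the image of $\mathrm{Rep}(\Sp_{2p})\subset \mathrm{Perf}^{\Sp_{2p}}(\check M)^\renfatslash$ lie in the heart, and they generate it under extensions. Here $\mathcal F_V$ is the object $\mathcal O\otimes V$ restricted along the relevant point. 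Concretely, the unit object (the constant sheaf on the closed orbit, suitably shifted) should correspond to the structure sheaf. Its spherical Hecke translates, namely the convolution with $\IC$ sheaves of $\Gr_G$ restricted to $\Sp_{2p}$-representations via $\check G\supset \Sp_{2p}$, are perverse, since convolution with $\IC$'s is $t$-exact by the semismallness argument of \cite{CN}.

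Second, I will compute the Ext algebra between these generators on the spectral side using the sheared grading. The ring of functions on $\wedge^2\mathbb A^{2p}[2]\times\frak{gl}^*_{q-p}[2]\sslash GL_{q-p}$ lives in even positive cohomological degrees after shearing, so it contributes only $\mathrm{Ext}^{\ge 2}$. The factor $T^*\mathbb A^{2p}[q-p+1]$ contributes generators in degree $q-p+1$. I will split into two cases.

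\textbf{Case $q\ne p$.} Here $q-p+1\geq 2$, so all positive-degree functions sit in cohomological degree $\geq 2$. Hence $\mathrm{Ext}^1$ between the $V$'s vanishes, and $\mathrm{Hom}$ equals $\mathrm{Hom}_{\Sp_{2p}}(V,W)$. The heart is therefore semisimple and equal to $\mathrm{Rep}(\Sp_{2p})$.

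\textbf{Case $q=p$.} The odd factor $T^*\mathbb A^{2p}[1]$ gives exterior generators in degree $1$. The heart is then the category of modules in $\mathrm{Rep}(\Sp_{2p})$ over the degree-$\leq 1$ truncation. This is a Koszul-type situation: the heart of the standard $t$-structure on sheared perfect modules over $\Sym(V[-1])=\wedge^*V$ is finite-dimensional $\wedge^*(T^*\mathbb A^{2p})$-modules in $\mathrm{Rep}(\Sp_{2p})$. I will argue that this heart is transported to the perverse heart because both are generated by the same objects with matching $\mathrm{Ext}^{\le1}$.

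The main obstacle is justifying that the perverse $t$-structure corresponds exactly to the "generated in degree $0$" $t$-structure on the spectral side. This requires knowing that $\mathrm{Hom}^{<0}$ vanishes between the generators and that the heart is generated by them, i.e.\ every simple perverse sheaf $\IC$ of an orbit appears. For the latter, I would match orbits $\frak L^+G\backslash \frak LX$ with dominant weights of $\Sp_{2p}$ via the Cartan-type decomposition. For $q=p$, I would additionally show that each simple $\IC$ is the image of an irreducible $\Sp_{2p}$-representation placed in degree $0$, using the stalk/parity computations of Section~\ref{sect:abelian}. Here I would first try to use the parity vanishing of stalks to exclude odd self-extensions beyond those coming from $T^*\mathbb A^{2p}[1]$.
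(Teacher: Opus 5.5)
Your overall strategy is the paper's: transport the heart through the derived equivalence, with each $\IC^\lambda$ matched to a free module $\mathfrak B\otimes V^\lambda$. Two steps do not work as written.

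\textbf{Placing the generators in the heart.} You assert that the Hecke translates $\IC_V\star\IC^0_X$ are perverse because convolution with Satake sheaves is $t$-exact. Proposition~\ref{exactness}(1) says this holds only in the quasi-split cases $q=p,\,p+1$. For $\delta=q-p\geq 2$ the cocharacter $\lambda=\operatorname{diag}(I_{2p},\tau_\delta)$ is nontrivial. Under $\Phi^b$ one has $\IC_V\star\IC^0_X\mapsto\mathcal O_{\check M}\otimes V$. In the fiber model over $\Sp_{2p}$, $V$ is restricted along $(h,t)\mapsto h\lambda(t)$, and shearing turns $\lambda$-weights into cohomological shifts. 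For example, the standard representation of $GL_N$ produces the shifted copies $\mathfrak B[\delta-1-2j]$, $0\leq j<\delta$, of $\Phi^b(\IC^0_X)$. So the translates are sums of \emph{shifted} IC's, and Krull--Schmidt only gives $\Phi^b(\IC^\mu)\cong\mathfrak B\otimes V^\nu[k]$ with an undetermined shift $k$. Everything hinges on $k=0$, which is exactly the input $\Phi^b(\IC^\lambda_\bR)\cong\mathfrak B_{p,q}\otimes V^\lambda$ that the paper's proof rests on. You located this shift problem at $q=p$, but there it is automatic: the Hecke action is exact, semisimplicity holds by Proposition~\ref{exactness}(2), and $\lambda$ is trivial. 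The non-tempered range $q\geq p+2$ is where a real argument is needed, for instance matching the shifts in the decomposition of $\psi(\IC_V)\star\IC^0_\bR$ from \cite{N} with the $\lambda$-weights. Once that matching is in hand, your treatment of $p<q$ is correct and a bit more direct than the paper's: all generators of $\mathfrak B$ lie in degree $\geq 2$, so $\mathrm{Ext}^1$ between simples vanishes and the finite-length heart is semisimple.

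\textbf{The spectral heart for $q=p$.} Your identification here is off. With the paper's parity convention, the sheared factor $T^*\mathbb A^{2p}[1]$ contributes \emph{commuting} generators in degree $1$, since the odd parity cancels the odd degree, so $\mathfrak B_{p,p}$ is a polynomial algebra. If it really were an exterior algebra $\wedge^*W$, with $W=T^*\mathbb A^{2p}$, as you write, its Koszul-dual heart would consist of modules over a symmetric algebra, not over $\wedge^*W$. The $t$-structure you need on the spectral side is also not a standard one on $\mathfrak B$-modules. Moreover, matching $\mathrm{Ext}^{\leq 1}$ does not determine a finite-length abelian category, because the relations come from higher Yoneda products.

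The missing tool is the Koszul duality $\kappa: D^{\Sp_{2p}}_{perf}(\mathfrak B\text{-mod})\simeq D^{\Sp_{2p}}_{fd}(\mathfrak A\text{-mod})$. It sends $\mathfrak B\otimes V^\lambda$ to $V^\lambda$. The standard heart on the right is $H^0(\mathfrak A)$-modules in $\mathrm{Rep}(\Sp_{2p})$, and $H^0(\mathfrak A_{p,p})=\wedge^*W$ because $\Pi W$ sits in degree $0$ there. Given the generator matching, $\kappa\circ\Phi^b$ sends the simples $\IC^\lambda$ into the standard heart, hence sends the whole finite-length perverse heart there. Boundedness of both $t$-structures then gives $t$-exactness and the equivalence of hearts.
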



\subsection{Derived Satake equivalence for $U(p,q)$}

A  key feature of  the linear period of  Friedberg--Jacquet,
and more generally  of $G= GL_{p+q}$ acting on $X = GL_{p+q}/(GL_p \times GL_q)$ is that $X$ is a  symmetric space, i.e.~$K=GL_p \times GL_q$ is the fixed-points of an involution $\theta$ of $G$.
Among many consequences, this allows one  to reinterpret the relative derived Satake equivalence of Theorem~\ref{intro main symmetric} in terms of the unitary group 
$G_\bR=U(p,q)$, which is quasi-split if and only if $q=p$ or $q=p+1$.
Following~\cite{CN} and recalled as Proposition~\ref{real-symmetric} below, the automorphic side admits a natural equivalence
\begin{equation}\label{eq: intro mat}
D^b_{\frak L^+ G}(\frak L X) 
\cong D^b_{\frak L^+U(p,q)}( \Gr_{U(p,q)})
\end{equation}
where $\Gr_{U(p,q)}  =\frak L U(p,q)/ \frak L^+U(p,q)$ is the real affine Grassmannian of $U(p,q)$.
Moreover, the equivalence \eqref{eq: intro mat} is compatible with natural actions of the derived Satake category.

So the relative derived Satake equivalence of Theorem~\ref{intro main symmetric} can be restated as  a real derived Satake equivalence:

\begin{theorem}[see Theorem~\ref{main}]\label{intro main real}
For $G_\bR = U(p, q)$ 
there is a real derived Satake equivalence of dg-categories
$$
 D^b_{\frak L^+U(p,q) }(\Gr_{U(p,q)})
\cong\mathrm{Perf}^{\Sp_{2p}}(\wedge^2\mathbb A^{2p}[2]\times T^*\mathbb A^{2p}[q-p+1]\times\frak{gl}^*_{q-p}[2]\sslash GL_{q-p})$$
compatible with the natural  monoidal actions under  the 
derived Satake equivalence
$$
D^b_{\frak L^+G}(\Gr_G)\cong 
\on{Perf}^{\check G}(\check\fg^*[2])
$$

\quash{
(2)    There is an equivalence of dg categories
\[D_{\frak L^+U(p,q)}(\Gr_{U(p,q)})\cong\mathrm{QCoh}^{\check G}(\check M^\renfatslash)_{\mathcal N_{\check\fg^*}}\cong\]
\[\cong\mathrm{QCoh}^{\Sp_{2p}}(\wedge^2\mathbb A^{2p}[2]\times T^*\mathbb A^{2p}[q-p+1]\times\frak{gl}^*_{q-p}[2]\sslash GL_{q-p})_{\mathcal N_{\Sp_{2p}}'}\]
compatible with the action of 
derived Satake equivalence 
$D_{\frak L^+G}(\Gr_{G}))\cong\mathrm{QCoh}^{\check G}(\check\fg^*[2])_{\mathcal N_{\check\fg^*}}$ on both sides.
Moreover, it restricts to an equivalence 
\[D_{\frak L^+U(p,q)}(\Gr_{U(p,q)})^c\cong\mathrm{Perf}^{\check G}(\check M^\renfatslash)_{\mathcal N_{\check\fg^*}}\cong\]
\[\cong
\mathrm{Perf}^{\Sp_{2p}}(\wedge^2\mathbb A^{2p}[2]\times T^*\mathbb A^{2p}[q-p+1]\times\frak{gl}^*_{q-p}[2]\sslash GL_{q-p})_{\mathcal N_{sp_{2p}}'}\]
on compact objects.
}
\end{theorem}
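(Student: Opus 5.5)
Theorem~\ref{intro main real} follows from Theorem~\ref{intro main symmetric} and the equivalence \eqref{eq: intro mat}, so the work is in the relative equivalence. The plan uses the standard formality strategy of derived Satake (Bezrukavnikov--Finkelberg, and its symmetric-space versions in \cite{CN}).

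\emph{Step 1: a generator and its endomorphisms.} Let $\mathcal F_0$ be the constant (IC) sheaf on the closed $\frak L^+G$-orbit $\frak L^+X\subset\frak LX$. Show that convolution with $D^b_{\frak L^+G}(\Gr_G)$ applied to $\mathcal F_0$ generates $D^b_{\frak L^+G}(\frak LX)$ under cones, shifts and summands. For this we use the description of $\frak L^+G$-orbits on $\frak LX$ by coweights of the relevant torus together with the parity vanishing of stalks from \cite{CN}. The category is then equivalent to perfect modules over the dg-algebra
\[A=\bigoplus_{V\in\mathrm{Rep}(\check G)}\Hom^*(\mathcal F_0,\mathcal F_0*\mathcal S_V)\otimes V^*\]
in $\mathrm{Rep}(\check G)$, where $\mathcal S_V$ is the geometric Satake sheaf of $V$.

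\emph{Step 2: formality.} Show that $A$ is formal via a weight or purity argument: the relevant orbit closures have pointwise pure, parity-vanishing cohomology, by the real-symmetric correspondence and the parity results of \cite{CN}. The $\check G$-equivariant dg-category is then modules over the graded commutative algebra $H^*(A)$ with trivial differential, and the cohomological grading becomes the $\mathbb G_m$-shearing.

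\emph{Step 3: computing $H^*(A)$.} Identify $H^*(A)$ with $\mathcal O(\check M)$ as a graded $\check G$-algebra. Equivalently, identify $H^*(A)^{\check G}$ with $H^*_{\frak L^+G}(\frak L^+X)=H^*_{GL_p\times GL_q}(pt)$ and the fibres with $\mathcal O\big(\wedge^2\mathbb A^{2p}\times T^*\mathbb A^{2p}\times\frak{gl}^*_{q-p}\sslash GL_{q-p}\big)$ with the stated grading. Proceed in three parts.
\begin{enumerate}
\item Compute the equivariant cohomology ring $H^*_{\frak L^+G}(\frak L X)$-module structure using the Whittaker cotangent bundle / regular centralizer description of $\check M$ as a Coulomb branch. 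Concretely, show that $\check M\sslash\check G$ matches $\mathrm{Spec}\,H^*_{GL_p\times GL_q}(pt)$; the $\frak{gl}_{q-p}\sslash GL_{q-p}$ factor accounts for the $GL_q$ invariants beyond those of $GL_p$.
\item Compute $\mathcal O(\check M)$ as a $\check G$-representation with multiplicities, using the known $\frak L^+G$-orbit stratification and the dimension of stalk cohomology. These multiplicities match branching $GL_{p+q}\downarrow \Sp_{2p}$ of the functions on the fibre.
\item Check that the two structures are compatible: use the moment map $\mu$, which corresponds to the action of $H^*(\Gr_G)$-type Satake operators, to pin down the algebra structure. Then apply a flatness plus generic-equality argument, localizing to the regular locus where both sides are computed by the torus (abelian) case.
\end{enumerate}

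\emph{Step 4: monoidal compatibility.} Compatibility with the derived Satake action is built in, since $A$ is a module over the corresponding algebra for $\Gr_G$, namely $\mathcal O(\check\fg^*)$, through $\mu$.

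I expect Step~3 to be the main obstacle, especially the non-tempered range $q>p+1$. There the shifts $T^*\mathbb A^{2p}[q-p+1]$ and the $\frak{gl}_{q-p}$ factor make the localization argument delicate: the stabilizer is not a torus. My first attempt would be to reduce to the quasi-split case $q=p$ by a parabolic (Levi) restriction along $GL_{2p}\times GL_{q-p}$, where the $GL_{q-p}$ part contributes the regular centralizer (Kostant slice) factor.
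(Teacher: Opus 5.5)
\paragraph{Where the proposal falls short.} Your Steps~1, 2 and~4 follow the paper's template: Hecke generation by $\IC^0_\bR$, formality from pointwise purity, then the standard argument of \cite{CMNO,C}. Generation, however, rests on connected stabilizers (Lemma~\ref{connectedness}) and on writing $\lambda=\mu-\theta(\mu)$ (Lemma~\ref{real coweights}) together with \cite[Corollary 10.6.2]{N}, not on parity vanishing.

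The genuine gap is Step~3, which carries all of the new content. You invoke ``the Whittaker cotangent bundle / regular centralizer description of $\check M$'' as if it were available. But $\check M\simeq\overline{T^*_{\psi_{\check K}}(\check G/\check U)}$ is precisely the main theorem to be proved (Theorem~\ref{affine closures}).

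Your fallback, ``flatness plus generic equality over the regular locus'', does not work as stated. No algebra map between $H^*(A)$ and $\mathcal O(\check M)$ is ever constructed. Your item~(2) only compares Hilbert series in the absence of such a map. Even given a map, agreement away from the discriminant divisor does not determine the algebra: $\mathcal O(\fc)[x]\subset\mathcal O(\fc)[x/\Delta]$ are both free over $\mathcal O(\fc)$, both normal, and coincide after inverting $\Delta$. To close the argument you would need one of the following, and none is supplied:
\begin{itemize}
\item control at codimension-one points;
\item a codimension-two statement together with normality;
\item an injective map plus an exact graded-character identity.
\end{itemize}
The proposed reduction of $q>p$ to $q=p$ by parabolic restriction of sheaves also has no evident mechanism.

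\paragraph{What the paper does instead.} The paper makes $H^*(A_X)$ computable by using that $K=GL_p\times GL_q$ is a Levi subgroup. This gives $H^*(A_X)\cong H^*_K(\Gr_K,\iota_K^!\IC_{\mathcal O(\check G)})$, which by \cite{M}, \cite{Gi} and Proposition~\ref{Wh=J} is the ring of functions on $T^*_{\psi_{\check K}}(\check G/\check U)$ over $\mathcal O(\check\fg^*)$. What remains is pure geometry:
\begin{enumerate}
\item The generalized Kostant theorem (Theorem~\ref{whittaker-main}), $\check U\times^{\check U_X}\Sigma_{p,q}\simeq\check\fb+f_{p,q}$, rewrites $T^*_{\psi_{\check K}}(\check G/\check U)$ as $\check G\times^{\check U_X}\Sigma_{p,q}$.
\item The Lagrangian correspondence then gives an open embedding into $\check M$. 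In fibre coordinates $(x,v)\in\mathfrak p_{2p}\times V$, its image is the locus where $v,xv,\ldots,x^{p-1}v$ are linearly independent, and the complement has codimension at least two (Proposition~\ref{big open}).
\item Since $\check M$ is smooth and affine, Hartogs gives $\mathcal O(\check M)\cong\mathcal O(T^*_{\psi_{\check K}}(\check G/\check U))$, $\check G\times\mathbb G_m$-equivariantly over $\check\fg^*$.
\end{enumerate}
This codimension-two estimate is exactly the codimension-one control your argument lacks.

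The non-tempered range needs no separate sheaf-theoretic reduction. For $q>p$ the slice factors as $\mathcal T\times V^*\times\mathcal S_\delta$, with $\mathcal S_\delta$ the Kostant slice of $GL_{q-p}$. The embedding is then the same action map $\check G_X\times^{\check U_X}\mathcal T\to\mathfrak p_{2p}\times V$ as in the case $p=q$, times the identity.
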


According to \cite{N}, there is 
natural perverse $t$-structure on $D^b_{\frak L^+U(p,q)}(\Gr_{U(p,q)})$
and we denote by
$\mathrm{Perv}_{\frak L^+U(p,q)}(\Gr_{U(p,q)})$
the corresponding category of perverse sheaves.

\begin{theorem}(see Theorem \ref{abelian equ})
There is an equivalence of abelian category
 \[
	\mathrm{Perv}_{\frak L^+U(p,q)}(\Gr_{U(p,q)})\cong
	\begin{cases}
		\wedge^*(T^*\mathbb A^{2p})\on{-mod}(\mathrm{Rep}(\Sp_{2p}))\ \ \ \ \ p=q,\\
		\mathrm{Rep}(\Sp_{2p})\ \ \ \ \ \ \ \ \ \ \ \ \ \ \ \ \ \ \ \ \ \ \ \ \ \ \ \ p\neq q.
	\end{cases}
\]
\end{theorem}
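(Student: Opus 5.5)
The abelian equivalence for $\mathrm{Perv}_{\frak L^+U(p,q)}(\Gr_{U(p,q)})$ should follow from the one already stated for $\mathrm{Perv}_{\frak L^+G}(\frak LX)$. The plan is to show that the real-symmetric equivalence \eqref{eq: intro mat} of Proposition~\ref{real-symmetric} is $t$-exact, meaning it matches the perverse $t$-structure of \cite[Section 8.4]{CN} on $D^b_{\frak L^+G}(\frak LX)$ with that of \cite{N} on $D^b_{\frak L^+U(p,q)}(\Gr_{U(p,q)})$. I would check this on strata.

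Both sides are stratified by the same index set, namely $\frak L^+K$-orbits on $\Gr_G$ (equivalently $\frak L^+G$-orbits on $\frak LX$) and $\frak L^+U(p,q)$-orbits on $\Gr_{U(p,q)}$. The equivalence of \cite{CN} is built from the Matsuki-type correspondence, so it sends $!$- and $*$-extensions of equivariant local systems on matching strata to each other. The perverse normalizations are shifts by half the real dimension of the strata on one side and by the complex (or codimension-type) dimension on the other. I would verify, using the dimension formulas for the strata implicit in \cite{CN,N}, that these shifts agree. The standard and costandard objects then correspond, and hence so do the hearts.

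With $t$-exactness established, the desired statement is just the heart of Theorem~\ref{intro main real}. It therefore suffices to identify the heart of $\mathrm{Perf}^{\Sp_{2p}}(\wedge^2\mathbb A^{2p}[2]\times T^*\mathbb A^{2p}[q-p+1]\times\frak{gl}^*_{q-p}[2]\sslash GL_{q-p})$ under the transported $t$-structure. Here is the expected computation. Generators sitting in cohomological degree $2$ (the factors $\wedge^2[2]$ and $\frak{gl}^*_{q-p}[2]\sslash GL_{q-p}$) have no effect on the heart. The odd generators $T^*\mathbb A^{2p}[q-p+1]$ lie in degree $1$ exactly when $q=p$. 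In that case they contribute an exterior algebra $\wedge^*(T^*\mathbb A^{2p})$, whose modules in $\mathrm{Rep}(\Sp_{2p})$ form the heart. When $p\ne q$ the shift is at least $2$, so the heart is just $\mathrm{Rep}(\Sp_{2p})$. This is consistent with the case distinction in the statement. In practice I would simply cite Theorem~\ref{abelian equ} for the $\frak LX$ side and transport it.

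The main obstacle is the $t$-exactness of \eqref{eq: intro mat}. The Matsuki-type correspondence relates sheaves with different singular-support behaviour, so there is no a priori reason for perverse degrees to match. I would first try to verify the shift compatibility on a single stratum, by comparing the real dimension of a $\frak L^+U(p,q)$-orbit with the complex dimension of the corresponding $\frak L^+K$-orbit; these are expected to agree via the Cartan decomposition. I would then extend to all of $\Gr_{U(p,q)}$ by checking that the $!$-restrictions of perverse objects lie in the correct degrees.
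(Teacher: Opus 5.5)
There is a genuine gap, and it is not where you locate it. The $t$-exactness of the real--symmetric equivalence is already part of Proposition~\ref{real-symmetric}, quoted from \cite{CN}. The perversity functions do agree, $p_\bR(\lambda)=-\tfrac12\langle\lambda,2\rho\rangle=p_X(\lambda)$. Also, the orbits on $\Gr_G$ matching $\frak L^+G$-orbits on $\frak LX$ are $\frak LK$-orbits, not $\frak L^+K$-orbits.

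More seriously, transporting from $\frak LX$ is circular. Both introductory abelian statements refer to Theorem~\ref{abelian equ}. Its proof establishes the $U(p,q)$ case first and only then deduces the $\frak LX$ case via Proposition~\ref{real-symmetric}.

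So everything rests on your second paragraph, and the decisive idea is missing there. The heart of $\mathrm{Perf}^{\Sp_{2p}}(\cdots)$ for the transported $t$-structure is equivalent to $\mathrm{Perv}$ by definition. To compute it you must recognize that $t$-structure intrinsically on the spectral side, and a count of generator degrees does not do this. Indeed, with the paper's parity convention the degree-one generators of $\mathcal O(T^*\mathbb A^{2p}[1])$ commute, so no exterior algebra is visible on $\mathrm{Perf}$ itself.

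The missing ingredient is Koszul duality. Let $\mathfrak B$ be the sheared ring of functions on the $\Sp_{2p}$-space in Theorem~\ref{main}. Let $\mathfrak A$ be the algebra obtained by moving each generator from degree $d$ to degree $1-d$, keeping the parity twist. Then $\kappa=\mathbb C\otimes^{\mathbf L}_{\mathfrak B}(-)$ is an equivalence $D^{\Sp_{2p}}_{perf}(\mathfrak B\on{-mod})\simeq D^{\Sp_{2p}}_{fd}(\mathfrak A\on{-mod})$. The paper's argument then runs as follows.
\begin{enumerate}
\item The construction of $\Phi^b$ from Proposition~\ref{generation} and Theorem~\ref{Ext} gives $\Phi^b(\IC^\lambda_\bR)\cong\mathfrak B\otimes V^\lambda$.
\item Under Koszul duality, $\kappa(\mathfrak B\otimes V^\lambda)\cong V^\lambda$, placed in degree $0$.
\item Hence $\kappa\circ\Phi^b$ sends the simple perverse sheaves into the standard heart.
\item Perverse sheaves have finite length and both $t$-structures are bounded, so $\kappa\circ\Phi^b$ is $t$-exact.
\item The heart is therefore finite-dimensional $\Sp_{2p}$-equivariant $H^0(\mathfrak A)$-modules.
\end{enumerate}
For $p=q$ one gets $H^0(\mathfrak A)=\wedge^*(T^*\mathbb A^{2p})$: the degree-one generators move to degree $0$ and, being odd, anticommute. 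For $p<q$ one gets $H^0(\mathfrak A)=\mathbb C$, since all generators land in negative degrees.

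Your remark that degree-$2$ generators do not affect the heart while degree-one generators produce an exterior algebra is exactly this computation of $H^0(\mathfrak A)$. Without $\kappa$ and the identification of the images of the $\IC^\lambda_\bR$, it remains a heuristic rather than a proof.
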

\subsection{Whittaker cotangent bundles
and regular centralizers}

Another key feature of  the linear period of  Friedberg--Jacquet,
and more generally  of $G= GL_{p+q}$ acting on $X = GL_{p+q}/(GL_p \times GL_q)$ is that $K=GL_p \times GL_q$ is  a Levi subgroup of $G$.
The work of the third author \cite{M} 
suggests the alternative  description 
of the dual variety $\check M$ as 
 the affine closure of the Whittaker cotangent bundle 
 \begin{equation}\label{intro: M=Wh}
    \check M\cong  \overline{T^*_{\psi_{\check K}}(\check G/\check U)}
 \end{equation}
 here 
 $T^*_{\psi_{\check K}}(\check G/\check U)$ is  the Hamiltonian reduction of $T^*\check G$ by a maximal unipotent subgroup $\check U =[\check B, \check B] \subset \check G$ at $-d\psi_{\check K}$ for the right action $g\mapsto gu^{-1}$,
where $\psi_{\check K}:\check U\to\mathbb G_a$ is an additive character nontrivial precisely on the simple roots of the dual Levi $\check K =GL_p \times GL_q$
 (see Section~\ref{sect:whit cot} for more details). 

On the other hand, a recent general result of Ginzburg \cite{Gi}
suggests another description of 
the dual variety in terms of regular centerlizers group schemes:
\begin{equation}\label{intro: M=J}
  \check M\cong \overline{\check G\times J_{\check K}/\pi^*J_{\check G}}
\end{equation}
here the right hand side is  the affine closure of the quotient $\check G\times J_{\check K}/\pi^*J_{\check G}$, where $J_{\check G} \to \mathfrak c_{\check G}$, $J_{\check K} \to \mathfrak c_{\check K}$ are the group schemes of regular centralizers, and $\pi:\mathfrak c_{\check K} \to \mathfrak c_{\check G}$ is the natural projection (see Section~\ref{sect: reg cent} for more details).

Indeed, we prove the isomorphisms~\eqref{intro: M=Wh} and ~\eqref{intro: M=J}, which is the key step in our approach to Relative Langlands duality for linear periods:

\begin{theorem}[see Theorem~\ref{affine closures}]\label{intro affine closures}
    There are $\check G\times\mathbb G_m$-equivariant 
isomorphisms of 
affine varieties over $\check\fg^*$
in the following commutative diagram 
\[\xymatrix{\overline{\check G\times J_{\check K}/\pi^*J_{\check G}}\ar[rr]^{\simeq}\ar[dr]_{\simeq}&&\overline{T^*_{\psi_{\check K}}(\check G/\check U)}\ar[ld]^{^{\simeq}}\\
&\check M&}\]
\end{theorem}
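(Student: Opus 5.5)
The plan is to go through the Whittaker side, since it is the most computable. The goal is a $\check G\times\mathbb G_m$-equivariant map
\[\check G\times^{\Sp_{2p}}\bigl(\wedge^2\mathbb A^{2p}\times T^*\mathbb A^{2p}\times\frak{gl}^*_{q-p}\sslash GL_{q-p}\bigr)\to\overline{T^*_{\psi_{\check K}}(\check G/\check U)},\]
which we then show is an isomorphism. The first step is to write down a Kostant-type slice for the Whittaker reduction. Decompose $\check\fg=\frak{gl}_{p+q}$ according to the block decomposition $p+q$. Inside $-d\psi_{\check K}+\check\fb^\perp$, the Whittaker condition fixes the regular nilpotent $f_{\check K}$ of the Levi $\check K=GL_p\times GL_q$. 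Choose a slice $\mathcal S\subset f_{\check K}+\check\fb$ transverse to $\check U$-orbits, in the spirit of Kostant. Then
\[T^*_{\psi_{\check K}}(\check G/\check U)\cong \check G\times \mathcal S \quad\text{and}\quad T^*_{\psi_{\check K}}(\check G/\check U)/\check G\cong \mathcal S.\]
We then compute $\mathcal S$ explicitly in matrix coordinates. Its coordinates should be the characteristic coefficients of the two Levi blocks, which give $\fc_{\check K}$, together with the off-diagonal block entries in the "lowest weight" positions. Tracking the grading via $\lambda$ from \eqref{rho} recovers the shifts $[2]$ and $[q-p+1]$.

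The second step, the regular centralizer side, is more formal. The action map $\check G\times \mathcal S\to T^*_{\psi_{\check K}}$ restricted to the locus over $\fc_{\check K}$ is identified with $\check G\times J_{\check K}/\pi^*J_{\check G}$. The identification works as follows. A point of $J_{\check K}$ over $c\in\fc_{\check K}$ is an element of the centralizer of the Kostant section $\kappa_{\check K}(c)$. The group $J_{\check G}$ acts through $\fc_{\check K}\to\fc_{\check G}$, because the regular centralizer in $\check G$ of the image element contains the Levi one. Generically over $\fc_{\check K}$ both sides are $\check G$-torsors over the same base with matching stabilizers. We check this on the regular semisimple locus, where everything reduces to tori. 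Normality and codimension-two complements then identify the affine closures, i.e. the rings of global functions, by Hartogs.

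The third step compares both with $\check M$, and this is the main obstacle. We must produce an $\Sp_{2p}$-equivariant embedding of the fiber $\wedge^2\mathbb A^{2p}\times T^*\mathbb A^{2p}\times\frak{gl}^*_{q-p}\sslash GL_{q-p}$ into $\check\fg^*$. The plan is to use $\mathbb A^{2p}\oplus\mathbb A^{q-p}=\mathbb A^{p+q}$ with the symplectic form on $\mathbb A^{2p}$. A pair $(A,(v,w))$, with $A\in\wedge^2$ viewed via the form as an element of $\frak{sp}$-complement, is sent to a block matrix whose $\frak{gl}_{q-p}$ corner is the Kostant section of the given invariant. Then $\mu$ is given by this matrix. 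We then check three things. First, the rings of $\Sp_{2p}$-invariants on the fiber are polynomial, which is classical invariant theory for $\wedge^2$ plus vectors. Second, these invariants are identified with $\mathbb C[\mathcal S]^{\check G}$, i.e. with $\mathbb C[\fc_{\check K}]$ extended by the extra invariants. Third, the generic stabilizers match.

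Concretely, we show $\check M$ is normal affine and that the map is an isomorphism off codimension two, then conclude by Hartogs as in step two. The expected difficulty lies in the codimension estimate on the non-tempered range $q>p+1$, where the $\frak{gl}_{q-p}\sslash GL_{q-p}$ factor appears. There the stabilizer comparison must be done by an explicit matrix normal form. The first thing to try is reducing, via the $\frak{gl}_{q-p}$ Kostant section, to the case $q=p+1$, and then doing the direct computation there. Commutativity of the diagram will follow because all three maps are $\check G$-equivariant over $\check\fg^*$ and agree on the dense open regular locus.
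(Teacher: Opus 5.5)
There is a genuine gap, and it starts with your first step. The element $f_{p,q}$ is regular nilpotent in $\check K$ but not in $\check G$, so Kostant's theorem does not apply, and the $\check U$-action on $f_{p,q}+\check\fb$ is \emph{not} free. Already the stabilizer of $f_{p,q}$ in $\check U$ is $p$-dimensional. It contains $\exp\{\begin{pmatrix}0&0\\C&0\end{pmatrix}: C\in\mathbb C[f_p]\}$, placed in the upper-left $2p\times 2p$ block as in the proof of Lemma~\ref{adf injectivity}, and this works for every $p\leq q$.

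Consequently no slice $\mathcal S$ with $\check U\times\mathcal S\cong f_{p,q}+\check\fb$ exists. Also $T^*_{\psi_{\check K}}(\check G/\check U)\not\cong\check G\times\mathcal S$, since the point $[1,f_{p,q}]$ has nontrivial $\check G$-stabilizer $\mathrm{Stab}_{\check U}(f_{p,q})$. What replaces Kostant here is the paper's generalized Kostant theorem (Theorem~\ref{whittaker-main}): $\check U\times^{\check U_X}\Sigma_{p,q}\cong f_{p,q}+\check\fb$ with $\check U_X=\check U\cap\Sp_{2p}$. This gives $T^*_{\psi_{\check K}}(\check G/\check U)\cong\check G\times^{\Sp_{2p}}(\Sp_{2p}\times^{\check U_X}\Sigma_{p,q})$. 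That is exactly where the $\Sp_{2p}$-induced shape of $\check M$ comes from, and it is what lets one write down an actual morphism $j$ into $\check M=\check G\times^{\Sp_{2p}}\mathcal Z_\delta$.

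Your steps two and three replace this construction by matching invariant rings, matching generic stabilizers, agreement over the regular semisimple locus, and then Hartogs. That does not suffice. Hartogs needs an honest morphism that is an open immersion into a normal variety with complement of codimension $\geq 2$. The complement of the regular semisimple locus is a divisor, so agreement over that locus only gives a birational identification and controls no global functions. You never construct the morphism to which Hartogs is applied.

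In the paper, $j$ reduces to $q\times\operatorname{id}_{V^*}\times\operatorname{id}_{\mathcal S_\delta}$ with $q\colon\Sp_{2p}\times^{\check U_X}\mathcal T\to\mathfrak p_{2p}\times V$. The map $q$ has three properties:
\begin{enumerate}
\item It is \'etale, by the Lagrangian correspondence and a dimension count.
\item It is injective, because $k$ is determined modulo $\check U_X$ by the isotropic flag generated by $v,xv,x^2v,\ldots$ (Lemma~\ref{q is open}).
\item Its image is the cyclic locus, whose complement has codimension two by the stratification by $\ell(x)=\dim\langle x^iv\rangle$ (Lemma~\ref{dimension estimate}).
\end{enumerate}
So the essential codimension estimate lives in the $\Sp_{2p}$-factor and is uniform in $q$. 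The $\frak{gl}_{q-p}\sslash GL_{q-p}$ factor simply splits off as a product, so your proposed reduction to $q=p+1$ targets the wrong place.

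Likewise, the identification with $\overline{(\check G\times J_{\check K})/\pi^*J_{\check G}}$ is not formal. The paper obtains it from Gannon's theorem on $\overline{T^*(\check G/\check U_{\check P})}$ followed by Whittaker reduction (Proposition~\ref{Wh=J}). A torus-level comparison over the regular semisimple locus cannot substitute for this, for the same codimension-one reason.
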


In fact,
the isomorphism 
$\overline{T^*_{\psi_{\check K}}(\check{G}/\check{U})}\cong\overline{(\check G\times J_{\check K})/\pi^*J_{\check G}}$ in
Theorem \ref{intro affine closures}
follows from the work in 
 \cite{Ga,M}, which is valid when $\check K$
 is a general Levi subgroups (not necessarily symmetric or spherical), and  the main step 
 in the proof is the isomorphism~\eqref{intro: M=Wh}.
 The proof uses a generalization of Kostant theorem, see Section \ref{generalized Kostant thm}, and a Lagrangian correspondence between $T^*_{\psi_{\check K}}(\check G/ \check U)$ and $\check M$, see  Section \ref{Lag}.

Since $\check M$ is smooth,
as a corollary of the theorem we obtain the following:
\begin{corollary}[see Corollary~\ref{cor: aff clos}]
    The affine closures $\overline{T^*_{\psi_{\check K}}(\check{G}/\check{U})}\cong\overline{(\check G\times J_{\check K})/\pi^*J_{\check G}}$ are smooth.
\end{corollary}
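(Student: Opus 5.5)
The corollary follows from Theorem~\ref{intro affine closures} once we know that $\check M$ itself is smooth. The theorem gives $\check G\times\mathbb G_m$-equivariant isomorphisms of affine varieties
\[\overline{T^*_{\psi_{\check K}}(\check{G}/\check{U})}\cong\overline{(\check G\times J_{\check K})/\pi^*J_{\check G}}\cong\check M,\]
and smoothness is invariant under isomorphism of varieties. So the plan has one substantive step, namely checking smoothness of $\check M$, followed by transport of structure along these isomorphisms.

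For smoothness of $\check M$, we use its associated-bundle presentation
\[\check M = GL_{p+q}\times^{\Sp_{2p}}\bigl(\wedge^2\mathbb A^{2p}\times T^*\mathbb A^{2p}\times\frak{gl}^*_{q-p}\sslash GL_{q-p}\bigr),\]
where the gradings are forgotten, since smoothness only concerns the underlying variety. The fiber $F$ is a product of three factors.
\begin{enumerate}[(i)]
\item The factors $\wedge^2\mathbb A^{2p}$ and $T^*\mathbb A^{2p}$ are affine spaces.
\item By Chevalley's theorem, $\frak{gl}^*_{q-p}\sslash GL_{q-p}\cong\mathbb A^{q-p}$, with coordinates the coefficients of the characteristic polynomial.
\end{enumerate}
Hence $F$ is an affine space, in particular smooth. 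The map $\check M\to GL_{p+q}/\Sp_{2p}$ is a fiber bundle with fiber $F$, locally trivial in the étale topology because $\Sp_{2p}$ is reductive and therefore $GL_{p+q}\to GL_{p+q}/\Sp_{2p}$ is an étale-locally trivial $\Sp_{2p}$-torsor. The base is smooth, so $\check M$ is smooth.

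Alternatively, $\check M$ is the quotient of the smooth variety $GL_{p+q}\times F$ by the free action of $\Sp_{2p}$. This quotient is smooth, since the quotient map is smooth and surjective and smoothness descends along smooth covers. The quotient is also affine, since $GL_{p+q}/\Sp_{2p}$ is affine by Matsushima and $F$ is affine, so it is consistent that $\check M$ is the target of an isomorphism of affine varieties.

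The only delicate point is that $\check M$ must be understood as the variety itself and not a derived or sheared object. The shifts $[2]$, $[q-p+1]$ only record the $\mathbb G_m$-weights and do not affect the underlying scheme. Once this is granted there is no real obstacle, because the isomorphisms of Theorem~\ref{intro affine closures} transport smoothness directly to both affine closures.
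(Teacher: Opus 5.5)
Your proof is correct and is the paper's argument: the paper deduces the corollary by transporting smoothness of $\check M$ along the isomorphisms of Theorem~\ref{affine closures} and Proposition~\ref{Wh=J}. The paper only asserts that $\check M$ is smooth; your check via the vector bundle presentation $\check M\simeq\check G\times^{\check G_X}\check V_X$ of \eqref{vector bundle} (equivalently Proposition~\ref{induced}), whose fiber is an affine space, supplies exactly the missing justification.
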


\begin{remark}
It is proved in \cite[Corollary 3.1.6]{Gi}
that the affine closure 
$\overline{\check G\times J_{\check K}/\pi^*J_{\check G}}$ for general connected reductive subgroup $\check K\subset\check G$
is an irreducible normal variety.
However, the smoothness of the affine closures in setting of Levi symmetric subgroup $\check K=\GL_p\times\GL_q$ was unexpected at least by the authors.
\end{remark}


\subsection{Exotic nilpotent cones}
Finally, this paper also makes contact with the exotic nilpotent cones of~\cite{Ka}.
Recall the dual variety 
$$
\check M/ \check G =  ( \wedge^2\mathbb A^{2p}[2]\times T^*\mathbb A^{2p}[q-p + 1] \times\frak{gl}^*_{q-p}[2]\sslash GL_{q-p}  )/ \Sp_{2p}
$$
The null-cone for the $\Sp_{2p}$ action on 
$ \wedge^2\mathbb A^{2p}[2]\times T^*\mathbb A^{2p}[q-p + 1] $, denoted by \[\mathcal N_{\Sp_{2p}}'\subset \wedge^2\mathbb A^{2p}[2]\times T^*\mathbb A^{2p}[q-p + 1]\] is known as the 
exotic nilpotent cones of~\cite{Ka}. 
Consider the dg category $D^b_{}( \Gr_{U(p,q)})$ of bounded constructible complexes on $\Gr_{U(p,q)}$
and its
full subcategory 
$$D^b_{\{\frak L^+U(p,q)\}}( \Gr_{U(p,q)})\subset D^b_{}( \Gr_{U(p,q)})$$
of bounded  complexes constructible with respect to the 
$\frak L^+U(p,q)$-orbits stratification.
As a corollary of derived Satake for $U(p,q)$,
we obtain the following constructible description coherent sheaves on $\mathcal N_{\Sp_{2p}}'$:
\begin{theorem}(see Theorem \ref{exotic-coherent})
\label{thm: intro main with supp}
  There is an equivalence of dg-categories
$$
D^b_{\{\frak L^+U(p,q)\}}( \Gr_{U(p,q)})
\cong\mathrm{Perf}^{\Sp_{2p}}(\mathcal N_{\Sp_{2p}}')$$
\end{theorem}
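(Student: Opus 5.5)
We deduce the statement from the real derived Satake equivalence of Theorem~\ref{intro main real} by a standard ``de-equivariantization'' argument: forgetting $\frak L^+U(p,q)$-equivariance on the automorphic side should correspond, on the spectral side, to base changing along the augmentation of the invariant polynomials, i.e.\ restricting to the fiber over $0$ of the quotient map. Write $V=\wedge^2\mathbb A^{2p}[2]\times T^*\mathbb A^{2p}[q-p+1]\times\frak{gl}^*_{q-p}[2]\sslash GL_{q-p}$ and $\mathcal O(V)^{\Sp_{2p}}$ for its invariant ring. The first step is to identify
\[
H^*_{\frak L^+U(p,q)}(\mathrm{pt})\cong H^*_{U(p)\times U(q)}(\mathrm{pt})\cong \mathcal O(V)^{\Sp_{2p}}
\]
as graded rings. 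Concretely, the invariants of $\wedge^2$ under $\Sp_{2p}$ give $p$ Pfaffian-type generators in degrees $2,4,\dots,2p$; the pairing of $T^*\mathbb A^{2p}$ with powers of $\wedge^2$ gives further generators in shifted degrees; and $\frak{gl}_{q-p}\sslash GL_{q-p}$ contributes degrees $2,\dots,2(q-p)$. This should match $\mathcal O(\frak c_{\check K})$ in the Chern classes of $U(p)\times U(q)$. We would verify the match by checking that the map from $H^*_{\frak L^+U(p,q)}(\mathrm{pt})$ to endomorphisms of the unit (the constant sheaf on the base point orbit) under Theorem~\ref{intro main real} coincides with the structure map $\mathcal O(V)^{\Sp_{2p}}\to \mathrm{End}(\mathcal O_V)$. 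Both are compatible with the central $\mathcal O(\check\fg^*)^{\check G}$-actions coming from the monoidal compatibility.

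The second step is the formal part. The category $D^b_{\{\frak L^+U(p,q)\}}(\Gr_{U(p,q)})$ is generated by the images of the $\IC$/standard objects under the forgetful functor $\mathrm{For}$. Because orbits are finite-dimensional quotients and the stabilizers are connected, we expect
\[
\Hom(\mathrm{For}\,\mathcal F,\mathrm{For}\,\mathcal G)\cong \Hom(\mathcal F,\mathcal G)\otimes_{H^*_{\frak L^+U(p,q)}(\mathrm{pt})}\bC .
\]
This holds whenever equivariant Hom's are free over equivariant cohomology, which follows from pointwise purity/parity vanishing of stalks of $\IC$'s (\cite{N}, parity property used in Section~\ref{sect:abelian}). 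Hence $D^b_{\{\frak L^+U(p,q)\}}$ is the base change of the equivariant category along $H^*\to\bC$.

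On the spectral side, the base change of $\mathrm{Perf}^{\Sp_{2p}}(V)$ along $\mathcal O(V)^{\Sp_{2p}}\to\bC$ is $\mathrm{Perf}^{\Sp_{2p}}(V\times_{V\sslash\Sp_{2p}}0)$. Here the fiber product is \emph{derived}, and the $\frak{gl}^*_{q-p}\sslash GL_{q-p}$ factor is killed entirely. The main obstacle is showing this derived fiber equals the classical null-cone $\mathcal N'_{\Sp_{2p}}$ of Kato. We would argue that $\mathcal O(\wedge^2\times T^*\mathbb A^{2p})$ is free (in particular flat) over its invariants, so that the invariant generators form a regular sequence. This is Kato's result that the exotic nilcone is a complete intersection of the expected dimension, and it gives reducedness. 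Shearing commutes with this base change since all generators sit in even cohomological degree after shearing; the odd-shift $q-p+1$ case needs care, and we would check the parity convention of Section~\ref{sect:shearing} there. Finally, matching generators (structure sheaf twisted by $\Sp_{2p}$-representations on both sides) gives the equivalence.
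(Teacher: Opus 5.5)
Your outline is sound, but it is organized differently from the paper's proof.

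\textbf{The paper's route.} The paper does not base-change $\Phi^b$. It reruns the reconstruction argument of \cite{CMNO} directly on the non-equivariant category, feeding in Proposition~\ref{generation} and Theorem~\ref{Non-equ Ext}. There the non-equivariant algebra $A'_\bR$ is computed from scratch:
\begin{itemize}
\item the specialization map, with $p^{-1}(t^0)=\Gr_K$, identifies $A'_\bR$ with $R\Gamma(\Gr_K,\iota_K^!\IC_{\mathcal O(\check G)})$;
\item formality comes from a weight argument;
\item equivariant formality gives $H^*(A'_\bR)\cong H^*(A_\bR)\otimes_{H^*_K(\mathrm{pt})}\bC\cong\mathcal O(\check M)\otimes_{\mathcal O(\check M)^{\check G}}\bC$.
\end{itemize}
The identification $H^*_K(\mathrm{pt})\cong\mathcal O(\check M)^{\check G}$ is obtained by taking $\check G$-invariants in Theorem~\ref{Ext}(2). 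This is the same ``endomorphisms of the unit'' step as your first one.

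\textbf{What your route buys and what it costs.} Doing the base change on categories lets you inherit formality from $\Phi^b$, so you need neither the specialization map nor any purity input. In exchange you must supply three things.
\begin{itemize}
\item[(a)] You need $\mathrm{RHom}(\mathrm{For}\,F,\mathrm{For}\,G)\simeq\mathrm{RHom}(F,G)\otimes^{\mathbf L}_{H^*_{K_c}(\mathrm{pt})}\bC$. This holds at the chain level for any connected group, with no freeness hypothesis. So your appeal to purity is unnecessary, and the ``parity'' of Section~\ref{sect:abelian} is a super-vector-space convention, not a stalk-vanishing statement.
\item[(b)] You need $\Phi^b$ to intertwine the $H^*_{K_c}(\mathrm{pt})$- and $\mathcal O(\check M)^{\check G}$-linear structures on all Hom spaces, not only on the unit. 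Compatibility with $\mathcal O(\check\fg^*)^{\check G}$ cannot give this, since $\mathcal O(\fc_{\check G})\to\mathcal O(\fc_{\check K})$ is not surjective.
\item[(c)] You need flatness of $\check M\to\check M\sslash\check G$, so that the derived fiber is classical. This follows from the equidimensionality of $\chi_{p,p}$ cited from \cite{S,Kn2}. The paper instead reads off freeness from equivariant formality.
\end{itemize}
Two of your worries are unfounded. Reducedness is not needed, since $\mathcal N'_{\Sp_{2p}}$ is defined as the scheme-theoretic fiber $\chi_{p,p}^{-1}(0)$. The odd shift is harmless, because all invariants are quadratic in the odd generators and sit in even degrees.

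\textbf{A step both routes need.} Both require that $D^b_{\{\mathfrak L^+U(p,q)\}}(\Gr_{U(p,q)})$ be generated by forgetful images of equivariant objects. You assert this, and the paper uses it via Proposition~\ref{generation}. Connected stabilizers only trivialize equivariant local systems, and orbits need not be simply connected. For $U(1,1)$ and $\lambda\neq0$, the orbit $\Gr^\lambda_\bR$ retracts onto $K_c/M_\bR\cong S^1$. A rank-one local system with nontrivial monodromy on such an orbit, extended by zero, is orbit-constructible but lies outside that subcategory. So the statement should be read with unipotent (for example, constant) cohomology sheaves along orbits.
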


\subsection{Hermitian symmetric spaces}
The symmetric space $X=GL_{p+q}/(GL_p\times GL_q)$ considered in this paper is a particular example of a Hermitian symmetric space. In forthcoming work \cite{CDMN}, we expect to establish Relative Langlands duality for Hermitian symmetric spaces with no roots of type $N$.
Modulo central factors, the Hermitian symmetric spaces under consideration are as follows:
\begin{table}[h]
\begin{tabular}{l|l}
$X=G/K$ & $\text{Rank}$     \\ \hline
$\GL_{2r+m}/(\GL_r \times \GL_{r+m})$ & $r$     \\ \hline
$\on{PSO}_{2n}/\GSO_{2n-2}$ & $2$    \\ \hline
$\SO_{4r}/\GL_{2r}$ & $r$    \\ \hline
$\SO_{4r+2}/\GL_{2r+1}$ & $r$     \\ \hline
$E_6/\GSO_{10}$ & $2$   \\ \hline
$E_7/GE_6$ & $3$   
\end{tabular}

\label{tab: hermitian weights}
\end{table}

A striking feature of the Hermitian symmetric case is that, after forgetting the gradings, all of the corresponding dual varieties $\check M$ are of the form
$$\ld{G} \times^{\Sp(V)} (T^\ast(V) \times \wedge^2(V) \times \mathbf A^{a_X})$$
where $V$ is a vector space of dimension $2r$ and $a_X$ is the
dimension of the remaining affine factor. For the linear periods
considered here, $r=p$ and $a_X=q-p$.

\subsection{Acknowledgements}
T.-H. Chen would like to thank the NCTS-National Center for Theoretical Sciences at Taipei where parts of this work were done.
THC is supported by NSF grant DMS-214372
and Simons Fellowships.
SKD was supported by an NSF postdoc fellowship and a Simons postdoc fellowship (as part of the Simons Collaboration on Perfection in Algebra, Geometry, and Topology).
MM was supported by an NSF postdoc fellowship.
DN is grateful for the support of NSF  grant DMS-2401178.


\section{Constructible side}

Throughout, sheaves have complex coefficients, and $\mathrm{Rep}(H)$
denotes finite-dimensional algebraic representations of a complex
algebraic group $H$. We use $D^b$ for the bounded, finite-rank
constructible category and $\mathrm{Perv}$ for its perverse heart.
The cocomplete categories $D$ and their compact objects are distinguished
below in Section~\ref{sect:satake-categories}.

\subsection{Real groups and symmetric spaces} 
Let $G=GL_N$ be the complex general linear group of rank $N$.
Let $T$ be the diagonal maximal torus and 
$B=TU$ be the Borel subgroup of upper triangular matrices.

Fix integers $1 \leq p \leq q$ and put $N=p+q$.
Consider the following matrix
\[
I_{p,q}  = \begin{pmatrix}
    0 & 0 & J_p\\
    0 & I_{q-p} & 0\\
    J_p & 0 & 0
\end{pmatrix}  
\]
where $I_{q-p}$ is the identity matrix 
of size $q-p$
and $J_p$ is the permutation matrix with $1$'s on the anti diagonal and $0$'s elsewhere. Consider the involution 
$\theta: G\to G$ given by $\theta(g)=I_{p,q} g (I_{p,q})^{-1}$.
The fixed point subgroup 
$K=G^{\theta}$ is a Levi symmetric subgroup of $G$ isomorphic to 
$K\cong GL_p\times GL_q$.
We denote by $X=G/K$ the associated symmetric space.

The block upper triangular subgroup 
\[
P  = \begin{pmatrix}
    B_p^{\mathrm{}} & * & *\\
    0 & GL_{q-p} & *\\
    0 & 0 & B_p^{\mathrm{}}
\end{pmatrix}  
\]
is a $\theta$-split minimal parabolic subgroup, where
$B_p^{\mathrm{}}\subset GL_p$ is the upper triangular Borel. Its Levi subgroup is
\[L=P\cap\theta(P)=\begin{pmatrix}
    GL_1^p & 0 & 0\\
    0 & GL_{q-p} & 0\\
    0 & 0 & GL_1^p
\end{pmatrix} .\]
The subtorus $A=\{t\in T|\theta(t)=t^{-1}\}^\circ\subset T$ is a maximal $\theta$-split torus consisting of  diagonal matrices of the form
\[\text{diag}(a_1,...,a_p,1,...,1,a_p^{-1},...,a_1^{-1})\]
We have
$L=Z_G(A)$
and $P=LU_P=MAU_P$
where $U_P$ is the unipotent radical of $P$
and $M=Z_K(A)$.
We see that $X$ is quasi-split (but non-split)
if and only if $p=q$ or $p+1=q$.

We write $\Lambda_T=X_*(T)$ for the coweight lattice and
$\Lambda_T^+$ for the dominant coweights with respect to $(T,B)$.
Let $2\rho\in X^*(T)$ be the sum of the positive roots. The real
coweight lattice is $\Lambda_A=X_*(A)$, with
$\Lambda_A^+=\Lambda_A\cap\Lambda_T^+$.
Under the coordinate identifications $X_*(T)\cong\bZ^N$ and
$X^*(T)\cong\bZ^N$, we have
\[
 2\rho=(N-1,N-3,\ldots,-N+3,-N+1).
\]
The elements of $\Lambda_A$ are the tuples
\begin{equation}\label{Real Coweights}
 (\lambda_1,\ldots,\lambda_p,0,\ldots,0,-\lambda_p,\ldots,-\lambda_1).
\end{equation}

We have an embedding 
$\tau:X\to G$ sending $x=gK$
to $g\theta(g)^{-1}$ which induces an isomorphism 
$\tau:X\cong Q=\{g\in G|\theta(g)=g^{-1}\}^\circ$ 
between $X$ and the 
neutral component of the $\theta$ anti-fixed points in $G$. 
We denote by  $\iota_A:A\to Q\cong X$ the natural embedding.

\quash{
Let $V = \bC^{N}$ and let $G = GL(V) = GL_{N}(\bC)$. Fix integers $1 \leq p \leq q$ and put $N=p+q$. Let $W_1 = \bC^p \oplus 0 \subseteq V$ and $W_2 = 0 \oplus \bC^q \subseteq V$. We will consider the standard Levi subgroup
$$
K = GL(W_1) \times GL(W_2) = GL_p(\bC) \times GL_q(\bC) \subseteq G.
$$
It is a symmetric subgroup of $G$. Indeed, letting $\xi \in G$ act on $W_1$ (resp. $W_2$) by $+1$ (resp. $-1$), we can exhibit $K$ as the subgroup of fixed points of the involution $\theta = \mathrm{Ad}_{\xi}$. Let $X = G/K$ be the associated (complex) symmetric space. 
}

Consider the conjugation 
$\eta_c:G\to G$ given by 
$\eta_c(g)=(\bar g^t)^{-1}$
with fixed points subgroup 
$G_c=G^{\eta_c}=U(N)$
the compact unitary group.
Consider the conjugation 
$\eta:G\to G$ given by 
$\eta(g)=I_{p,q} (\bar g^{-1})^tI_{p,q}^{-1}$.
The fixed point subgroup $G^{\eta}$ 
is a real form of $G$, denote by $G_\bR=U(p,q)$, known as the 
 unitary group
of signature $(p, q)$.
Consider the intersections
$P_\bR=P\cap G_\bR$, $L_\bR=L\cap G_\bR$, and
$U_{P,\bR}=U_P\cap G_\bR$, etc.
The intersection $P_\bR=P\cap G_\bR$ is the minimal parabolic subgroup with 
Iwasawa decomposition 
$P_\bR=M_\bR A_\bR U_{P,\bR}$. 
The intersection 
$K_c=K\cap G_\bR$ is a maximal compact subgroup isomorphic to the product 
$K_c\cong U(p)\times U(q)$ of unitary groups.

\subsection{Loop spaces and affine Grassmannians} Let $\mathfrak{L}G$ and $\mathfrak{L}K$ denote the loop groups of $G$ and $K$, respectively. We also have the arc subgroups $\mathfrak{L}^+G \subseteq \mathfrak{L}G$ and $\mathfrak{L}^+K \subseteq \mathfrak{L}K$. Let $\Gr_G = \mathfrak{L}G/\mathfrak{L}^+G$ and $\Gr_K = \mathfrak{L}K/\mathfrak{L}^+K$ be the corresponding affine Grassmannians. We abbreviate $\Gr=\Gr_G$. There is a natural closed immersion
$\iota_K:\Gr_K\hookrightarrow\Gr_G$. 
Let $\mathfrak LG_\bR$ be the real loop group and
$\mathfrak L^+G_\bR\subseteq\mathfrak LG_\bR$ its arc subgroup. The quotient $\Gr_\bR = \mathfrak{L}G_\bR/\mathfrak{L}^+G_{\bR}$ is the real affine Grassmannian, a real form of the ind-scheme $\Gr_G$. 
For any $\lambda\in\Lambda_T^+$
(resp. $\lambda\in\Lambda_A^+$)
we denote by $\Gr^\lambda\subset\Gr$ (resp. $\Gr_\bR^\lambda\subset\Gr_\bR$) the $\frak L^+G$-orbit (resp. $\frak L^+G_\bR$) through $t^\lambda$. 
We have $\Gr=\bigsqcup_{\lambda\in\Lambda^+_T}\Gr^\lambda$
and $\Gr_\bR=\bigsqcup_{\lambda\in\Lambda^+_A}\Gr_\bR^\lambda$.

Let $\frak LX$ be the loop space of $X$. 
The embedding $\iota_A:A\to X$
of the $\theta$-split torus 
gives rise to an embedding 
$\Lambda_A\to\frak LX$
sending $\lambda$ to $x^\lambda=\iota_A(t^\lambda)$. Let $\frak L X^\lambda=\frak L^+G\cdot 
x^\lambda$ be the $\frak L^+G$-orbit through $x^\lambda$.
It is known that
$\frak LX=\bigsqcup_{\lambda\in\Lambda_A^+} \frak LX^\lambda$ 
and $\overline{\frak L X^\lambda}=
\bigsqcup_{\mu\in\Lambda_A^+,\mu\leq\lambda} \frak LX^\mu$
(see, e.g., \cite[Proposition 17 and Theorem 18]{CY}).

\subsection{Real and relative Satake categories}\label{sect:satake-categories}
Let $(D_{\frak L^+G}(\Gr),*)$
be the co-complete monoidal
dg category of 
$\frak L^+G$-equivariant 
 constructible complexes on 
$\Gr$
with monoidal structure given by the 
convolution product:
$F,F'\in D_{\frak L^+G}(\Gr)$ and 
$F\star F'=m_!(F\tilde\boxtimes F')$, where 
$m:\frak LG\times^{\frak L^+G}\Gr\to\Gr$ is the natural action map and 
$F\tilde\boxtimes F'\in D_{\frak L^+G}(\frak LG\times^{\frak L^+G}\Gr)$
is the twisted product of $F$ and $F'$.
It is known that $D_{\frak L^+G}(\Gr)$ is compactly generated and we denote by $D_{\frak L^+G}(\Gr)^c$
the subcategory of compact objects.
The equivariant derived category
$D^b_{\frak L^+G}(\Gr)$ coincides with the 
subcategories of 
locally compact objects\footnote{An object 
$F$ is called locally compact if its image under the forgetful functor
$\mathrm{For}:D_{\frak L^+G}(\Gr)\to D(\Gr)$ is compact.} and we denote by $\text{Ind}(D^b_{\frak L^+G}(\Gr))$ the ind-completion of $D^b_{\frak L^+G}(\Gr)$.
We denote by $(D_{\frak L^+G_\bR}(\Gr_\bR),*), $ etc, the corresponding monoidal categories for $\Gr_\bR$.
We will call the categories $D_{\frak L^+G}(\Gr)$, etc
(resp. $D_{\frak L^+G_\bR}(\Gr_\bR)$, etc)
the complex Satake categories for $G$ (resp. real Satake categories for $G_\bR$).

Let $D_{\frak L^+G}(\frak LX)$
be the co-complete 
dg category of 
$\frak L^+G$-equivariant 
 constructible complexes on 
$\frak LX$. Denote by
$D_{\frak L^+G}(\frak LX)^c$, $D_{\frak L^+G}^b(\frak LX)$, 
and $\mathrm{Ind}(D^b_{\frak L^+G}(\frak LX))$ for the corresponding
categories of compact objects, locally compact objects, and the
ind-completion of the latter, respectively. 
We have a natural action of the complex Satake category 
\[*:D_{\frak L^+G}(\Gr)\times 
D_{\frak L^+G}(\frak LX)\to  D_{\frak L^+G}(\frak LX)\]
given by 
$F\star F'=a_!(F\tildeboxtimes F')$ where 
$a:\frak LG\times^{\frak L^+G}\frak LX\to\frak LX$ is the action map and 
$F\tildeboxtimes F'\in D_{\frak L^+G}(\frak LG\times^{\frak L^+G}\frak LX)$
is the twisted product of $F$
and $F'$.
We will call the above category 
$D_{\frak L^+G}(\frak LX)$, etc, the relative Satake categories.

\subsection{Hecke actions}
Let $\Gr^{(2)}\to\bC$ 
be the  Beilinson-Drinfeld Grassmannian
over the complex line 
classifying triples $(x,E,\phi)$
where $x\in\bC$, $E$ is a $G$-bundle on 
$\bC$, and $\phi$ is a trivialization of 
$E$ on $\bC\setminus\{\pm x\}$.
Consider the anti-holomorphic involution $\eta$ on $\Gr^{(2)}$ given by $\eta(x,E,\phi)=(-\bar x,\eta^*E,\eta^*\phi)$ defines a real from 
$\Gr^{(2)}_\bR\to i\bR$ such that
\[\xymatrix{\Gr\times i\bR_{>0}\ar[r]^a\ar[d]&\Gr_\bR^{(2)}\ar[d]&\Gr_\bR\ar[d]\ar[l]_b\\
i\bR_{>0}\ar[r]&i\bR&\{0\}\ar[l]}\]
Consider the specialization functor 
\begin{equation}\label{sp}
    \psi:D_{\frak L^+G}(\Gr)\to D_{\frak L^+G_\bR}(\Gr_\bR)\ \ \ \psi(F)=b^*a_*(F\boxtimes \bC_{i\bR_{>0}}).
\end{equation}

Let $\text{Perv}_{\frak L^+G}(\Gr)\subset D^b_{\frak L^+G}(\Gr)$ be the category of $\frak L^+G$-equivariant perverse sheaves on $\Gr$. The geometric Satake equivalence says that there is an isomorphism of Tannakian category 
\begin{equation}\label{GS}
    \text{Perv}_{\frak L^+G}(\Gr)\cong\text{Rep}(\check G)
\end{equation}
where $\check G=GL_N$ is the dual group of $G$ such that the intersection cohomology complex $\text{IC}^\lambda$ for $\Gr^\lambda$
maps to the irreducible representations 
$V_\lambda$ of highest weight $\lambda$. For
$V\in\mathrm{Rep}(\check G)$, write $\IC_V$ for its corresponding
Satake sheaf.
According to \cite[Lemma 3.9.1]{N},
we have 
\[\dim_\bR(\Gr_\bR^\lambda)=\langle\lambda,2\rho\rangle=2\sum_{i=1}^p(N+1-2i)\lambda_i\in 2\mathbb Z.\] 
Thus there is perverse $t$-structure 
on the real Satake category 
$D^b_{\frak L^+ G_\bR}(\Gr_\bR)$
given by the middle perversity function
\[p_\bR:\Lambda_A^+\to\mathbb Z,\ \ \ 
p_\bR(\lambda)=-\frac{\dim_\bR(\Gr_\bR^\lambda)}{2}=-\sum_{i=1}^p(N+1-2i)\lambda_i\]
We denote by 
$\text{Perv}_{\frak L^+G_\bR}(\Gr_\bR)$
the corresponding category of $\frak L^+G_\bR$-equivariant perverse sheaves on $\Gr_\bR$.

The Hecke action of $\mathrm{Rep}(\check G)$
on the real Satake category
$D_{\frak L^+G_\bR}(\Gr_\bR)$, denoted again by $*$, is defined as 
\[\mathrm{Rep}(\check G)\times D_{\frak L^+G_\bR}(\Gr_\bR)\to D_{\frak L^+G_\bR}(\Gr_\bR)\ \ (V,F)\to V*F:=\psi(\IC_V)*F\]

\begin{proposition}\label{exactness}
 (1)  The specialization functor $\psi$, and hence the Hecke action, 
is monoidal. Moreover, the Hecke action 
is $t$-exact with respect to the perverse $t$-structure if and only if $G_\bR$ is quasi-split, that is,
when $G_\bR=U(p,p)$ or $U(p,p+1)$.

(2) The Hecke action of $\mathrm{Rep}(\check G)$
preserves semisimplicity, that is, it maps semi-simple objects to semi-simple objects.
\end{proposition}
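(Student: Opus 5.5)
We would rely on the nearby-cycles formalism of \cite{N} (and \cite{CN}) rather than recompute it in this setting.

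For monoidality of $\psi$, we use the standard fusion argument. Consider the real form of the Beilinson--Drinfeld convolution family over $i\bR$, built from the convolution Grassmannian $\Gr^{(2)}\tilde\times\Gr^{(2)}$. Over $i\bR_{>0}$ it restricts to $\Gr\tilde\times\Gr\times i\bR_{>0}$, and over $0$ to the real convolution space $\Gr_\bR\tilde\times\Gr_\bR$. The convolution map is proper, so nearby cycles commute with pushforward along it. Nearby cycles also commute with the twisted external product, because the family is locally trivial in the twisting direction. Together these give $\psi(F*F')\cong\psi(F)*\psi(F')$. The compatibility with associativity constraints comes from running the same argument on the threefold convolution family.

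Since $\psi$ is monoidal, $(V\otimes W)*F\cong V*(W*F)$. The Hecke action is therefore a monoidal action once the geometric Satake equivalence \eqref{GS} is in place.

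For the $t$-exactness claim, the key input is the description of $\psi(\IC_V)$ via stalks or Mirković--Vilonen-type weight functors. We compute the real Satake category via real semi-infinite orbits $S_{\nu,\bR}$ for $\nu\in\Lambda_A$, with hyperbolic localization. Then $\psi(\IC_\lambda)$ has weight functors equal to the restrictions of $V_\lambda$ to the $A^\vee$-weight spaces, shifted by the difference between $\langle\nu,2\rho\rangle$ and the real dimension $\langle\nu,2\rho_\bR\rangle$.

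\begin{itemize}
\item If $G_\bR$ is quasi-split, $M_\bR$ is compact abelian, $P$ is a Borel, and $2\rho$ restricted to $A$ agrees with the real $2\rho$ (halved correctly). The shifts vanish, so $\psi$ is $t$-exact. Exactness of convolution with $\psi(\IC_V)$ then follows as in \cite{N}: convolution $*F$ is computed by the semismall real convolution map, using the dimension formula $\dim_\bR\Gr^\lambda_\bR=\langle\lambda,2\rho\rangle$.
\item If $q>p+1$, we exhibit non-exactness explicitly. Take $V$ the standard representation and $F=\IC_0$, the skyscraper at the base point. Then $\psi(\IC_V)$ is supported on $\overline{\Gr^{\omega}_\bR}\cap\Gr_\bR=\{t^0\}\cup\ldots$, and the stalk at $t^0$ is $H^*$ of the real fiber. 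That fiber is the fixed locus of $\eta$ on the minuscule orbit $\mathbb P^{N-1}$, which contains a copy of $\mathbb{CP}^{q-p-1}$ (or a sphere-like piece). Its cohomology occupies degrees $0,2,\ldots,2(q-p-1)$, which is too many degrees to be perverse once $q-p\ge2$.
\end{itemize}

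I expect this non-exactness computation, and getting the perversity shifts exactly right, to be the main obstacle.

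For part (2), we use the decomposition theorem. The objects $\IC_V\tilde\boxtimes F$ with $F$ semisimple are pure, for instance by passing to mixed $\ell$-adic or Hodge versions following \cite{N}, where the real orbits have pure cohomology because they are parity. Proper pushforward and nearby cycles of $\eta$-real families preserve purity in this setting. Alternatively, we can argue via parity sheaves: $\psi(\IC_V)$ and the real IC sheaves are even, and convolution preserves evenness. A pure or even complex with these vanishing properties on the $\frak L^+G_\bR$-equivariant category is a direct sum of shifted simple objects, giving the claim.
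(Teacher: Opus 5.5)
The paper does not reprove this proposition: (1) is quoted from \cite[Theorem 1.2.2 and Corollary 10.6.2]{N}, and (2) from \cite[Corollary 1.5]{CN}. Deferring monoidality and quasi-split exactness to \cite{N} is therefore consistent with the paper. However, the two arguments you supply yourself have genuine problems.

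\emph{The exactness dichotomy.} The shift you invoke cannot separate the cases.
\begin{itemize}
\item If $2\rho_\bR$ is the sum of positive restricted roots with multiplicities, it equals $2\rho$ restricted to $A$ for every real form. So your shift is identically zero, which is just the formula $\dim_\bR\Gr^\lambda_\bR=\langle\lambda,2\rho\rangle$ valid for all $p,q$.
\item Without multiplicities, the shift is already nonzero for the quasi-split group $U(p,p+1)$.
\end{itemize}
What actually distinguishes the cases is that $Z_G(A)$ contains $GL_{q-p}$.

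Your test object $V*\IC^0_\bR=\psi(\IC_V)$, with $V$ the standard representation, is the right one, but its stalk at $t^0$ is misidentified. It is not the cohomology of an $\eta$-fixed locus in $\Gr^{\omega_1}\cong\mathbb P^{N-1}$; $\eta$ does not even preserve $\Gr^{\omega_1}$. The $\mathbb P^{q-p-1}$ that does occur is the weight-zero component of $(\mathbb P^{N-1})^A$, which is relevant to weight functors, not stalks. Also, ``too many degrees'' is not a perversity criterion. At the point stratum one needs stalk degrees $\leq p_\bR(0)=0$, and in the quasi-split cases the stalk is spread over many degrees too.

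To fix the computation, let $s\colon\Gr\to\Gr_\bR$ be the specialization map from the proof of Theorem~\ref{Non-equ Ext}, so that $\psi\cong s_*$ and $s^{-1}(t^0)=\Gr_K$. Let $E_\pm$ be the $(\pm1)$-eigenspaces of $I_{p,q}$, of dimensions $q$ and $p$. By proper base change the stalk is $H^*(\Gr_K\cap\Gr^{\omega_1},\bC)[N-1]$, and $\Gr_K\cap\Gr^{\omega_1}=\mathbb P(E_+)\sqcup\mathbb P(E_-)\cong\mathbb P^{q-1}\sqcup\mathbb P^{p-1}$. Its top degree is $-(p+q-1)+2(q-1)=q-p-1$. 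This is positive exactly when $q\geq p+2$; it equals $0$ for $q=p+1$ and $-1$ for $q=p$.

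For the ``if'' direction, note that $t$-exactness of $\psi$ only gives perversity of $V*\IC^0_\bR$. Exactness of $V*(-)$ on all perverse objects is the real content of \cite{N}. It does not follow from a semismallness of the real convolution map, which you have not established.

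\emph{Semisimplicity.} $\Gr_\bR$ is a real semi-analytic space, and $\psi$ is nearby cycles over the real ray $i\bR_{>0}$. There are no mixed $\ell$-adic or Hodge structures on that side, so the claim that nearby cycles of $\eta$-real families preserve purity has no content. The decomposition theorem is likewise unavailable on $\Gr_\bR$.

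The parity alternative would need two inputs: that every $\IC^\lambda_\bR$ is parity, and that $\psi(\IC_V)*(-)$ preserves parity complexes. You justify neither, and the second is essentially the statement to be proved.

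The missing idea is to pass to the algebraic symmetric side. Under Proposition~\ref{real-symmetric}, which intertwines the Hecke actions, $V*F$ corresponds to $a_!(\IC_V\tildeboxtimes F)$ on $\mathfrak LX$. There the action map $a$ is algebraic and ind-proper (it is a projection from $\Gr\times\mathfrak LX$), and $\IC_V\tildeboxtimes\IC^\lambda_X$ is pure. The decomposition theorem, applied after reducing to finite-dimensional pieces, then gives semisimplicity. This is why the paper takes (2) from \cite{CN}, where the real–symmetric correspondence is available, rather than from \cite{N}.
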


\begin{proof}
Part (1) is \cite[Theorem 1.2.2 and Corollary 10.6.2]{N}
and Part (2)
is \cite[Corollary 1.5]{CN}.
\end{proof}

We shall show that the real Satake category 
$D_{\mathfrak{L}^+G_\bR}(\Gr_\bR)$ is generated by 
 $\IC^0_\bR$ under the Hecke action.

\begin{lemma}\label{real coweights}
     Any real dominant coweight $\lambda\in\Lambda_A^+$ is of the form 
    $\lambda=\mu-\theta(\mu)$ for some $\mu\in\Lambda_T$
\end{lemma}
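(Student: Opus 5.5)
Since $\theta(g)=I_{p,q}gI_{p,q}^{-1}$ and $I_{p,q}$ is a permutation matrix, $\theta$ preserves the diagonal torus $T$. On $\Lambda_T=X_*(T)\cong\bZ^N$ it acts by the induced coordinate permutation. Conjugation by $I_{p,q}$ swaps the $i$-th and $(N+1-i)$-th coordinates for $1\le i\le p$, and fixes the middle $q-p$ coordinates indexed $p+1,\dots,q$. The plan is to compute $\mu-\theta(\mu)$ explicitly in coordinates and compare with the description \eqref{Real Coweights} of $\Lambda_A$. Dominance plays no role: we show the stronger claim that every element of $\Lambda_A$ has this form.

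Concretely, I will write $\mu=(\mu_1,\dots,\mu_N)\in\bZ^N$. Then
\[
\theta(\mu)=(\mu_N,\mu_{N-1},\dots,\mu_{N+1-p},\mu_{p+1},\dots,\mu_q,\mu_p,\dots,\mu_1),
\]
so
\[
\mu-\theta(\mu)=(\mu_1-\mu_N,\dots,\mu_p-\mu_{N+1-p},0,\dots,0,\mu_{N+1-p}-\mu_p,\dots,\mu_N-\mu_1).
\]
The $i$-th and $(N+1-i)$-th entries are negatives of each other and the middle entries vanish, so this is exactly of the shape \eqref{Real Coweights}.

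Given $\lambda=(\lambda_1,\dots,\lambda_p,0,\dots,0,-\lambda_p,\dots,-\lambda_1)\in\Lambda_A^+$, I will take $\mu=(\lambda_1,\dots,\lambda_p,0,\dots,0)$, that is, $\mu_i=\lambda_i$ for $i\le p$ and $\mu_i=0$ otherwise. The formula above gives $\mu-\theta(\mu)=\lambda$. It also gives $\lambda=\mu'+\theta(\mu')$ type variants, but these are not needed.

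There is no real obstacle here. The only point needing care is to confirm the identification $\Lambda_A=\{(\lambda_1,\dots,\lambda_p,0,\dots,0,-\lambda_p,\dots,-\lambda_1)\}$ with the correct index pairing. This follows from the description of $A$ as the diagonal matrices $\mathrm{diag}(a_1,\dots,a_p,1,\dots,1,a_p^{-1},\dots,a_1^{-1})$, and it matches the anti-diagonal placement of the blocks $J_p$ in $I_{p,q}$.
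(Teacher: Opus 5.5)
Your argument is correct and is exactly the paper's proof: $\theta$ acts on $\Lambda_T\cong\bZ^N$ by swapping coordinates $i\leftrightarrow N+1-i$ for $i\le p$ and fixing the middle ones, and $\mu=(\lambda_1,\dots,\lambda_p,0,\dots,0)$ gives $\mu-\theta(\mu)=\lambda$. One small correction to your aside: elements of the form $\mu'+\theta(\mu')$ are $\theta$-fixed, so they cannot equal a nonzero $\lambda\in\Lambda_A$ (which satisfies $\theta(\lambda)=-\lambda$), and that remark should be dropped.
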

\begin{proof}
The involution $\theta$ 
sends a coweight $\mu=(a_1,...,a_{p},b_{1},...,b_{q-p},c_{1},...c_p)\in\Lambda_T\cong \mathbb Z^N$ to
\[\theta(\mu)=(c_p,...,c_1,b_1,...,b_{q-p},a_p,...,a_1
).\]
It follows that 
for any real dominant coweight $\lambda=(\lambda_1,...\lambda_p,0,...,0,-\lambda_p,...,-\lambda_1)\in\Lambda_A^+$,  we have 
$\lambda=\mu-\theta(\mu)$ where 
$\mu=(\lambda_1,...,\lambda_p,0,...,0)\in\Lambda_T$.
The lemma follows.
\end{proof}

\begin{lemma}\label{connectedness}
The $\mathfrak L^+G_\bR$-stabilizers on $\Gr_\bR$ is connected.
\end{lemma}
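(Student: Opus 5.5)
Since every point of $\Gr_\bR$ lies in some orbit $\Gr_\bR^\lambda$ with $\lambda\in\Lambda_A^+$, and stabilizers of points in one orbit are conjugate, it suffices to show that $\mathrm{Stab}_\lambda:=\mathfrak L^+G_\bR\cap t^\lambda\mathfrak L^+G_\bR t^{-\lambda}$ is connected for each $\lambda\in\Lambda_A^+$. Here $t^\lambda\in\mathfrak L A_\bR\subset\mathfrak LG_\bR$, because $\lambda$ is a cocharacter of the $\eta$-stable split torus $A$. The plan is to write $\mathrm{Stab}_\lambda$ as a semidirect product of a pro-unipotent part and a finite-dimensional reductive quotient, and then check that the quotient is connected.

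\textbf{Step 1: reduction map.} Evaluation at $t=0$ gives a surjection $\mathrm{ev}:\mathfrak L^+G_\bR\to G_\bR$. Its kernel $\mathfrak L^{>0}G_\bR$ is pro-unipotent, hence contractible. Restricting $\mathrm{ev}$ to $\mathrm{Stab}_\lambda$, the image is the real points of the parabolic $P_\lambda\subset G$ attached to $\lambda$, namely the subgroup generated by $T$ and the root subgroups $U_\alpha$ with $\langle\alpha,\lambda\rangle\le 0$. This image is $P_{\lambda,\bR}=P_\lambda\cap G_\bR$. The kernel $\mathrm{Stab}_\lambda\cap\mathfrak L^{>0}G_\bR$ is again pro-unipotent, being the real points of a congruence-type subgroup of the complex stabilizer. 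As in the complex case, $\mathrm{Stab}_\lambda$ therefore retracts onto $P_{\lambda,\bR}$, and more precisely onto its Levi $L_{\lambda,\bR}=Z_{G}(\lambda)\cap G_\bR$. The unipotent radical of $P_\lambda$ is $\eta$-stable, because $\lambda$ is $\eta$-fixed up to the appropriate sign, so its real points are contractible.

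\textbf{Step 2: connectedness of the centralizer.} It remains to show $Z_{G_\bR}(\lambda)$ is connected for $\lambda=(\lambda_1,\dots,\lambda_p,0,\dots,0,-\lambda_p,\dots,-\lambda_1)$. Group the entries by their distinct values $c_1>\dots>c_k>0$. With respect to the form $I_{p,q}$, the eigenspace of $\lambda$ for the value $c_j$ pairs with the eigenspace for $-c_j$, and the pairing restricts to a split Hermitian form on their sum. Hence
\[Z_{G_\bR}(\lambda)\cong\prod_j GL_{m_j}(\bC)\times U(p',q'),\]
where the factor $GL_{m_j}(\bC)$ is $GL_{m_j}$ acting on the $c_j$-eigenspace, with the action on the $(-c_j)$-eigenspace determined by it. The factor $U(p',q')$ is the unitary group of the $0$-eigenspace, with $p'=p-\sum m_j$ and $q'=q-\sum m_j$. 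All these factors are connected: complex $GL_m$ obviously, and $U(p',q')$ because it retracts onto its maximal compact $U(p')\times U(q')$. This is the one genuinely real-group input; for $O(p,q)$, for example, the analogous statement fails.

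\textbf{Main obstacle.} The care needed is in Step 1: one must verify that $\mathrm{ev}(\mathrm{Stab}_\lambda)$ is exactly the real points of $P_\lambda$ and not a smaller subgroup. I would prove this by writing elements of $\mathrm{Stab}_\lambda$ via a real Iwahori-type factorization compatible with $\eta$. Alternatively, one can view $\mathrm{Stab}_\lambda$ as the $\eta$-fixed points of the complex stabilizer $\mathfrak L^+G\cap t^\lambda\mathfrak L^+Gt^{-\lambda}$. This complex stabilizer is the semidirect product of $P_\lambda$ with a pro-unipotent group, and taking fixed points of an involution preserving such a decomposition keeps the pro-unipotent part contractible. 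The argument then reduces to Step 2.
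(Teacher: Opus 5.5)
Your proposal is correct and follows essentially the same route as the paper. The paper invokes \cite[Proposition 3.6.1]{N}, which says each orbit $\Gr_\bR^\lambda$ is a $G_\bR$-equivariant vector bundle over $G_\bR/P_{\bR,\lambda}$, to reduce to connectedness of the real parabolic and then of its Levi $U(p-a,q-a)\times\prod_i GL_{n_i}(\bC)$; this is exactly your Step 2, and your Step 1 proves that structural input directly via evaluation at $t=0$ (the obstacle you flag is immediate, since the constant loops in $P_\lambda\cap G_\bR$ already lie in $\mathrm{Stab}_\lambda$ and split the evaluation map $\eta$-equivariantly).
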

\begin{proof}
According to \cite[Proposition 3.6.1]{N}, each
$\mathfrak L^+G_\bR$-orbit $\Gr_\bR^\lambda$ is a
$G_\bR$-equivariant vector bundle over a real flag manifold
$G_\bR/P_{\bR,\lambda}$, where $P_{\bR,\lambda}$ is a real parabolic
subgroup depending on $\lambda$. Thus it suffices to check that
$P_{\bR,\lambda}$ is connected. Since a real parabolic is connected
if and only if its Levi subgroup is connected, we reduce to the latter
assertion. The Levi subgroups of $G_\bR$ are isomorphic to
\[
 U(p-a,q-a)\times\prod_i GL_{n_i}(\mathbb C),
 \qquad a=\sum_i n_i\leq p,
\]
which is connected.
\end{proof}

\begin{proposition}\label{generation}
(1)
The
irreducible objects of 
$\mathrm{Perv}_{\frak L^+G_\bR}(\Gr_\bR)$
are given by the the intersection cohomology complex $\IC_\bR^\lambda$ for 
$\Gr_\bR^\lambda$, $\lambda\in\Lambda_A^+$.

(2)
 The real Satake category 
$D
_{\mathfrak{L}^+G_\bR}(\Gr_\bR)$ is generated by 
 $\IC_\bR^0$ under the Hecke action of $\mathrm{Rep}(\check G)$
\end{proposition}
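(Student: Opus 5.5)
For part (1), the plan is to use the standard classification of irreducible equivariant perverse sheaves on a stratified space. Each irreducible object of $\mathrm{Perv}_{\frak L^+G_\bR}(\Gr_\bR)$ is the intermediate extension of an irreducible $\frak L^+G_\bR$-equivariant local system on a single orbit $\Gr_\bR^\lambda$, shifted according to the perversity $p_\bR(\lambda)$. Such local systems correspond to representations of the component group of the stabilizer. By Lemma~\ref{connectedness} these stabilizers are connected, so the only irreducible equivariant local system on $\Gr_\bR^\lambda$ is the constant one. Hence the irreducibles are exactly the $\IC_\bR^\lambda$, $\lambda\in\Lambda_A^+$, and these are pairwise distinct because their supports differ.

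For part (2), let $\mathcal C\subset D_{\frak L^+G_\bR}(\Gr_\bR)$ be the smallest full cocomplete (or, at the compact level, idempotent complete) triangulated subcategory containing all $V*\IC^0_\bR$, $V\in\mathrm{Rep}(\check G)$. Objects supported on a single closed orbit closure are built from the orbit extensions $j_{\lambda!}$ of equivariant local systems, and these are constant by Lemma~\ref{connectedness}. A dévissage along the closure order $\overline{\Gr_\bR^\lambda}=\bigsqcup_{\mu\le\lambda}\Gr^\mu_\bR$ therefore shows that the category is generated by the $\IC_\bR^\lambda$. So it suffices to show $\IC^\lambda_\bR\in\mathcal C$ for every $\lambda\in\Lambda^+_A$, and I will do this by induction on $\lambda$ in this order.

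For the inductive step, use Lemma~\ref{real coweights} to write $\lambda=\mu-\theta(\mu)$, and let $\mu^+$ be the dominant representative of $\mu$. Consider $V_{\mu^+}*\IC^0_\bR=\psi(\IC^{\mu^+})$. By the definition \eqref{sp}, this is the nearby cycles of $\IC^{\mu^+}$ along the family $\Gr\times i\bR_{>0}\to\Gr^{(2)}_\bR$. Over $x\neq0$ the real fiber is $\Gr$ itself, realized as $\{(E,\phi)\}$ with modifications at $x$ and at $-\bar x$ forced to be $\eta$-conjugate. As $x\to 0$, a modification of type $\mu$ at $x$ and $-\theta(\mu)$ at $-\bar x$ degenerates to the real orbit of type $\lambda$. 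I expect this to be the main obstacle: showing that the support of $\psi(\IC^{\mu^+})$ equals $\overline{\Gr_\bR^\lambda}$, and that the restriction to the open orbit $\Gr^\lambda_\bR$ is a nonzero constant sheaf up to shift. For the second point I would rather use the known description \cite{N} of $\psi$ on standard objects, or compute the stalk at $t^\lambda$ via hyperbolic localization (constant term along $P_\bR$, reducing to the torus $A$ and the weight $\mu$ appearing in $V_{\mu^+}$ with $\mu-\theta\mu=\lambda$).

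Granting this, $\psi(\IC^{\mu^+})$ is supported on $\overline{\Gr^\lambda_\bR}$ with constant nonzero restriction to the top stratum. By Proposition~\ref{exactness}(2) it is semisimple, so after taking perverse cohomology it is a direct sum of shifts of $\IC^\nu_\bR$. The argument needs semisimplicity of the complex, i.e.\ decomposition into shifted simple perverse sheaves, which is what that proposition provides. Since only $\nu\le\lambda$ occur and $\IC^\lambda_\bR$ occurs with nonzero multiplicity, $\IC^\lambda_\bR$ is a direct summand of an object of $\mathcal C$. As $\mathcal C$ is idempotent complete, $\IC^\lambda_\bR\in\mathcal C$, which completes the induction and proves (2).
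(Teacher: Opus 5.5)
Your proposal is correct and follows essentially the paper's route: connectedness of stabilizers (Lemma~\ref{connectedness}) gives (1) and reduces (2) to the $\IC^\lambda_\bR$. Then Lemma~\ref{real coweights}, together with Nadler's nearby-cycles computation \cite[Corollary 10.6.2]{N}, shows that each $\IC^\lambda_\bR$ is, up to shift, a direct summand of some $V\star\IC^0_\bR$; the paper simply cites \cite{N} for the step you flag as the main obstacle, and your appeal to the semisimplicity in Proposition~\ref{exactness}(2) only makes the passage from ``appears in'' to ``is a direct summand of'' explicit (the induction on $\lambda$ is not actually needed, since the summand argument works for each $\lambda$ directly).
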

    \begin{proof}
    Lemma \ref{connectedness} implies that  $D_{\mathfrak{L}^+G_\bR}(\Gr_\bR)$ is generated by $\mathrm{IC}$-complexes $\IC_\bR^\lambda$ and Lemma \ref{real coweights} together with \cite[Corollary 10.6.2]{N} implies, up to shifts, all the $\mathrm{IC}$-complexes appear as a direct summand of $V\star \IC_\bR^0$ for some $V\in\mathrm{Rep}(\check G)$. The corollary follows.
\end{proof}

It is shown in \cite[Theorem 3.6]{CY} that 
the collection of spherical orbits $\{\frak LX^\lambda\}_{\lambda\in\Lambda_A^+}$
provides a 
placid stratification of $\frak LX$ (see \cite[Appendix A]{CY}
for a review  of placidness).
Thus by \cite[Proposition 8.12.]{CN}, there is a unique perverse $t$-structure on 
on $D_{\frak L^+G}(\frak LX)$ associated to the 
perversity function
\[p_X:\Lambda_A^+\to\mathbb Z,\ \ p_X(\lambda)=-\langle\lambda,\rho\rangle\]
characterized by the property that 
\[^{p}D^{\geq0}_{\frak L^+ G}(\frak LX)=\{F\in D^{}_{\frak L^+ G}(\frak LX)| i^!_\lambda(F)\in {^{p}D}^{\geq0}_{\frak L^+ G}(\frak LX^\lambda)[p_X(\lambda)]\}\]
here $i_\lambda:\frak LX^\lambda\to\frak LX$ is the embedding and 
${^{p}D}^{\geq0}_{\frak L^+ G}(\frak LX^\lambda)$
is the positive part of the 
$!$-adapted perverse $t$-structure 
on the placid scheme $\frak LX^\lambda$.
We denote by 
$\mathrm{Perv}_{\frak L^+G}(\frak LX)$ the corresponding perverse heart
on $D^b_{\frak L^+G}(\frak LX)$.
For any $\lambda\in\Lambda_A^+$, we denote by 
$\IC_X^\lambda$ the $\IC$-complex for the orbit 
$\frak LX^\lambda$.
We have $i^!_\lambda(\IC_X^\lambda)\cong\omega_{\frak LX^\lambda}[p_X(\lambda)]$, where $\omega_{\frak LX^\lambda}$ is the dualizing complex of $\frak LX^\lambda$.

Recall the following real-symmetric equivalence \cite[Theorem 1.1]{CN}:
\begin{proposition}\label{real-symmetric}
    There is a natural equivalence 
    \[D_{\frak L^+G}(\frak LX)\cong D_{\frak L^+G_\bR}(\Gr_\bR)\]
    which is exact with respect to the perverse $t$-structure and is compatible with the monoidal action of the 
    complex Satake category 
    $D_{\frak L^+G}(\Gr)$.
    In particular, the above equivalence gives rise to an equivalence between perverse sheaves
    \[\mathrm{Perv}_{\frak L^+G}(\frak LX)\cong\mathrm{Perv}_{\frak L^+G_\bR}(\Gr_\bR).\]
\end{proposition}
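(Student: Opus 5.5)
This proposition is cited from \cite[Theorem 1.1]{CN}, so the plan is to recall how that equivalence is built and check that its hypotheses hold for $G=GL_{p+q}$, $K=GL_p\times GL_q$, $G_\bR=U(p,q)$. The construction has two stages. First, by a Matsuki-type correspondence at the level of loop spaces, one compares $\frak L^+G$-orbits on $\frak LX$ with $\frak LK_c$-orbits on $\Gr$ via the intermediate space $\Gr$ with its $\frak LK$ action. Concretely, $D_{\frak L^+G}(\frak LX)\cong D_{\frak L K}(\Gr)$, since $\frak LX=\frak LG/\frak LK$ and both categories are equivariant sheaves on $\frak LK\backslash\frak LG/\frak L^+G$. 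Second, one identifies $D_{\frak LK}(\Gr)$ with $D_{\frak L^+G_\bR}(\Gr_\bR)$ using the real-symmetric correspondence. This correspondence uses the Morse flow of the energy functional on $\Gr$ for the conjugation $\eta$ (or of the uniformization by $\Omega U(N)$). It produces a $K_c$-equivariant deformation retraction relating the loop-group orbits of $\frak LK$ and $\frak L^+G_\bR$, and both orbit sets are indexed by $\Lambda_A^+$. These are the spherical orbits $\frak LX^\lambda$ and the real orbits $\Gr_\bR^\lambda$ from the preceding subsection.

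The key steps, in order, are as follows.

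(i) Identify both sides with sheaves on the quotient stack and match the stratifications by $\Lambda_A^+$. This uses the orbit decompositions recalled above, from \cite{CY} for $\frak LX$ and \cite{N} for $\Gr_\bR$.

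(ii) Build the comparison functor. One uses the Beilinson--Drinfeld degeneration already used for $\psi$: a family over $i\bR_{\ge0}$ whose generic fiber carries the $\frak LK$-structure (two points $\pm x$ exchanged by $\eta$) and whose special fiber is $\Gr_\bR$. Nearby cycles along this family give the functor.

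(iii) Show the functor is fully faithful and essentially surjective. To do this, check on standard and costandard objects of each stratum that it matches stalks and costalks. Equivariant cohomology of the stabilizers must agree: the $\frak L^+G$-stabilizer of $x^\lambda$ and the $\frak L^+G_\bR$-stabilizer of $t^\lambda$ should be homotopy equivalent, since both retract to compact forms of the same reductive group. This matching is where connectedness (Lemma~\ref{connectedness}) enters.

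(iv) Compatibility with the Hecke action. Convolution commutes with the nearby-cycles construction because the BD family is factorizable, so $\IC_V$ acting at the generic fiber specializes to $\psi(\IC_V)$ acting at the special fiber.

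(v) Perverse exactness. This amounts to matching the perversity functions. The real side has $p_\bR(\lambda)=-\tfrac12\dim_\bR\Gr_\bR^\lambda=-\langle\lambda,\rho\rangle$ by the dimension formula from \cite[Lemma 3.9.1]{N}, and the symmetric side has $p_X(\lambda)=-\langle\lambda,\rho\rangle$. The characterization of $^pD^{\ge0}$ by $!$-restrictions to strata then transports across the equivalence, which preserves $i_\lambda^!$ up to the dualizing-complex normalization, so the hearts are identified.

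The main obstacle is step (iii). One must compare sheaves on an infinite-dimensional placid ind-scheme with sheaves on a real ind-variety and prove that the hom-spaces agree, not merely that the strata correspond. I would try to first reduce to finite-dimensional pieces via placid truncation, then apply the finite-dimensional Mirković--Uzawa--Vilonen/Matsuki sheaf correspondence stratum by stratum, with a gluing argument along closure order.
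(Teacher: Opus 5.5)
Your opening move is the paper's entire argument. Proposition~\ref{real-symmetric} is not proved in the paper; it is quoted from \cite[Theorem 1.1]{CN} and applied to $G_\bR=U(p,q)$, $K=GL_p\times GL_q$. Insofar as your proposal rests on that citation, it agrees with the paper. The reconstruction you add, however, would not stand as a proof, and it misidentifies the mechanism at the crucial step.

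\textbf{Step (ii) builds the wrong functor.} The family $\Gr^{(2)}_\bR\to i\bR$ is exactly the one used to define $\psi$ in \eqref{sp}. Over $x\in i\bR_{>0}$ the two points $\pm x$ are exchanged by the real structure. So the real forms of the loop and arc groups on the generic fiber are the complex groups $\mathfrak{L}G$ and $\mathfrak{L}^+G$, and no $\mathfrak{L}K$ appears anywhere in this family. Nearby cycles along it is therefore $\psi\colon D_{\mathfrak{L}^+G}(\Gr)\to D_{\mathfrak{L}^+G_\bR}(\Gr_\bR)$. Its source is not $D_{\mathfrak{L}K}(\Gr)\simeq D_{\mathfrak{L}^+G}(\mathfrak{L}X)$, and it is far from an equivalence. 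Indeed, $\psi(\IC^0)=\IC^0_\bR$, so on endomorphisms it gives a map $H^*_G(\mathrm{pt})\to H^*_K(\mathrm{pt})$. This map cannot be surjective: in degree $2$ the source is one-dimensional and the target two-dimensional. This family produces the Hecke action, not the comparison functor.

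\textbf{The Matsuki correspondence is misapplied.} The energy-flow (affine Matsuki) correspondence on $\Gr\simeq\Omega U(N)$ matches $\mathfrak{L}K$-orbits with orbits on $\Gr$ of the group of real loops $S^1\to G_\bR$. It does not match them with $\mathfrak{L}^+G_\bR$-orbits on the different space $\Gr_\bR$.

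\textbf{Step (iii) leaves the real content unaddressed.} Suppose you had a global functor that commutes with $i_{\lambda!}$ and $i_{\lambda*}$ and restricts to equivalences on strata. Then your d\'evissage over $\Lambda_A^+$ would give full faithfulness; your stabilizer comparison is right, since both stabilizers retract to $Z_{K_c}(\lambda)$. The perversity match $p_\bR=p_X=-\langle\lambda,\rho\rangle$ would then give $t$-exactness. But producing that functor is the whole content of \cite{CN}. Matching strata, stabilizers and perversities, or ``gluing along closure order,'' does not determine the glued categories. The finite-dimensional Matsuki/MUV correspondence concerns $K$- versus $G_\bR$-orbits on flag varieties, so it does not supply the needed geometric input either.

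Finally, Lemma~\ref{connectedness} is not an input to the equivalence. The paper uses it only to identify the simple perverse sheaves in Proposition~\ref{generation}.
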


Consider the Hecke action of 
$\mathrm{Rep}(\check G)$
on $D
_{\mathfrak{L}^+G}(\frak LX)$ given by 
\[\mathrm{Rep}(\check G)\times D
_{\mathfrak{L}^+G}(\frak LX)\to D
_{\mathfrak{L}^+G}(\frak LX),\ \ \  V*F=\IC_V*F\]
The following follows directly from Proposition \ref{exactness}, Proposition \ref{generation} and  Proposition \ref{real-symmetric}:

\begin{proposition}
    
(1)
The
irreducible objects of 
$\mathrm{Perv}_{\frak L^+G}(\frak LX)$
are given by the intersection cohomology complex $\IC_X^\lambda$, $\lambda\in\Lambda_A^+$.

(2) The Hecke action on $D
_{\mathfrak{L}^+G}(\frak LX)$ is $t$-exact if and only if $X$ is quasi-split, that is, if $X\cong GL_{2p}/(GL_p\times GL_p)$
or $GL_{2p+1}/(GL_p\times GL_{p+1})$.

(3)
 The relative Satake category 
$D
_{\mathfrak{L}^+G}(\frak LX)$ is generated by 
 $\IC_X^0\cong\omega_{\frak L^+X}$ under the Hecke action of $\mathrm{Rep}(\check G)$.
\end{proposition}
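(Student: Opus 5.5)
The plan is to transport each assertion across the real-symmetric equivalence of Proposition~\ref{real-symmetric}, which we denote $\Phi: D_{\frak L^+G}(\frak LX)\cong D_{\frak L^+G_\bR}(\Gr_\bR)$. Three features of $\Phi$ do the work. It is $t$-exact for the perverse $t$-structures given by $p_X$ and $p_\bR$. It is compatible with the action of $D_{\frak L^+G}(\Gr)$. And it matches the strata $\frak LX^\lambda\leftrightarrow \Gr_\bR^\lambda$ for $\lambda\in\Lambda_A^+$. For the last point we would check from the construction in \cite{CN} that $\Phi$ sends $!$-extensions and $*$-extensions of constant objects on $\frak LX^\lambda$ to the corresponding extensions on $\Gr_\bR^\lambda$. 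Both stratifications are indexed by $\Lambda_A^+$ with the same closure order, so this is the expected behavior.

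For (1), $t$-exactness means $\Phi$ restricts to an equivalence of perverse hearts, so it bijects simple objects. By Proposition~\ref{generation}(1), the simples on the real side are exactly the $\IC_\bR^\lambda$. Since $\Phi$ preserves intermediate extensions and the strata correspond, $\Phi^{-1}(\IC_\bR^\lambda)\cong \IC_X^\lambda$. This gives (1). For (3) we would first identify $\Phi(\IC_X^0)\cong\IC_\bR^0$. The closed orbit $\frak LX^0=\frak L^+G\cdot x^0=\frak L^+X$ corresponds to the closed stratum $\Gr_\bR^0$, and on it $\IC_X^0=\omega_{\frak L^+X}[p_X(0)]=\omega_{\frak L^+X}$. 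Next, because $\Phi$ intertwines the $D_{\frak L^+G}(\Gr)$-actions, we get
\[
\Phi(\IC_V*\IC_X^0)\cong \psi(\IC_V)*\IC_\bR^0=V*\IC^0_\bR .
\]
Here we use that the action on the real side through $D_{\frak L^+G}(\Gr)$ is by definition via $\psi$. Proposition~\ref{generation}(2) then transports to give (3).

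For (2), the intertwining gives $\Phi(V*F)\cong V*\Phi(F)$, and $\Phi$ and $\Phi^{-1}$ are both $t$-exact. So $V*(-)$ is $t$-exact on $D_{\frak L^+G}(\frak LX)$ if and only if it is $t$-exact on $D_{\frak L^+G_\bR}(\Gr_\bR)$. By Proposition~\ref{exactness}(1), the latter holds exactly when $G_\bR=U(p,p)$ or $U(p,p+1)$. These correspond to $X=GL_{2p}/(GL_p\times GL_p)$ and $X=GL_{2p+1}/(GL_p\times GL_{p+1})$, which are precisely the quasi-split cases noted after the description of $L$.

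The main obstacle is the compatibility claim in the first paragraph: that the equivalence matches the real Hecke action, defined through $\psi$, and not merely some abstract action of $D_{\frak L^+G}(\Gr)$, and that it matches the strata and the base objects. We would extract both facts from the construction in \cite[Theorem 1.1]{CN}, where $\Phi$ arises from a nearby-cycles/Beilinson–Drinfeld degeneration of exactly the type used to define $\psi$ in \eqref{sp}. The remaining steps are formal.
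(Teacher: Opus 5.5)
Your proposal is correct and follows the paper's route: the paper simply states that all three parts follow directly from Propositions~\ref{exactness}, \ref{generation} and \ref{real-symmetric}, i.e.\ by transporting the real-side statements across the $t$-exact, Hecke-compatible real-symmetric equivalence, exactly as you do. You also make explicit two inputs from the construction in \cite{CN} that the paper leaves implicit. The first is that the equivalence matches strata and intermediate extensions, which is needed to identify the simples as the $\IC_X^\lambda$. The second is that the real-side action of $D_{\frak L^+G}(\Gr)$ is through $\psi$.
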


\section{Spectral side}
\subsection{Hamiltonian dual varieties}
We recall the construction in \cite{BZSV} of the Hamiltonian dual variety
of $X=GL_{p+q}/(GL_p\times GL_q)$. Put $N=p+q$, $\delta=q-p$, and
$\check G=GL_N$. Throughout the spectral calculations, choose the lower
triangular Borel $\check B$, with diagonal torus $\check T$ and unipotent
radical $\check U$, with simple root vectors $E_{i+1,i}$.
For a block of size $m$, set
\[
 \tau_m(t)=\operatorname{diag}(t^{1-m},t^{3-m},\ldots,t^{m-1}),\qquad
 e_m=\sum_{i=1}^{m-1}i(m-i)E_{i+1,i},\qquad
 f_m=\sum_{i=1}^{m-1}E_{i,i+1}.
\]
Thus $(e_m,[e_m,f_m],f_m)$ is a principal $\mathfrak{sl}_2$-triple and
$\tau_m$ is the sum of the positive coroots for the lower triangular Borel.

Let $\check L=I_{2p}\times GL_\delta\subset\check G$ be the Levi subgroup, and write
\begin{equation}\label{rho}
 \lambda(t)=2\rho_{\check L}(t)
 =\operatorname{diag}(I_{2p},\tau_\delta(t)).
\end{equation}
For $\delta=0$, put $\lambda=1$. Choose the principal homomorphism
$\iota_{\check L}:SL_2\to\check L$ whose differentials on the upper and
lower root vectors are $e_\delta$ and $f_\delta$, respectively, and whose
restriction to the diagonal torus is $\lambda$. For $\delta\leq1$ this
homomorphism is trivial. Let
$\check P_X=\check L_X\check N_X$ be the parabolic determined by $\lambda$:
its Lie algebra is the sum of the nonnegative weight spaces,
$\check L_X=Z_{\check G}(\lambda)$, and $\operatorname{Lie}\check N_X$ is
the sum of the positive weight spaces. Explicitly,
\[
 \check L_X\cong
 \begin{cases}
 GL_{2p}\times GL_1^{q-p},&p+q\in2\mathbb Z,\\
 GL_{2p+1}\times GL_1^{q-p-1},&p+q\in2\mathbb Z+1.
 \end{cases}
\]
We will use the distinguished commuting subgroup
\[
 \check L'_X=GL_{2p}\times I_{q-p}\cong GL_{2p}.
\]
Let $V=\mathbb C^{2p}$, with standard basis
$\mathbf e_1,\ldots,\mathbf e_{2p}$ and the symplectic form characterized by
$\omega(\mathbf e_i,\mathbf e_{2p+1-i})=1$ for $1\leq i\leq p$, skew-symmetry, and zero
on all other pairs of basis vectors. The dual group of $X$ is
\[
 \check G_X=\Sp(V)\subset\check L'_X\subset\check G.
\]
The block $GL_{2p}$ commutes with $\iota_{\check L}(SL_2)$, so
\[
 \rho_X:\check G_X\times SL_2\longrightarrow\check G,\qquad
 (g,h)\longmapsto g\,\iota_{\check L}(h)
\]
is the Arthur homomorphism associated to $X$.

Denote by $\check\fg, \check\fg_X,\check\fn_X$, etc,  the Lie algebra of $\check G$, $\check G_X$, $\check N_X$, etc.
Let $\Phi_{\mathrm{tr}}:\check\fg\cong\check\fg^*$ be the $\check G$-equivariant isomorphism induced by the trace paring.
Let $e_X=d\iota_{\check L}(\begin{pmatrix}
   0& 1 \\
    0 & 0
\end{pmatrix} )$
and $y_X=d\iota_{\check L}(\begin{pmatrix}
   0& 0 \\
    1 & 0
\end{pmatrix} )$,
and $\psi_X=\Phi_{\mathrm{tr}}(y_X)\in\check\fg^*$.
Denote by $\check\fn_{X,\geq2}\subset\check\fn_X$ be the $\geq2$ weight subspace with respect to
the action of $\lambda$. Put
$\check N_{X,\geq2}=\exp(\check\fn_{X,\geq2})$.
Following \cite[Section 3.4.2]{BZSV}, we will consider the following three  Hamiltonian 
varieties:
\begin{enumerate}

\item the cotangent bundle 
$T^*\check G$ viewed as $\check G\times\check G$-Hamiltonian 
space with moment map 
$T^*\check G\cong\check G\times\check\fg^*\to\check\fg^*\times \check\fg^*$ sending 
$(g,v)\to(\text{Ad}_gv,-v)$
(here we identify $\check\fg^*$ as left invariant differential forms on $\check G$).

\item  the  symplectic representation $\check S_X$ of $\check G_X=\Sp_{2p}$ 
given by
\[
	\check S_X=
	\begin{cases}
		T^*\mathbb A^
        {2p}\ \ \ \ p=q,\\
		0\ \ \ \ \ \ \ \ \ \ \ p\neq q.
	\end{cases}
\]
where $T^*\mathbb A^
        {2p}$ is the cotangent bundle of the standard representation 
        $\mathbb A^
        {2p}$ of $\Sp_{2p}$,
viewed as $\check G_X$-Hamiltonian space 
with the standard moment map 
$\check S_X\to\check\fg_X^*=\mathfrak{sp}_{2p}^*$
sending $s\in\check S_X$ to $(v\to\frac{1}{2}\omega_{\check S_X}(vs,s))\in\mathfrak{sp}_{2p}^*$
(here $\omega_{\check S_X}$ is the canonical symplectic form on $T^*\mathbb A^{2p}$)

\item
the quotient space
$\check\fn_X/\check\fn_{X,\geq2}$
with the symplectic form
 $\omega_{\check{\mathfrak n}_X}(u,u')=\psi_X([u,u'])$
and 
the Hamiltonian $\check G_X\check N_X$-action given by 
 
 (i) the $\check G_X$-action is through the adjoint action
 
(ii) the $\check N_X$-action is through the translation action 
 $\check N_X/\check N_{X,\geq2}\cong \check\fn_X/\check \fn_{X,\geq2}$
 
 (iii) the $\check G_X$-moment map 
is given by 
\[\check\fn_X/\check \fn_{X,\geq2}\to\mathrm{Lie}(\Sp(\check\fn_X/\check \fn_{X,\geq2}))^*\to\check\fg^*_X\]
where the first map the canonical moment map on the symplectic vector space and the second map is induced by the inclusion 
$\check G_X\to \Sp(\check\fn_X/\check \fn_{X,\geq2}))$

(iv) the $\check N_X$-moment map is given by $\psi_X$-shifted moment map
 \[\check\fn_X/\check \fn_{X,\geq2}\cong (\check\fn_X/\check \fn_{X,\geq2})^*\to \check\fn_X^*\ \ \ v\to v+\psi_X|_{\check\fn_X}\]
where the first isomorphism is induced by the symplectic form.
\end{enumerate}

\begin{example}\label{even/odd}
(1) Assume $p+q$ is odd. 
Then we have $\check S_X=0$. Moreover, it follows from~\eqref{rho} that the weights with respect to co-character $2\rho_{\check L}$ 
are all even numbers and it follows that 
$\check\fn_X/\check\fn_{X,\geq2}=0$
    
(2)    
Assume $p+q$ is even. 
If $p=q$, then we have $\check S_X\cong T^*\mathbb A^{2p}$ and 
$\check\fn_X/\check\fn_{X,\geq2}=0$.
If $p\neq q$, 
then we have 
$\check S_X=0$ and a direct computation show that 
$\Sp_{2p}$-equivariant isomorphism 
\[\check\fn_X/\check\fn_{X,\geq2}\cong\check\fn_{X,1} \cong T^*\mathbb A^{2p}.\]
where $\check\fn_{X,1}$ is the weight one subspace.

\end{example}

The product space 
\[T^*\check G\times \check S_X\times\check\fn_X/\check\fn_{X,\geq2}\]
is naturally a Hamiltonain $\check G_X\check N_X$-space, 
where $\check N_X$ acts trivially on 
$\check S_X$
and $\check G_X\check N_X\subset\check G$  acts on $T^*\check G$ through the right $\check G$-action,
with moment map 
\[\mu_{\psi_X}:T^*\check G\times \check S_X\times\check\fn_X/\check\fn_{X,\geq2}\to\check\fg_X^{*}\times\check\fn_X^*\]
and 
the dual variety of $X$ is defined as the following Hamiltonian reduction  
\[\check M=\mu^{-1}_{\psi_X}(0)\sslash\check G_X\check N_X=T^*\check G\times_{\check\fg_X^{*}\times\check\fn_X^*}^{\check G_X\check N_X}(\check S_X\times\check\fn_X/\check\fn_{X,\geq2})\]
We have the residual moment map 
\[\mu:\check M\to\check\fg^*\]
coming from the left $\check G$-action on $T^*\check G$. Moreover, $\mu$ is $\mathbb G_m$-equivariant with respect to the square action on $\check\fg^*$ and the $\mathbb G_m$ on 
$\check M$ induced by
\begin{enumerate}
    \item the scalar action on $\check S_X$
    and $\check\fn_X/\check\fn_{X,\geq2}$
    \item the action on 
    $T^*\check G\cong\check G\times\check\fg^*$
    given by $t(g,v)=(g2\rho_{\check L}(t^{-1}),t^2\text{Ad}_{2\rho_{\check L}(t^{})}(v)).$
\end{enumerate}

\begin{example}\label{cotangent types}
(1) Assume $p+q$ is odd.
Then it follows from Example \eqref{even/odd}
that 
\[\check M\cong T^*_{\psi_X}(GL_{p+q}/\Sp_{2p}\check N_X)\cong GL_{p+q}\times^{\Sp_{2p}
\check N_X}(\psi_X+(\mathfrak{sp}_{2p}\oplus\check\fn_X)^\perp)\]
is isomorphic to 
the $\psi_X$-twisted cotangent bundle of $GL_{p+q}/\Sp_{2p}\check N_X$
with respect to the covector 
$\psi_X|_{\mathfrak{sp}_{2p}\oplus\check\fn_X}\in(\mathfrak{sp}^*_{2p}\oplus\check\fn_X^*)^{\Sp_{2p}\check N_X}$,
and the moment map is given by
\[\mu:\check M\to\check\fg^*,\qquad [g,v]\longmapsto\operatorname{Ad}_g v.\]
Moreover, if $q=p+1$, then
both
$\check N_X$ and $\psi_X$ are trivial, and 
\[\check M\cong T^*(GL_{2p+1}/\Sp_{2p})\cong GL_{2p+1}\times^{\Sp_{2p}}\mathfrak{sp}_{2p}^\perp\]
is isomorphic to the cotangent bundle of
$GL_{2p+1}/\Sp_{2p}$; here
$\mathfrak{sp}_{2p}^{\perp}$ is taken in $\mathfrak{gl}_{2p+1}$.

(2) Assume $p=q$. Then $\check N_X$ is trivial, and Hamiltonian induction
identifies
\[
 \check M\simeq T^*(GL_{2p}/\Sp_{2p}\times V)
 \simeq GL_{2p}\times^{\Sp_{2p}}
       (\mathfrak p_{2p}\times V\times V),
 \qquad \mathfrak p_{2p}=\mathfrak{sp}_{2p}^{\perp}\subset\mathfrak{gl}_{2p}.
\]
Use $\omega$ to identify $V^*$ with $V$, and put
$R_{u,v}(z)=\omega(v,z)u$. In the product cotangent coordinates the
moment map is
\begin{equation}\label{mu}
 \mu([g,T,u,v])=\operatorname{Ad}_g(T+R_{u,v}),
 \qquad T\in\mathfrak p_{2p}.
\end{equation}
For comparison with the trace-orthogonal coordinates in the Hamiltonian
reduction, set
\[
 \beta(u,v)=\operatorname{pr}_{\mathfrak{sp}_{2p}}R_{u,v}
          =\tfrac12(R_{u,v}+R_{v,u}),\qquad
 A=T+\operatorname{pr}_{\mathfrak p_{2p}}R_{u,v}.
\]
Then $T+R_{u,v}=A+\beta(u,v)$. Both coordinate descriptions give the
grading
$t[g,A,u,v]=[g,t^2A,tu,tv]$.
\end{example}

\subsection{Vector bundle structures}
Consider the $\check G_X\times\mathbb G_m$-representation
\[
 \check V_X=\check S_X\times
       \bigl(\check\fg_X^\perp\cap Z_{\check\fg}(e_X)\bigr).
\]
The action on $\check S_X$ is scalar multiplication, and the action on
the second factor is $x\mapsto t^2\operatorname{Ad}_{\lambda(t)}x$.
The  construction of \cite[Section 3.4.8, (3.18)--(3.19)]{BZSV}
gives an isomorphism
\begin{equation}\label{vector bundle}
 \check M\simeq\check G\times^{\check G_X}\check V_X.
\end{equation}
Here and below our associated-bundle convention is
$(g,x)\sim(gh^{-1},hx)$.

In terms of the isomorphism above the moment map is given as follows.
If $\delta=0$, 
then $\check V_X\cong \mathfrak p_{2p}\times V\times V$ and 
the moment map is described in Example~\ref{cotangent types} (2).
If $\delta>0$, then $\check S_X=0$ and we set
\[
 \mathcal Z_\delta
 =f_\delta+\bigl(\check\fg_X^\perp\cap Z_{\check\fg}(e_\delta)\bigr),
\]
where the two nilpotents occupy the final $\delta$-block. Translation by
$f_\delta$ identifies $\mathcal Z_\delta$ with $\check V_X$, and
\begin{equation}\label{slodowy moment}
 \check M\simeq\check G\times^{\check G_X}\mathcal Z_\delta,
 \qquad \mu([g,x])=\operatorname{Ad}_g x.
\end{equation}
The grading is
\begin{equation}\label{dual grading}
 t[g,x]=[g\lambda(t)^{-1},t^2\operatorname{Ad}_{\lambda(t)}x].
\end{equation}

\begin{lemma}\label{decomp}
There is a $\check G_X\times\mathbb G_m$-equivariant isomorphism
\[
 \check V_X\simeq
 \wedge^2V\times T^*V\times\mathfrak c_{GL_\delta}.
\]
The degrees on the first two factors are $2$ and $\delta+1$,
respectively. The coefficient of order $i$ of the monic polynomial
parametrized by $\mathfrak c_{GL_\delta}$ has degree $2i$.
\end{lemma}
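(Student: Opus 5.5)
The plan is to decompose $\check\fg=\mathfrak{gl}_N$ as a representation of $\check G_X\times\iota_{\check L}(SL_2)=\Sp(V)\times SL_2$ and then read off both $\check\fg_X^\perp\cap Z_{\check\fg}(e_X)$ and its grading. Write $\mathbb C^N=V\oplus W$ with $W=\mathbb C^\delta$; as an $\Sp(V)\times SL_2$-module this is $V\otimes\mathbf 1\oplus\mathbf 1\otimes S^{\delta-1}$, where $S^k$ is the irreducible $SL_2$-module of dimension $k+1$. Hence
\[
\check\fg\cong \mathfrak{gl}(V)\otimes\mathbf 1\;\oplus\;\bigl(V\otimes S^{\delta-1}\bigr)^{\oplus 2}\;\oplus\;\mathbf 1\otimes\mathfrak{gl}(W),
\]
with $\mathfrak{gl}(V)=\mathfrak{sp}(V)\oplus\wedge^2V$ (using $\omega$ to identify $\wedge^2V$ with $\mathfrak p_{2p}$, which contains the trivial summand spanned by the identity). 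Also $\mathfrak{gl}(W)\cong\bigoplus_{k=0}^{\delta-1}S^{2k}$. The trace-orthogonal complement of $\check\fg_X=\mathfrak{sp}(V)$ kills exactly the $\mathfrak{sp}(V)$ summand, since the trace form pairs isotypic components separately. Taking $e_X$-invariants, i.e.\ highest weight vectors of each $SL_2$-isotypic piece, gives
\[
\check\fg_X^\perp\cap Z_{\check\fg}(e_X)\cong \wedge^2 V\;\oplus\;(V\oplus V)\;\oplus\;\bigoplus_{k=0}^{\delta-1}\mathbb C\, e_\delta^{k}.
\]
Here the middle term consists of one highest weight line in each copy of $S^{\delta-1}$, and the last is the Kostant slice part $Z_{\mathfrak{gl}_\delta}(e_\delta)$. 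For $\delta=0$ the middle and last terms vanish and $\check S_X=T^*V$ supplies the $T^*V$ factor instead, matching Example~\ref{cotangent types}(2).

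Next I identify the pieces equivariantly. The term $V\oplus V$ is $T^*V$ via $\omega$, with $\Sp(V)$ acting diagonally; I should check that the symplectic structure coming from the two off-diagonal blocks $\operatorname{Hom}(W,V)$ and $\operatorname{Hom}(V,W)$ is the standard one, although only the $\check G_X$-module structure is needed here. The slice $f_\delta+Z_{\mathfrak{gl}_\delta}(e_\delta)$ maps isomorphically to $\mathfrak c_{GL_\delta}$ by the characteristic polynomial. This is Kostant's theorem, and it holds with our conventions because $(e_\delta,[e_\delta,f_\delta],f_\delta)$ is a principal triple. Since $\Sp(V)$ acts trivially on these last two summands, the isomorphism is $\check G_X$-equivariant.

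Finally the grading. The action $x\mapsto t^2\operatorname{Ad}_{\lambda(t)}x$ with $\lambda=\operatorname{diag}(I_{2p},\tau_\delta)$ acts on a vector of $\lambda$-weight $m$ by $t^{2+m}$. On $\wedge^2V$ the weight is $0$, so the degree is $2$. On a highest weight vector of $S^{\delta-1}$ inside $\operatorname{Hom}(W,V)$ or $\operatorname{Hom}(V,W)$, the $\tau_\delta$-weight is $\delta-1$, so the degree is $\delta+1$. One must confirm the sign: the highest weight line for $e_\delta$, which is lower triangular here, sits at the extreme eigenvalue $\pm(\delta-1)$, and we need the one with weight $+(\delta-1)$ in both blocks. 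This sign check, done against the explicit form of $\tau_m$ and $e_m$, is the step I expect to need the most care. On $e_\delta^k$ the weight is $2k$, so the degree is $2k+2$. Under Kostant's isomorphism this matches the coefficient of order $i=k+1$ having degree $2i$, because the $\mathbb G_m$-action on the slice is the Kazhdan action, which is compatible with scaling $\mathfrak c_{GL_\delta}$ by $t^{2i}$ on the $i$-th coefficient. For $\delta=0$, the scalar action on $\check S_X$ gives degree $1=\delta+1$, consistent with the statement.
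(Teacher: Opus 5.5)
Your proposal is correct and is essentially the paper's argument. The paper also splits $\mathfrak{gl}_N$ into the $\mathfrak{gl}_{2p}$ block, the two off-diagonal blocks and $\mathfrak{gl}_\delta$, computes $\ker\operatorname{ad}(e_X)$ blockwise (your highest-weight lines are its explicit $b\varepsilon_1^{\mathrm t}$ and $\varepsilon_\delta c$), and finishes with the equivariant Kostant slice $f_\delta+Z_{\mathfrak{gl}_\delta}(e_\delta)\simeq\mathfrak c_{GL_\delta}$. The sign check you flag is automatic: $\operatorname{Ad}_{\lambda(t)}e_X=t^2e_X$ makes $e_X$ a raising operator for $\lambda$, so its kernel on each copy of $S^{\delta-1}$ is the line of weight $+(\delta-1)$, which gives degree $\delta+1$ in both blocks.
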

\begin{proof}
For $\delta=0$, this is the decomposition
$\check V_X=\mathfrak p_{2p}\times T^*V$, with
$\mathfrak p_{2p}\simeq\wedge^2V$.
For $\delta>0$, the nilpotent has the block form
\[
 e_X=\begin{pmatrix}0_{2p}&0\\0&e_\delta\end{pmatrix},
 \qquad
 e_\delta=\sum_{i=1}^{\delta-1}i(\delta-i)E_{i+1,i}.
\]
Let $\varepsilon_1,\ldots,\varepsilon_\delta$ be the standard basis of
$\mathbb C^\delta$. The two off-diagonal centralizer spaces are
\[
 \begin{aligned}
 D&=\{d\in M_{2p,\delta}:de_\delta=0\}
   =\{b\varepsilon_1^t:b\in V\},\\
 C&=\{c'\in M_{\delta,2p}:e_\delta c'=0\}
   =\{\varepsilon_\delta c:c\in V^*\}.
 \end{aligned}
\]
Here $M_{a,b}$ denotes the space of $a\times b$ matrices. Thus
\begin{equation}\label{decomp of V_X}
 \begin{aligned}
 \check V_X
 &=\check\fg_X^\perp\cap Z_{\check\fg}(e_X)
   =\begin{pmatrix}
     \mathfrak p_{2p}&D\\ C&Z_{\mathfrak{gl}_\delta}(e_\delta)
    \end{pmatrix}\\
 &=\left\{
 \begin{pmatrix}
 A&b\varepsilon_1^t\\
 \varepsilon_\delta c&s
 \end{pmatrix}
 : A\in\mathfrak p_{2p},\ b\in V,\ c\in V^*,\
   s\in Z_{\mathfrak{gl}_\delta}(e_\delta)
 \right\}.
 \end{aligned}
\end{equation}
Thus the matrix $x$ in \eqref{slodowy moment} is
\begin{equation}\label{slodowy blocks}
 x(A,b,c,s)=
 \begin{pmatrix}
 A&b\varepsilon_1^t\\
 \varepsilon_\delta c&f_\delta+s
 \end{pmatrix}.
\end{equation}
Since $e_\delta$ is regular nilpotent, its Lie algebra centralizer
$\mathcal S_\delta$ has basis
$I_\delta,e_\delta,\ldots,e_\delta^{\delta-1}$.
The grading gives degrees $2$ on $A$ and $\delta+1$ on $b,c$.
The coefficient of $e_\delta^{i-1}$ has degree $2i$.
The Kostant section
$f_\delta+Z_{\mathfrak{gl}_\delta}(e_\delta)\simeq\mathfrak c_{GL_\delta}$
therefore identifies its grading with the characteristic-polynomial
degrees $2,4,\ldots,2\delta$, proving the assertion.
\end{proof}

\begin{proposition}\label{induced}
There is an isomorphism of varieties
\[
 \check M\simeq GL_{p+q}\times^{GL_{2p}}
 T^*(GL_{2p}/\Sp_{2p}\times V)\times\mathfrak c_{GL_\delta}.
\]
It is $\check G\times\mathbb G_m$-equivariant for the action transported
from \eqref{dual grading} and the fiber degrees in Lemma~\ref{decomp}.
\end{proposition}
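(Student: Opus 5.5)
The plan is to start from the vector bundle presentation \eqref{vector bundle}, $\check M\simeq\check G\times^{\check G_X}\check V_X$, and to rewrite the fiber using Lemma~\ref{decomp}. That lemma gives a $\check G_X\times\mathbb G_m$-equivariant identification $\check V_X\simeq \wedge^2V\times T^*V\times\mathfrak c_{GL_\delta}$, with $\Sp(V)$ acting trivially on $\mathfrak c_{GL_\delta}$. Since $\check G_X=\Sp_{2p}\subset \check L'_X=GL_{2p}\subset\check G$, induction in stages gives
\[
\check G\times^{\Sp_{2p}}\check V_X\simeq \check G\times^{GL_{2p}}\bigl(GL_{2p}\times^{\Sp_{2p}}(\wedge^2V\times T^*V)\bigr)\times\mathfrak c_{GL_\delta}.
\]
The trivial factor $\mathfrak c_{GL_\delta}$ can be pulled out because it carries the trivial $\Sp_{2p}$-action.

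The second step identifies the inner piece. By Example~\ref{cotangent types}(2), applied to $GL_{2p}$ (the case $p=q$), we have $T^*(GL_{2p}/\Sp_{2p}\times V)\simeq GL_{2p}\times^{\Sp_{2p}}(\mathfrak p_{2p}\times V\times V)$. Combined with $\mathfrak p_{2p}\simeq\wedge^2V$ (as $\Sp(V)$-modules, via $A\mapsto \omega(A\cdot,\cdot)$) and $V\times V\simeq T^*V$ (via $\omega$), this identifies $GL_{2p}\times^{\Sp_{2p}}(\wedge^2V\times T^*V)$ with $T^*(GL_{2p}/\Sp_{2p}\times V)$. Substituting gives the claimed isomorphism of varieties. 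For $\delta=0$ it is literally Example~\ref{cotangent types}(2). For $\delta=1$ the factor $\mathfrak c_{GL_1}\simeq\mathbb A^1$ accounts for the corner entry, consistent with Example~\ref{cotangent types}(1).

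The last step, and the only real obstacle, is equivariance. The $\check G$-equivariance for left multiplication is automatic at every stage. The $\mathbb G_m$-action \eqref{dual grading} is $t[g,x]=[g\lambda(t)^{-1},t^2\mathrm{Ad}_{\lambda(t)}x]$. The key observation is that $\lambda(t)=\mathrm{diag}(I_{2p},\tau_\delta(t))$ commutes with $GL_{2p}$, so right multiplication by $\lambda(t)^{-1}$ descends through the induction from $GL_{2p}$. Hence the grading on the big associated bundle is the combination of two pieces: right translation by $\lambda(t)^{-1}$ on the $GL_{p+q}$ factor, and a fiberwise $\mathbb G_m$-action on $T^*(GL_{2p}/\Sp_{2p}\times V)\times\mathfrak c_{GL_\delta}$.

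One then checks, using the explicit matrix form \eqref{slodowy blocks}, that this fiberwise action matches the degrees of Lemma~\ref{decomp}:
\begin{itemize}
\item on $A$, conjugation by $\lambda(t)$ is trivial, so the degree is $2$;
\item on the blocks $b\varepsilon_1^t$ and $\varepsilon_\delta c$, the $\tau_\delta$-weights at the first and last entries are $\mp(1-\delta)$, which together with $t^2$ give degree $\delta+1$;
\item on $\mathfrak c_{GL_\delta}$ the degrees are those of the characteristic-polynomial coefficients.
\end{itemize}
The care required is to make the identification $\mathfrak p_{2p}\times V\times V\simeq\wedge^2V\times T^*V$ compatible with the trace-orthogonal coordinates $A=T+\mathrm{pr}_{\mathfrak p}R_{u,v}$ of Example~\ref{cotangent types}(2). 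That shift is homogeneous of degree $2$ (quadratic in $u,v$ for $\delta=0$, with the analogous degree count for $\delta>0$), so it is a $\mathbb G_m$-equivariant automorphism, and the transported action agrees with the stated fiber degrees.
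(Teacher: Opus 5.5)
Your proposal is correct and is essentially the paper's argument: the paper also combines \eqref{vector bundle}, Lemma~\ref{decomp} and Example~\ref{cotangent types}(2) with induction in stages through $GL_{2p}$, using that $\mathfrak c_{GL_\delta}$ carries the trivial $\Sp_{2p}$-action and that $\lambda(t)$ centralizes $GL_{2p}$. One small correction: for $\delta>0$ the coordinates $b,c$ have degree $\delta+1$, so the shift $\operatorname{pr}_{\mathfrak p_{2p}}R_{b,c}$ has degree $2\delta+2\neq 2$ and is not $\mathbb G_m$-equivariant; however, it is also unnecessary, since you can identify the fiber with $\wedge^2V\times T^*V$ directly in the coordinates $(A,b,c)$ of \eqref{slodowy blocks} (as Lemma~\ref{decomp} does), and the proposition only asserts an isomorphism of varieties with the transported action.
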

\begin{proof}
Combining \eqref{vector bundle}, Lemma~\ref{decomp}
and Example~\ref{cotangent types}, we have 
the desired isomorphism
\[
 \check M\simeq
 GL_{p+q}\times^{\Sp_{2p}}
       (\mathfrak p_{2p}\times T^*V)\times\mathfrak c_{GL_\delta}\simeq GL_{p+q}\times^{GL_{2p}}
 T^*(GL_{2p}/\Sp_{2p}\times V)\times\mathfrak c_{GL_\delta}.
\]

\end{proof}

\subsection{Invariant quotients
and exotic nilpotent cones}\label{invariant quotients}
Put $\delta=q-p$ and $V=\mathbb A^{2p}$. Under the symplectic
identification $\wedge^2V\simeq\mathfrak p_{2p}$, the space
$\mathfrak p_{2p}=\mathfrak{sp}_{2p}^{\perp}$ consists of the
symplectic-self-adjoint endomorphisms. Explicitly,
\[
 \mathfrak p_{2p}
 =\left\{
   \begin{pmatrix}X&Y\\ Z&r(X)\end{pmatrix}
   : X,Y,Z\in\mathfrak{gl}_p,\quad r(Y)=-Y,\quad r(Z)=-Z
  \right\}.
\]
Here $r(X)=J_pX^tJ_p$ is reflection across the anti-diagonal, with
$J_p$ the anti-diagonal permutation matrix.
The invariant ring for the
diagonal action of $\Sp_{2p}$ on
$\mathfrak p_{2p}\times V\times V$ is polynomial, with generators
$b_1,\ldots,b_p,c_0,\ldots,c_{p-1}$ characterized by
\[
 \det(z-T)=P(z)^2,\quad
 P(z)=z^p+b_1z^{p-1}+\cdots+b_p,\qquad
 c_j=-\omega(T^ju,v).
\]
Here $(T,u,v)\in\mathfrak p_{2p}\times V\times V$.
The quotient map is equidimensional; see \cite{S} and \cite{Kn2}.

Recall that $\fc_{GL_m}=\mathfrak{gl}_m\sslash GL_m$ is the coefficient
space of monic polynomials of degree $m$, with origin $z^m$.
Using the trace identification, let
$\chi_m:\mathfrak{gl}_m^*\to\fc_{GL_m}$ be the
characteristic-polynomial quotient map.
We use the coordinates $c_0,\ldots,c_{p-1}$ for a second copy of
$\fc_{GL_p}$ by associating to them the polynomial
\[
 C(z)=z^p+c_0z^{p-1}+\cdots+c_{p-1}.
\]
The final factor $\fc_{GL_\delta}$ has coordinates $d_1,\ldots,d_\delta$,
with
\[
 D(z)=z^\delta+d_1z^{\delta-1}+\cdots+d_\delta;
\]
for $\delta=0$ this factor is a point and $D=1$.
Lemma~\ref{decomp} and Proposition~\ref{induced} give the natural product
presentations
\[
 \begin{aligned}
 (\mathfrak p_{2p}\times V\times V)\sslash\Sp_{2p}
   &\simeq\fc_{GL_p}\times\fc_{GL_p},\ \ \ 
 \check M\sslash\check G
   &\simeq\fc_{GL_p}\times\fc_{GL_p}\times\fc_{GL_\delta}.
 \end{aligned}
\]
In particular,
\[
 \mathcal O(\check M)^{\check G}
 =\mathbb C[b_1,\ldots,b_p,c_0,\ldots,c_{p-1},d_1,\ldots,d_\delta].
\]
The grading transported from $\check M$ is
\[
 \deg b_i=2i,\qquad \deg c_j=2(j+\delta+1),\qquad \deg d_i=2i.
\]
Denote these quotient maps by 
\[\chi_{p,p}:\mathfrak p_{2p}\times V\times V\to\fc_{GL_p}\times\fc_{GL_p},\ \ \ \ 
\chi_{\check M}:\check M\to\fc_{GL_p}\times\fc_{GL_p}\times\fc_{GL_\delta}
.\]
 We next construct
sections of $\chi_{p,p}$ and $\chi_{\check M}$
using the companion-matrix description.
For $b=(b_1,\ldots,b_p)$, take the companion matrix
\[
 \kappa_p(b)=
 \begin{pmatrix}
  0&0&\cdots&0&-b_p\\
  1&0&\cdots&0&-b_{p-1}\\
  0&1&\ddots&\vdots&\vdots\\
  \vdots&\ddots&\ddots&0&-b_2\\
  0&\cdots&0&1&-b_1
 \end{pmatrix}.
\]

\begin{lemma}\label{section}
\begin{enumerate}
\item The map
\[
 \kappa_{p,p}(b,c)=
 \left(
 \begin{pmatrix}\kappa_p(b)&0\\0&r(\kappa_p(b))\end{pmatrix},\,
 \mathbf e_1,\,(0,\ldots,0,-c_{p-1},\ldots,-c_0)^t
 \right)
\]
is an algebraic section of $\chi_{p,p}$.
\item In the product fiber coordinates above, the map
$\kappa_{\check M}=\kappa_{p,p}\times\mathrm{id}_{\fc_{GL_\delta}}$,
\[
 \begin{aligned}
 \fc_{GL_p}\times\fc_{GL_p}\times\fc_{GL_\delta}
 &\xrightarrow{\ \kappa_{\check M}\ }\check V_X\\
 &\hookrightarrow GL_{p+q}\times^{\Sp_{2p}}\check V_X\simeq\check M,
 \end{aligned}
\]
defines an algebraic section of $\chi_{\check M}$, with the final
factor omitted when $\delta=0$.
\end{enumerate}
\end{lemma}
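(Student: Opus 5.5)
We verify part (1) by direct computation of the invariants $b_i$ and $c_j$ on the proposed point, and then deduce part (2) formally from part (1) and the product presentation of $\chi_{\check M}$.

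First, we check that $T_0:=\operatorname{diag}(\kappa_p(b),r(\kappa_p(b)))$ lies in $\mathfrak p_{2p}$. In the block description of $\mathfrak p_{2p}$ we take $X=\kappa_p(b)$ and $Y=Z=0$. The lower block is then $r(X)$, and the conditions $r(Y)=-Y$, $r(Z)=-Z$ hold trivially. Next, $r(\kappa_p(b))=J_p\kappa_p(b)^tJ_p$ is conjugate to $\kappa_p(b)^t$, so it has the same characteristic polynomial $P(z)$. Hence $\det(z-T_0)=P(z)^2$, and $b(T_0)=b$.

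The main computation is the $c_j$. Since $T_0$ is block diagonal and $u=\mathbf e_1$ lies in the first block, we have $T_0^j\mathbf e_1=\kappa_p(b)^j\mathbf e_1$. For $j\le p-1$ this equals $\mathbf e_{j+1}$, because the companion matrix shifts $\mathbf e_i\mapsto\mathbf e_{i+1}$ for $i<p$; the last column, which carries the $b$-coefficients, is not reached. Thus
\[
c_j=-\omega(\mathbf e_{j+1},v),
\]
and only the coordinate of $v$ along $\mathbf e_{2p-j}$ contributes, since $\omega(\mathbf e_{j+1},\mathbf e_{2p-j})=1$ and $\omega$ vanishes on all other pairs with $\mathbf e_{j+1}$. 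The proposed $v=(0,\dots,0,-c_{p-1},\dots,-c_0)^t$ has entry $-c_{k}$ in position $2p-k$ for $0\le k\le p-1$. Therefore $-\omega(\mathbf e_{j+1},v)=-(-c_j)=c_j$.

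The only real obstacle is sign and index bookkeeping, namely the orientation of $\omega$ and the identification $V^*\simeq V$. If the sign convention turns out opposite, we flip the sign of $v$; this still yields an algebraic section. Since $b$ and $c$ are polynomial in the coordinates and $\chi_{p,p}\circ\kappa_{p,p}=\mathrm{id}$ holds pointwise, $\kappa_{p,p}$ is an algebraic section. Its image avoids no constraint, because $T_0\in\mathfrak p_{2p}$ for every $b$.

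For part (2), we use the product presentation $\check V_X\simeq\mathfrak p_{2p}\times V\times V\times\mathfrak c_{GL_\delta}$ from Lemma~\ref{decomp}, together with $\check M\sslash\check G\simeq\check V_X\sslash\Sp_{2p}$, which comes from the associated bundle $\check M\simeq\check G\times^{\check G_X}\check V_X$ in \eqref{vector bundle}. Under these identifications $\chi_{\check M}$ restricted to the fiber $\check V_X$ is $\chi_{p,p}\times\mathrm{id}$. Here the $\mathfrak c_{GL_\delta}$ coordinate is itself $\Sp_{2p}$-invariant and is the Kostant coordinate $s$. Composing with the inclusion $x\mapsto[1,x]$ of the fiber gives $\chi_{\check M}\circ\kappa_{\check M}=(\chi_{p,p}\circ\kappa_{p,p})\times\mathrm{id}=\mathrm{id}$. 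When $\delta=0$ the final factor is simply dropped.
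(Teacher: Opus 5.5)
Your proposal is correct and follows the paper's argument: $\det(z-T)=P(z)^2$ because $r(\kappa_p(b))$ is conjugate to $\kappa_p(b)^t$; $T^j\mathbf e_1=\mathbf e_{j+1}$ for $j<p$ gives $-\omega(T^ju,v)=c_j$; and part (2) is immediate from the product presentation. Your index check already shows that the sign is correct as stated, so drop the fallback of flipping $v$. That fallback would in any case prove a different statement, since the lemma fixes $v$.
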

\begin{proof}
Write $\kappa_{p,p}(b,c)=(T,u,v)$. The characteristic polynomial of
$T$ is $(z^p+b_1z^{p-1}+\cdots+b_p)^2$. Since
$T^ju=\mathbf e_{j+1}$ for $0\leq j<p$, we also have
$-\omega(T^ju,v)=c_j$. This proves (1); part (2) follows immediately
from part (1).
\end{proof}

The moment map $\mu:\check M\to\mathfrak{gl}_{p+q}^*$ is
$GL_{p+q}$-equivariant and hence gives rise to the following
commutative diagram:
\begin{equation}\label{moment quotient}
\begin{tikzcd}
\check M\ar[r,"\mu"]\ar[d,"\chi_{\check M}",swap]
 &\mathfrak{gl}_{p+q}^*\ar[d,"\chi_N"]\\
\fc_{GL_p}\times\fc_{GL_p}\times\fc_{GL_\delta}
 \ar[r,"\bar\mu",swap]
 &\fc_{GL_{p+q}}.
\end{tikzcd}
\end{equation}
Let $\mathcal N_{\check\fg^*}=\chi_N^{-1}(0)$ be the nilpotent
cone for $\mathfrak{gl}_{p+q}^*$, and put
$\mathcal N'_{\Sp_{2p}}=\chi_{p,p}^{-1}(0)$, the exotic null-cone
in this representation; see \cite{Ka}. It follows from
Proposition~\ref{induced} that
\[
 \chi_{\check M}^{-1}(0)
 \simeq GL_{p+q}\times^{\Sp_{2p}}\mathcal N'_{\Sp_{2p}}.
\]
Consider the preimage $\mu^{-1}(\mathcal N_{\check\fg^*})\subset\check M$
of the nilpotent cone under the moment map.

\begin{lemma}\label{exotic nilpotent cone}
The natural closed embedding
\[
 GL_{p+q}\times^{\Sp_{2p}}\mathcal N'_{\Sp_{2p}}
 \simeq\chi_{\check M}^{-1}(0)
 \hookrightarrow\mu^{-1}(\mathcal N_{\check\fg^*})
\]
induces a bijection on $\mathbb C$-points.
\end{lemma}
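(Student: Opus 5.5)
Both sides are $\check G\times\mathbb G_m$-stable closed subvarieties of $\check M$, and $\chi_{\check M}^{-1}(0)\subset\mu^{-1}(\mathcal N_{\check\fg^*})$ holds by the commutative diagram \eqref{moment quotient}, since $\bar\mu(0)=0$. Indeed, $\bar\mu$ is $\mathbb G_m$-equivariant for positive weights on both sides, so it sends the cone point to the cone point. The statement therefore reduces to the reverse containment on points: if $m\in\check M$ has nilpotent moment image, then every $\check G$-invariant function of positive degree vanishes at $m$. In other words, I must show $\bar\mu^{-1}(0)=\{0\}$ set-theoretically in $\fc_{GL_p}\times\fc_{GL_p}\times\fc_{GL_\delta}$. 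Granting this, any $m\in\mu^{-1}(\mathcal N_{\check\fg^*})$ satisfies $\bar\mu(\chi_{\check M}(m))=\chi_N(\mu(m))=0$, hence $\chi_{\check M}(m)=0$.

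To compute $\bar\mu$, I use the section $\kappa_{\check M}$ of Lemma~\ref{section}: $\bar\mu(b,c,d)$ is the characteristic polynomial of $\mu(\kappa_{\check M}(b,c,d))$. With \eqref{slodowy moment} and \eqref{slodowy blocks}, this is the matrix
\[
 x=\begin{pmatrix} T & u\varepsilon_1^t\\ \varepsilon_\delta c' & f_\delta+s\end{pmatrix},
\]
where $T=\operatorname{diag}(\kappa_p(b),r(\kappa_p(b)))$, $u=\mathbf e_1$, and $c'$ corresponds to $v$ via $\omega$. The block $f_\delta+s$ is the Kostant/companion matrix with characteristic polynomial $D(z)$. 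For $\delta=0$ I use \eqref{mu} instead, with $T+R_{u,v}$.

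I then compute $\det(z-x)$ via a Schur complement. The off-diagonal blocks have rank one, so
\[
 \det(z-x)=\det(z-T)\det(z-f_\delta-s)-(\text{correction}),
\]
where the correction involves $c'(z-T)^{\mathrm{adj}}u$ times the $(\delta,1)$ entry of $(z-f_\delta-s)^{\mathrm{adj}}$. That entry is a constant, $\pm1$, because $f_\delta+s$ is a companion-type matrix. The expected formula is
\[
 \bar\mu(b,c,d)=P(z)^2D(z)+\epsilon\,P(z)\,\tilde C(z),
\]
where $\tilde C$ is a polynomial of degree $<p$ whose coefficients are, up to a triangular change of variables, the $c_j$. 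Degree bookkeeping supports this: $\deg c_j=2(j+\delta+1)$, and the coefficient of $z^{N-k}$ has weight $2k$, consistent with $P\tilde C$ sitting in degrees $z^{p+p-1-j}$. The factor $P$ arises because $(z-T)^{\mathrm{adj}}$ restricted to the cyclic vector $\mathbf e_1$ of the first $\kappa_p(b)$ block involves only $\det(z-\kappa_p(b))=P(z)$ times a polynomial vector.

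Finally, if $\bar\mu(b,c,d)=z^N$, then the formula gives $P(z)\bigl(P(z)D(z)+\epsilon\tilde C(z)\bigr)=z^N$. Unique factorization forces $P(z)=z^p$, so $b=0$. Then $z^pD(z)+\epsilon\tilde C(z)=z^{p+\delta}$, and since $\deg\tilde C<p$, comparing the top $\delta$ coefficients gives $D=z^\delta$, i.e. $d=0$. It then follows that $\tilde C=0$, and hence $c=0$. The case $\delta=0$ is identical, with $D=1$.

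The main obstacle is the determinant computation establishing that the correction term is exactly $P(z)$ times an invertibly-reparametrized $\tilde C$. In particular, I must check that the rank-one correction does not introduce terms involving $b$ beyond the factor $P$. I would first verify this carefully in the case $\delta=0$ using $R_{u,v}$ and the cyclicity $T^ju=\mathbf e_{j+1}$, and then handle $\delta>0$ by the Schur complement.
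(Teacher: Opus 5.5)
Your proposal is correct and follows the paper's proof. It uses the same reduction to $\bar\mu^{-1}(0)=\{0\}$, the same section $\kappa_{\check M}$, and the same factorization $\det(z-x)=P(z)\bigl(P(z)D(z)-\tilde C(z)\bigr)$, where $\tilde c_k=c_k+b_1c_{k-1}+\cdots+b_kc_0$, followed by coefficient comparison.

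The paper avoids your ``main obstacle'' by noting that $r(\kappa_p(b))$ is a direct summand of the moment matrix, which forces $b=0$ before any expansion, so the $b$-dependence inside $\tilde C$ never matters. One small fix: the constant you need is the $(1,\delta)$ entry of $\operatorname{adj}(z-f_\delta-s)$ (the $(\delta,1)$ cofactor), which equals $1$; the $(\delta,1)$ entry of the adjugate depends on $s$.
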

\begin{proof}
Denote by $\bar\mu$ the lower arrow in~\eqref{moment quotient}.
In view of the commutativity of that diagram, it suffices to show
that the $\mathbb C$-points of $\bar\mu^{-1}(0)$ consist only of the
origin. By Lemma~\ref{section}, we have
\[
 \bar\mu(b,c,d)=\chi_N\circ\mu\circ\kappa_{\check M}(b,c,d),
\]
and $\bar\mu(b,c,d)=0$ if and only if the characteristic polynomial of
$\mu(\kappa_{\check M}(b,c,d))$ is $z^{p+q}$. Thus we reduce to
checking that
\[
 \det\bigl(z-\mu(\kappa_{\check M}(b,c,d))\bigr)=z^{p+q}
 \quad\Longleftrightarrow\quad (b,c,d)=0.
\]

Write $\kappa_{\check M}(b,c,d)=(T,u,v,d)$, put
$K_b=\kappa_p(b)$, and set
\[
 R(c)=
 \begin{pmatrix}
 c_0&\cdots&c_{p-1}\\
 0&\cdots&0\\
 \vdots&&\vdots\\
 0&\cdots&0
 \end{pmatrix}.
\]
Then equations~\eqref{mu} and~\eqref{slodowy blocks} imply
\[
 \mu(\kappa_{\check M}(b,c,d))=
 \begin{cases}
 \begin{pmatrix}K_b+R(c)&0\\0&r(K_b)\end{pmatrix},&p=q,\\[8pt]
 \begin{pmatrix}
   K_b&0&E_{p,\delta}\\
   0&r(K_b)&0\\
   F_{\delta,p}(c)&0&f_\delta+s(d)
  \end{pmatrix},&p<q.
 \end{cases}
\]
Here $E_{p,\delta}$ is $p\times\delta$ with its only nonzero entry
$1$ in position $(1,1)$, and $F_{\delta,p}(c)$ is $\delta\times p$
with last row $(c_0,\ldots,c_{p-1})$ and all other rows zero.
For $\delta>0$, $s(d)\in Z_{\mathfrak{gl}_\delta}(e_\delta)$ is the unique element
such that $\det(z-f_\delta-s(d))=D(z)$.

First, in both cases $r(K_b)$ is a direct summand of the moment matrix.
If the moment matrix is nilpotent, then $r(K_b)$ is nilpotent. Since
\[
 \det(z-r(K_b))=z^p+b_1z^{p-1}+\cdots+b_p,
\]
this forces $b=0$.

With $b=0$, expansion of the displayed determinants gives
\[
 \det\bigl(z-\mu(\kappa_{\check M}(0,c,d))\bigr)
 =z^p\left(z^pD(z)-\sum_{j=0}^{p-1}c_jz^{p-1-j}\right),
 \qquad D=1\text{ if }\delta=0.
\]
Thus nilpotency is equivalent to
\[
 z^pD(z)-\sum_{j=0}^{p-1}c_jz^{p-1-j}=z^q.
\]
The term $z^pD(z)$ has no powers of degree below $p$, so comparison
of those coefficients gives $c=0$. We then obtain
$D(z)=z^{q-p}=z^\delta$, hence $d=0$.
Conversely, the same formula gives $z^{p+q}$ when $(b,c,d)=0$.
This proves the lemma.
\end{proof}

\subsection{Whittaker cotangent bundles}\label{sect:whit cot}
Let $\check K=GL_p\times GL_q$ and let $\check P$ be the lower
triangular parabolic with Levi $\check K$.
Put
\[
 f_{p,q}=\operatorname{diag}(f_p,f_q),\qquad
 \tau_{\check K}(t)=\operatorname{diag}(\tau_p(t),\tau_q(t)),
 \qquad \tau_{\check G}(t)=\tau_N(t).
\]
The additive character $\psi_{\check K}:\check U\to\mathbb G_a$ has
differential $u\mapsto\operatorname{tr}(f_{p,q}u)$: it is $1$ on the pinned
simple root vectors in $\check K$ and $0$ on the simple root vectors
outside $\check K$. For the right action $g\mapsto gu^{-1}$,
the Whittaker cotangent bundle is the Hamiltonian reduction
at $-d\psi_{\check K}$:
\[T^*_{\psi_{\check K}}(\check G/\check U)
 \simeq\check G\times^{\check U}(f_{p,q}+\check{\mathfrak b})
 \]
 with moment map sending $[g,x]$ to $\operatorname{Ad}_g x$.
The grading relevant to the dual variety is defined by
\begin{equation}\label{Whittaker cocharacter}
 \gamma(t)=\tau_{\check K}(t)^2\tau_{\check G}(t)^{-1}
 =\tau_{\check K}(t)\operatorname{diag}(t^qI_p,t^{-p}I_q),
\end{equation}
and
\begin{equation}\label{Whittaker grading}
 t[g,x]=[g\gamma(t)^{-1},t^2\operatorname{Ad}_{\gamma(t)}x].
\end{equation}

\subsection{Regular centralizers}\label{sect: reg cent}
We recall some facts and constructions related to the group scheme of
regular centralizers (see, e.g., \cite{BF,Kn1,Ngo,Gi,L}).
Let $J_{\check G}\to\fc_{\check G}$ (resp.
$J_{\check K}\to\fc_{\check K}$) be the group scheme of regular
centralizers of $\check G$ (resp. $\check K$).
Choose the homogeneous Kostant sections
$\kappa_{\check G}:\fc_{\check G}\to\check\fg^*$ and
$\kappa_{\check K}:\fc_{\check K}\to\check\fk^*$, whose images on a block
of size $m$ are $f_m+Z_{\mathfrak{gl}_m}(e_m)$. They give rise to closed
embeddings
\[
 J_{\check G}\longrightarrow\fc_{\check G}\times\check G,
 \qquad
 J_{\check K}\longrightarrow\fc_{\check K}\times\check K.
\]
The fiber over $x$ is the centralizer of $\kappa_{\check G}(x)$,
respectively of $\kappa_{\check K}(x)$. Moreover, we have natural
isomorphisms
\[
 \operatorname{Lie}J_{\check G}\simeq T^*\fc_{\check G},
 \qquad
 \operatorname{Lie}J_{\check K}\simeq T^*\fc_{\check K}.
\]
For $A=\check G,\check K$, the chosen sections satisfy
\[
 \kappa_A(t^2x)=t^2\operatorname{Ad}_{\tau_A(t)}\kappa_A(x),
\]
where $t^2x$ denotes square dilation on the Chevalley quotient.

The embedding $\check K\to\check G$ gives rise to the map between
Chevalley quotients
\[
 \pi:\fc_{\check K}=\check\fk^*\sslash\check K
 \longrightarrow\check\fg^*\sslash\check G=\fc_{\check G},
\]
and hence to the cotangent map
\[
 \pi^*\operatorname{Lie}J_{\check G}
 \simeq\pi^*T^*\fc_{\check G}
 \longrightarrow T^*\fc_{\check K}
 \simeq\operatorname{Lie}J_{\check K}.
\]
As in \cite{L} and \cite[Proposition 5.4.1]{Gi}, this map is the
differential of a homomorphism
\begin{equation}\label{maps between J}
 \alpha:\pi^*J_{\check G}\longrightarrow J_{\check K}
\end{equation}
of group schemes over $\fc_{\check K}$. Note that $\pi$ is generically
\'etale and the map~\eqref{maps between J} is generically an isomorphism.

Using the above homomorphism, we can form the quotient
\[
(\check G\times J_{\check K})/\pi^*J_{\check G}
 =\bigl((\check G\times\fc_{\check K})
          \times_{\fc_{\check K}}J_{\check K}\bigr)/\pi^*J_{\check G}
\]
by the group scheme $\pi^*J_{\check G}$ over $\fc_{\check K}$.
Its action on the two factors is through the Kostant-section embedding
and the homomorphism~\eqref{maps between J}, respectively; explicitly,
\[
 (g,x,y)\cdot j=(gj^{-1},x,\alpha(j)y).
\]
We have a natural $\check G\times\mathbb G_m$-action on this quotient,
given by
\[
 (h,t)[g,x,y]
 =[hg\tau_{\check G}(t)^{-1},t^2x,
                         \operatorname{Ad}_{\tau_{\check K}(t)}y],
\]
and a $\check G\times\mathbb G_m$-equivariant map
\begin{equation}\label{open moment map}
(\check G\times J_{\check K})/\pi^*J_{\check G}\longrightarrow\check\fg^*,\qquad
 [g,x,y]\longmapsto\operatorname{Ad}_g\kappa_{\check G}(\pi(x)).
\end{equation}
The morphism~\eqref{open moment map} extends to a
$\check G\times\mathbb G_m$-equivariant map
$\overline{(\check G\times J_{\check K})/\pi^*J_{\check G}}\to\check\fg^*$.

\begin{proposition}\label{Wh=J}
There is a $\check G\times\mathbb G_m$-equivariant isomorphism of affine
varieties over $\check\fg^*$
\[
 \overline{T^*_{\psi_{\check K}}(\check G/\check U)}
 \simeq\overline{(\check G\times J_{\check K})/\pi^*J_{\check G}},
\]
with the grading~\eqref{Whittaker grading} on the left.
\end{proposition}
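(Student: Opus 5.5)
The plan is to build an explicit open embedding of $(\check G\times J_{\check K})/\pi^*J_{\check G}$ into $T^*_{\psi_{\check K}}(\check G/\check U)$, or a common open piece of the two. Since both spaces are normal and quasi-affine, such an embedding identifies their affine closures, $\overline{Y}=\operatorname{Spec}\mathcal O(Y)$. This is essentially the argument of \cite{Ga,M}, and we will follow it.

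First, write $T^*_{\psi_{\check K}}(\check G/\check U)=\check G\times^{\check U}(f_{p,q}+\check{\mathfrak b})$. Consider the closed slice $\Sigma=\kappa_{\check K}(\fc_{\check K})+\check{\mathfrak n}_{\check P}\subset f_{p,q}+\check{\mathfrak b}$, where $\check{\mathfrak n}_{\check P}$ is the lower nilradical of $\check P$. By Kostant's theorem for $\check K$, applied blockwise to $f_p+\check{\mathfrak b}_p$ and $f_q+\check{\mathfrak b}_q$, the action map $\check U_{\check K}\times\Sigma\to f_{p,q}+\check{\mathfrak b}$ is an isomorphism. Hence
\[
T^*_{\psi_{\check K}}(\check G/\check U)\simeq\check G\times^{\check U_{\check P}}\Sigma,
\]
where $\check U_{\check P}$ is the unipotent radical of $\check P$. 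This is the generalized Kostant step referred to in Section~\ref{generalized Kostant thm}.

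Next, consider the open locus $\Sigma^{\mathrm{reg}}$ of $x=\kappa_{\check K}(c)+n$ for which $x$ is regular in $\check\fg^*$. On $\Sigma^{\mathrm{reg}}$ there is a unique $g\in\check G$, defined up to $Z_{\check G}(\kappa_{\check G}(\pi(c)))=J_{\check G,\pi(c)}$, with $\operatorname{Ad}_{g}\kappa_{\check G}(\pi(c))=x$. The map will send $[h,\kappa_{\check K}(c)+n]$ to $[hg,c,\cdot]$, with the $J_{\check K}$-coordinate obtained by comparing centralizers. Concretely, $Z_{\check U_{\check P}\check K}(\kappa_{\check K}(c))$ projects onto $J_{\check K,c}$.

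We expect it to be cleaner to go in the other direction, defining
\[
\Phi:(\check G\times J_{\check K})/\pi^*J_{\check G}\longrightarrow\check G\times^{\check U_{\check P}}\Sigma,
\qquad [g,c,y]\longmapsto [g\,u(c,y),\,\kappa_{\check K}(c)+n(c,y)],
\]
where, after a choice of an algebraic section over the regular locus, $u(c,y)\in\check U_{\check P}\check K$ solves $\operatorname{Ad}_{u}\kappa_{\check K}(c)\in\kappa_{\check G}(\pi(c))$-orbit data. For this to descend we need the homomorphism $\alpha$ of \eqref{maps between J}; its defining property, that its differential is the cotangent map of $\pi$, forces compatibility on Lie algebras, and connectedness of regular centralizers in $GL$ upgrades this to groups. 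We then check four things: $\Phi$ is $\check G\times\mathbb G_m$-equivariant, which uses \eqref{Whittaker cocharacter}, namely $\gamma=\tau_{\check K}^2\tau_{\check G}^{-1}$, and the homogeneity $\kappa_A(t^2x)=t^2\operatorname{Ad}_{\tau_A(t)}\kappa_A(x)$; $\Phi$ commutes with the maps to $\check\fg^*$; $\Phi$ is injective; and $\Phi$ is an open immersion with image the regular locus $\mu^{-1}(\check\fg^{*,\mathrm{reg}})$. Injectivity and the image statement follow from the fact that regular elements have smooth, connected centralizers.

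Finally, we pass to affine closures. We show that the complement of the image has codimension at least $2$ in the smooth variety $\check G\times^{\check U_{\check P}}\Sigma$. Since $\Sigma\simeq\fc_{\check K}\times\check{\mathfrak n}_{\check P}$ is an affine space, the non-regular locus of $\check\fg^*$ pulls back to a locus of codimension at least $2$, using that $\mu$ is flat over $\fc_{\check G}$ on this slice. Hartogs' theorem then gives $\mathcal O(T^*_{\psi_{\check K}})=\mathcal O(\text{image})=\mathcal O((\check G\times J_{\check K})/\pi^*J_{\check G})$, compatibly with all the structures, and the isomorphism of affine closures follows. The main obstacle is the codimension-two estimate together with the descent of $\Phi$ through $\alpha$ over non-étale points of $\pi$, where $\pi^*J_{\check G}\to J_{\check K}$ fails to be an isomorphism. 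There we would first try a direct matrix computation on the companion-type slice, in the style of Lemma~\ref{section}.
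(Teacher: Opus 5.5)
Your route differs from the paper's, which does no construction here. The paper quotes Gannon's parabolic theorem $\overline{T^*(\check G/\check U_{\check P})}\simeq\overline{(\check G\times\check K\times\fc_{\check K})/\pi^*J_{\check G}}$ and Whittaker-reduces both sides along $\check U_{\check K}$. For this it uses reduction in stages, \cite[Lemma 3.1.10]{M}, and \cite[Corollary 2.8]{Ga}. Reduction in stages is exactly your blockwise Kostant step $T^*_{\psi_{\check K}}(\check G/\check U)\simeq\check G\times^{\check U_{\check P}}\Sigma$. It is not the generalized Kostant theorem of Section~\ref{generalized Kostant thm}, which concerns $\check U\times^{\check U_X}\Sigma_{p,q}$ and is used for Theorem~\ref{affine closures}.

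Reproving Gannon directly (regular locus, codimension two, Hartogs) is a legitimate plan, but the central step is missing, and your ansatz cannot work. Compatibility with \eqref{open moment map} forces $\operatorname{Ad}_{u}(\kappa_{\check K}(c)+n)=\kappa_{\check G}(\pi(c))$. But $u\in\check U_{\check P}\check K=\check P$ preserves $\operatorname{Lie}\check P$, while $\kappa_{\check G}(\pi(c))\notin\operatorname{Lie}\check P$, since its $(p,p+1)$ entry is $1$. You need a conjugator outside $\check P$, polynomial in $c$ on all of $\fc_{\check K}$, including where $\pi$ is not étale. ``A choice of an algebraic section'' does not supply one.

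In $GL_N$ you can write one down. The element $x_c=\kappa_{\check K}(c)+E_{N,1}\in\Sigma_c$ has cyclic vector $\mathbf e_p$ for every $c$. Its Krylov basis is unitriangular for the ordering $\mathbf e_p,\dots,\mathbf e_1,\mathbf e_N,\dots,\mathbf e_{p+1}$. Likewise $\mathbf e_N$ is cyclic for $\kappa_{\check G}(\pi(c))$. Comparing the two Krylov bases gives $b_c$, polynomial with polynomial inverse, with $\operatorname{Ad}_{b_c}\kappa_{\check G}(\pi(c))=x_c$ and $b_{t^2c}=\gamma(t)b_c\tau_N(t)^{-1}$. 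Then $[g,c,y]\mapsto[gb_c^{-1}y,\operatorname{Ad}_{y^{-1}}x_c]$ is compatible with the maps to $\check\fg^*$ and with the gradings. It descends through $\pi^*J_{\check G}$ acting via $\beta_c(j)=\operatorname{pr}_{\check K}(b_cjb_c^{-1})$. Non-étale points pose no problem for descent, contrary to the obstacle you flag. What actually remains is $\beta=\alpha$, with $\alpha$ as in \eqref{maps between J}. You assert this from the Lie-algebra property, but $d\beta$ must be computed, e.g.\ on the étale locus and then extended by flatness of $\pi^*J_{\check G}$.

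The codimension-two step is also unjustified. Flatness over $\fc_{\check G}$ bounds fibre dimensions but says nothing about where regularity fails inside a fibre. Likewise, your opening claim that an open embedding of normal quasi-affine varieties identifies affine closures is false without a codimension-two complement. With the explicit map above, a correct argument runs as follows:
\begin{enumerate}
\item Injectivity is a direct check. By Zariski's Main Theorem the map is then an open immersion of smooth varieties of dimension $N^2+N$.
\item If the two blocks of $c$ have coprime characteristic polynomials, the Sylvester operator $n\mapsto n\kappa_p(c_1)-\kappa_q(c_2)n$ is invertible. So $\check U_{\check P}$ acts simply transitively on $\Sigma_c$, and $\Sigma_c$ lies in the image.
\item Over the resultant hypersurface, the complement of the image meets each fibre $\Sigma_c$ in a proper closed subset, because the image is open and contains $x_c$.
\end{enumerate}
Hence the complement has codimension at least two, and Hartogs finishes. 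Repaired this way, your argument is an elementary, $GL_N$-specific cyclic-vector proof, in the spirit of Lemma~\ref{q is open}. The paper's citation of Gannon instead works for an arbitrary Levi subgroup.
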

\begin{proof}
According to \cite[Theorem 1.1]{Ga}, there is a
$\check G\times\check K$-equivariant isomorphism over $\check\fg^*$
\[
 \overline{T^*(\check G/\check U_{\check P})}
 \simeq
 \overline{(\check G\times\check K\times\fc_{\check K})/
                     \pi^*J_{\check G}},
\]
where $\check U_{\check P}\subset\check P$ is the unipotent radical.
Taking Whittaker reduction with respect to $\check K$ and
$\psi_{\check K}$, we obtain the desired isomorphism
\[
 \begin{aligned}
 \overline{T^*_{\psi_{\check K}}(\check G/\check U)}
 &\simeq
 \overline{T^*(\check G/\check U_{\check P})
                \sslash_{-d\psi_{\check K}}\check U_{\check K}}\\
 &\simeq
 \overline{(\check G\times J_{\check K})/\pi^*J_{\check G}}.
 \end{aligned}
\]
Here $\check U_{\check K}=\check U\cap\check K$, the differential
is restricted to $\operatorname{Lie}\check U_{\check K}$, and the sign is that
of the right action $g\mapsto gh^{-1}$.
The first isomorphism follows from \cite[Lemma 3.1.10]{M}; the second
is the Whittaker reduction of Gannon's theorem, as in
\cite[Corollary 2.8]{Ga}.
\end{proof}

\subsection{Dual varieties as affine closures}
Here is the main result of the section, providing an alternative
description of the dual variety $\check M$ as an affine closure of
$T^*_{\psi_{\check K}}(\check G/\check U)$.

\begin{theorem}\label{affine closures}
There is a $\check G\times\mathbb G_m$-equivariant isomorphism of affine
varieties over $\check\fg^*$
\[
 \check M\simeq
 \overline{T^*_{\psi_{\check K}}(\check G/\check U)}.
\]
\end{theorem}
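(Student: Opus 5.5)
The plan is to build a $\check G\times\mathbb G_m$-equivariant morphism over $\check\fg^*$,
\[
 \Phi: T^*_{\psi_{\check K}}(\check G/\check U)\longrightarrow \check M ,
\]
show that it is an open embedding with complement of codimension at least two, and then use that $\check M$ is smooth and affine. By Proposition~\ref{induced}, $\check M$ is a vector bundle over a homogeneous affine space, hence smooth and affine. Once these properties hold, restriction of functions gives $\mathcal O(\check M)\simeq\mathcal O(T^*_{\psi_{\check K}}(\check G/\check U))$ by Hartogs, so $\check M$ is the affine closure. Compatibility with the maps to $\check\fg^*$ and with the gradings is inherited from $\Phi$.

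\textbf{Constructing $\Phi$ (the Lagrangian correspondence).} Both spaces are Hamiltonian reductions of $T^*\check G$: the first by $(\check U,-d\psi_{\check K})$, the second by $(\check G_X\check N_X,\psi_X)$ together with $\check S_X\times\check\fn_X/\check\fn_{X,\geq2}$. I would work with the slice models. On the Whittaker side, the generalized Kostant theorem should give
\[
 \check G\times^{\check U}(f_{p,q}+\check{\mathfrak b})\simeq \check G\times^{\check U_{\check P}}\bigl(\kappa_{\check K}(\fc_{\check K})+\check{\mathfrak u}_{\check P}^{\perp\text{-part}}\bigr).
\]
This reduces the structure to a $\check U_{\check P}$-equivariant family over the Kostant slice of $\check K$. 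On the dual side there is the model $\check G\times^{\Sp_{2p}}(\mathfrak p_{2p}\times T^*V\times\fc_{GL_\delta})$, with the explicit matrices \eqref{slodowy blocks}. I would exhibit an explicit element $g_0\in\check G$ conjugating $f_{p,q}+\check{\mathfrak b}$ into block form. Concretely, $g_0$ sends the $GL_p$ and $GL_q$ Kostant blocks into $K_b$, $r(K_b)$ and $f_\delta+s(d)$, using the companion matrices of Lemma~\ref{section}. I would then define $\Phi([g,x])=[g g_0^{-1},\text{normal form of }\operatorname{Ad}_{g_0}x]$ and check that it intertwines $\mu$. The gradings should match because $\gamma$ in \eqref{Whittaker cocharacter} agrees with $\lambda$ up to a cocharacter centralizing $\check G_X$.

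\textbf{Injectivity and the image.} I would compare the two sides over the invariant quotient, via the diagram \eqref{moment quotient}. Over $\fc_{\check K}\simeq\fc_{GL_p}\times\fc_{GL_q}$, the Whittaker side is a $\check G$-torsor under the regular centralizer quotient by Proposition~\ref{Wh=J}. Meanwhile $\check M\sslash\check G\simeq\fc_{GL_p}^2\times\fc_{GL_\delta}$. Identifying the bases is a polynomial change of variables. The polynomial $B(z)=z^p+b_1z^{p-1}+\cdots+b_p$ corresponds to the $GL_p$ factor. The factorization $z^pD(z)-\sum_j c_jz^{p-1-j}$ appearing in the proof of Lemma~\ref{exotic nilpotent cone}, twisted by $B$, recovers the characteristic polynomial of the $GL_q$ block. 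This matches dimensions and degrees: $\dim\fc_{GL_q}=p+\delta$, with degrees $2(j+\delta+1)$ and $2i$. I would show that $\Phi$ lands in the locus where $\check G$ acts on the fiber over a point of the base with regular stabilizer, and that $\Phi$ is an isomorphism onto this regular locus. Both sides are smooth of the same dimension, so it suffices to check that $\Phi$ is bijective on points and étale, and bijectivity can be checked fiberwise over the base via the sections of Lemma~\ref{section}.

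\textbf{Main obstacle.} The hardest step is the codimension estimate: showing that the non-regular locus in $\check M$ has codimension at least two. I would try to prove that the locus where $(T,u,v)$ fails to be cyclic is of codimension at least two in $\mathfrak p_{2p}\times V\times V$. Here cyclic means that $u$ generates $V$ under $T$ together with its $\omega$-partner; for the $\delta$-block this is automatically regular. The natural tool is that $\chi_{p,p}$ is flat with reduced, normal fibers, by equidimensionality \cite{S,Kn2}. Since the regular locus is dense in each fiber, its complement has codimension at least one in each fiber, hence codimension at least one globally. I would strengthen this to codimension two using normality of the exotic fibers, as in Kato's analysis of $\mathcal N'_{\Sp_{2p}}$ \cite{Ka}, together with a direct count on the orbit stratification of the zero fiber, pushed to the other fibers via the $\mathbb G_m$-contraction.
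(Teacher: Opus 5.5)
\textbf{There is a genuine gap.} Your outer frame is the paper's: an equivariant open embedding of $T^*_{\psi_{\check K}}(\check G/\check U)$ into the smooth affine $\check M$, with complement of codimension $\geq 2$, followed by Hartogs. But the embedding $\Phi$ is never actually constructed, and that is where the content lies.

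Ordinary Kostant for $\check K$ only reduces $f_{p,q}+\check\fb$ to a $\check U_{\check P}$-bundle, and $\check U_{\check P}\not\subset\Sp_{2p}$. A fixed $g_0$ followed by a normal form in $\mathcal Z_\delta$ is well defined on $\check G\times^{\check U}(f_{p,q}+\check\fb)$ only if $g_0\check Ug_0^{-1}\subset\Sp_{2p}$. This fails for dimension reasons once $N>2$. Otherwise the normal form must also output a group element depending on $x$, and that is exactly what has to be built.

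The missing idea is that $\check U$ does not act freely on $f_{p,q}+\check\fb$. The relevant subgroup is $\check U_X=\check U\cap\Sp_{2p}$, the $\check U$-stabilizer of $[1,\mathbf e_{2p}]\in\check G\times^{\Sp_{2p}}V$. The paper proves the generalized Kostant theorem $\check U\times^{\check U_X}\Sigma_{p,q}\simeq f_{p,q}+\check\fb$ (Theorem~\ref{whittaker-main}). Here $\Sigma_{p,q}$ is a slice that embeds in $\mathcal Z_\delta$ with $V$-component $\mathbf e_{2p}$; for $p=q$ the embedding is $A+i(w)\mapsto(A,\mathbf e_{2p},w)$. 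Then $\Phi$ is induced by $\check G\times^{\check U_X}\Sigma_{p,q}\to\check G\times^{\Sp_{2p}}\mathcal Z_\delta$. This twisted-conormal description is also what makes $\Phi$ \'etale: it preserves the symplectic forms and the dimensions agree. Your plan leaves étaleness open. A minor point: $\gamma\lambda^{-1}$ is a cocharacter \emph{of} $\Sp_{2p}$, absorbed by the associated-bundle relation, not one centralizing it.

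Your injectivity and image step would also fail as stated. Neither side is homogeneous over the invariant base; Proposition~\ref{Wh=J} concerns only affine closures. For $p=q=1$, the fiber of $T^*(GL_2/\check U)$ over $0\in\fc_{\check K}$ is $\check G/\check U\times\check{\mathfrak u}$. Correspondingly, the zero fiber of $\chi_{\check M}$ meets the image in the orbits of $(0,u,\lambda u)$ with $u\neq 0$, one orbit for each $\lambda$. So the one-point sections of Lemma~\ref{section} cannot detect bijectivity.

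Nor is the image a ``regular'' locus. It is the asymmetric set where $u,xu,\ldots,x^{p-1}u$ are independent. For $p=q$, the involution $(x,u,w)\mapsto(x,w,u)$ of $\mathfrak p_{2p}\times V\times V$ is $\Sp_{2p}$-equivariant and changes $\chi_{p,p}$ only by $c\mapsto -c$. Yet it moves $(x,u,0)$ with $u$ cyclic to $(x,0,u)$, which is outside the image. Points of the image are smooth in their $\chi_{p,p}$-fibers, so such points $(x,0,u)$ are smooth in their fibers too. Hence the complement of the image is not contained in the singular locus of the fibers, and normality of the fibers cannot yield codimension two.

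The paper instead proves injectivity directly: the isotropic flag spanned by $v,xv,\ldots$ determines $k$ modulo $\check U_X$ (Lemma~\ref{q is open}). It gets codimension two by a count: $\operatorname{codim}\ell^{-1}(i)=2(p-i)$ for $\ell(x)=\dim\langle v,xv,\ldots\rangle$ (Lemma~\ref{dimension estimate}), plus codimension $2p$ for the locus $v=0$.
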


The rest of the section is devoted to the proof of
Theorem~\ref{affine closures}. The key ingredients include a
generalization of Kostant's theorem, see
Section~\ref{generalized Kostant thm}, and a Lagrangian correspondence
between $T^*_{\psi_{\check K}}(\check G/\check U)$ and $\check M$, see
Section~\ref{Lag}.

Since the Hamiltonian dual variety $\check M$ is smooth, as a corollary
of Theorem~\ref{affine closures} and Proposition~\ref{Wh=J} we obtain
the following smoothness of the affine closures.
\begin{corollary}\label{cor: aff clos}
The affine closures
\[
 \overline{T^*_{\psi_{\check K}}(\check G/\check U)}
 \simeq\overline{(\check G\times J_{\check K})/\pi^*J_{\check G}}
\]
are smooth.
\end{corollary}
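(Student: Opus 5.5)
The corollary follows from Theorem~\ref{affine closures} and Proposition~\ref{Wh=J}, once we check that $\check M$ is smooth. The plan is to prove that smoothness directly from the explicit presentations above, and then transport it along the isomorphisms.

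\textbf{Smoothness of $\check M$.} By \eqref{vector bundle} and Lemma~\ref{decomp}, $\check M\simeq\check G\times^{\check G_X}\check V_X$. The fiber is $\check V_X\simeq\wedge^2V\times T^*V\times\mathfrak c_{GL_\delta}$, which is an affine space, since $\mathfrak c_{GL_\delta}$ is the affine space of coefficients of monic polynomials. Hence $\check M$ is a vector bundle, forgetting the linear structure on the last factor, over the smooth homogeneous variety $\check G/\check G_X=GL_{p+q}/\Sp_{2p}$. The quotient exists as a smooth variety because $\Sp_{2p}$ is reductive and acts freely on $\check G\times\check V_X$ through the first factor. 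Concretely, $\check G\times\check V_X\to\check M$ is a principal $\Sp_{2p}$-bundle, and smoothness descends along smooth surjections. Proposition~\ref{induced} gives the same conclusion via $GL_{p+q}\times^{GL_{2p}}T^*(GL_{2p}/\Sp_{2p}\times V)\times\mathfrak c_{GL_\delta}$. Moreover, $\check M$ is affine, since $\check G/\check G_X$ is affine by Matsushima and $\check M$ is a vector bundle over it.

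\textbf{Transport.} Theorem~\ref{affine closures} gives an isomorphism of varieties $\check M\simeq\overline{T^*_{\psi_{\check K}}(\check G/\check U)}$, so the right side is smooth. Proposition~\ref{Wh=J} gives $\overline{T^*_{\psi_{\check K}}(\check G/\check U)}\simeq\overline{(\check G\times J_{\check K})/\pi^*J_{\check G}}$, so the latter is smooth as well. Smoothness is intrinsic, so the $\check G\times\mathbb G_m$-equivariance and the compatibility over $\check\fg^*$ play no role here.

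\textbf{Main point.} The argument adds nothing beyond the cited results. The only substantive content is the identification of $\check M$ as a homogeneous bundle with affine-space fiber. One should make sure the fiber description in Lemma~\ref{decomp} is an isomorphism of varieties, not merely a bijection or a statement about gradings. It is: the proof exhibits $\check V_X$ as a linear subspace of $\check\fg$, and $f_\delta+Z_{\mathfrak{gl}_\delta}(e_\delta)$ is an affine linear subspace mapping isomorphically onto $\mathfrak c_{GL_\delta}$.
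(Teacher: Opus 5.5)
Your proposal is correct and follows the paper's argument: the paper likewise obtains the corollary by combining the smoothness of $\check M$ with the isomorphisms of Theorem~\ref{affine closures} and Proposition~\ref{Wh=J}. The only difference is that you justify the smoothness of $\check M$ explicitly, as the homogeneous bundle $\check G\times^{\check G_X}\check V_X$ with affine-space fiber over $GL_{p+q}/\Sp_{2p}$, whereas the paper takes this for granted. Since $\check V_X$ is already a linear subspace, this bundle is in fact a genuine vector bundle.
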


\subsection{Generalized Kostant theorem}\label{generalized Kostant thm}
For $m\geq1$, let $\check B_m\subseteq GL_m$ be the lower triangular Borel,
let $\check U_m$ be its unipotent radical, and write $\check{\mathfrak b}_m$ and
$\check{\mathfrak u}_m$ for their Lie algebras. 
Consider the  regular nilpotent matrices
\[
 e_m=\begin{pmatrix}
 0&&&&\\
 m-1&0&&&\\
 &2(m-2)&0&&\\
 &&\ddots&\ddots&\\
 &&&m-1&0
 \end{pmatrix},\qquad
 f_m=\begin{pmatrix}
 0&1&&&\\
 &0&1&&\\
 &&\ddots&\ddots&\\
 &&&0&1\\
 &&&&0
 \end{pmatrix}.
\]
Put
\[
 \tau_m(t)=\operatorname{diag}(t^{1-m},t^{3-m},\ldots,t^{m-1}).
\]
Thus $(e_m,d\tau_m(1),f_m)$ is a principal $\mathfrak{sl}_2$-triple.
The celebrated result of 
Kostant \cite{K} asserts that $\check U_m$ acts freely on $\check{\mathfrak b}_m+f_m$ and
provides a transverse slice to the $\check U_m$-orbits. More precisely, let
$\mathcal S_m=Z_{\mathfrak{gl}_m}(e_m)\subseteq\check{\mathfrak b}_m$.
Then the action map
\[
 \check U_m\times(\mathcal S_m+f_m)\longrightarrow\check{\mathfrak b}_m+f_m,
 \qquad(u,x)\longmapsto\operatorname{Ad}_u(x),
\]
is a $U_m$-equivariant isomorphism. 

The main result of this section is an analog
of Kostant's theorem in which $f_m$ is replaced by a nilpotent element
with two Jordan blocks. Namely, consider the following nilpotent elements
\[
 e_{p,q}=\begin{pmatrix}e_p&0\\0&e_q\end{pmatrix},\qquad
 f_{p,q}=\begin{pmatrix}f_p&0\\0&f_q\end{pmatrix}.
\]
The second matrix is the trace representative of $d\psi_{\check K}$
used in Section~\ref{sect:whit cot}.
We have a natural adjoint action of $\check U$ on
$\check\fb+f_{p,q}$ but, unlike the setting of Kostant theorem, this action is typically not free. We shall introduce a subgroup $\check U_X\subseteq \check U$ and a
$\check U_X$-invariant affine subspace
$\mathcal S_{p,q}+f_{p,q}\subseteq\check\fb+f_{p,q}$.
The generalized Kostant theorem, see Theorem \ref{whittaker-main} below, says that 
the adjoint action map factors through an isomorphism
\begin{equation}\label{GK}
    \check U\times^{\check U_X}(\mathcal S_{p,q}+f_{p,q})
       \simeq\check\fb+f_{p,q}.
\end{equation}

\subsubsection{Symplectic linear algebra}
We record some linear algebra results needed for the proof of~\eqref{GK}.
We focus on the case $p=q$ and write $\check G=GL_{2p}(\bC)$. Let $V=\bC^{2p}$, and let
$\mathbf e_1,\ldots,\mathbf e_{2p}$ be its standard basis. Equip $V$ with the symplectic form
$\omega(\mathbf e_i,\mathbf e_{2p+1-i})=1$ for $1\leq i\leq p$, the remaining pairings
being determined by alternatingness and zero. Write
\[
 \Omega=\begin{pmatrix}0&J_p\\-J_p&0\end{pmatrix},\qquad
 \check G_X=\Sp_{2p}=\check G^{\theta_X},\qquad
 \theta_X(g)=\Omega^{-1}(g^{\mathrm t})^{-1}\Omega,
\]
where $J_p$ is the reversing matrix of size $p$. The induced involution on
$\check\fg$ gives $\check\fg=\check\fg_X\oplus\mathfrak p_{2p}$, with
$\check\fg_X=\check\fg^{\theta_X}$ and $\mathfrak p_{2p}=\check\fg^{-\theta_X}$.
If $r(A)=J_pA^{\mathrm t}J_p$, then
\[
 \theta_X\begin{pmatrix}A&B\\C&D\end{pmatrix}
 =\begin{pmatrix}-r(D)&r(B)\\r(C)&-r(A)\end{pmatrix}.
\]
Let $\check B$ be the lower triangular Borel, $\check U$ its unipotent radical, and
$\check T$ the diagonal torus. These subgroups are $\theta_X$-stable. Put
$\check U_X=\check U\cap\check G_X=\check U^{\theta_X}$ and
$\check{\mathfrak u}_X=\operatorname{Lie}(\check U_X)$. Let $\check B^-$
be the opposite Borel and $\check U^-$ its unipotent radical. Write
$V=W_1\oplus W_2$ for the two coordinate subspaces of dimension $p$.
Let $\check P\supseteq \check B$ be the parabolic stabilizing $W_2$, let $\check U_{\check P}$ be its
unipotent radical, and let $\check K=GL_p\times GL_p$ be its Levi subgroup
preserving $W_1$ and $W_2$.

Using the weighted matrices fixed above, define
\[
 e=\begin{pmatrix}e_p&0\\0&e_p\end{pmatrix},\qquad
 f=\begin{pmatrix}f_p&0\\0&f_p\end{pmatrix}
   =f_{p,p},\qquad
 h=[e,f].
\]
Then $(e,h,f)$ is a principal $\mathfrak{sl}_2$-triple in
$\check\fk=\operatorname{Lie}(\check K)$. The element $h$ is the differential
of the cocharacter $\operatorname{diag}(\tau_p,\tau_p)$, which is the
sum of the positive coroots of $\check K$. The block formula for $\theta_X$ shows that $e,f\in\mathfrak p_{2p}$ and hence
$h\in\check\fg_X$. Consequently,
$[f,\check{\mathfrak u}\cap\mathfrak p_{2p}]\subseteq\check\fb\cap\check\fg_X
=\check\fb^{\theta_X}$.

\Needspace{5\baselineskip}
\begin{lemma}\label{adf injectivity}
The map $\operatorname{ad}_f:\check{\mathfrak u}\cap\mathfrak p_{2p}\to
\check\fb^{\theta_X}$ is injective.
\end{lemma}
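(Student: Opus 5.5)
The plan is to identify the kernel of $\operatorname{ad}_f$ explicitly inside all of $\check{\mathfrak u}$, and then show that the $\mathfrak p_{2p}$-condition kills what remains. Write an element of $\check{\mathfrak u}\cap\mathfrak p_{2p}$ in block form
\[
 y=\begin{pmatrix}A&B\\C&D\end{pmatrix},
\]
with blocks of size $p$ adapted to $V=W_1\oplus W_2$. Since $\check{\mathfrak u}$ consists of strictly lower triangular matrices, we have $B=0$, $A$ and $D$ are strictly lower triangular, and $C$ is arbitrary. By the block formula for $\theta_X$, the condition $\theta_X(y)=-y$ reads
\[
 D=r(A),\qquad r(B)=-B,\qquad r(C)=-C.
\]

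Suppose $[f,y]=0$. Since $f=\operatorname{diag}(f_p,f_p)$, the condition splits into the four block equations
\[
 f_pA=Af_p,\quad f_pB=Bf_p,\quad f_pC=Cf_p,\quad f_pD=Df_p.
\]
The matrix $f_p$ is a single regular nilpotent Jordan block, namely the upper shift. Its centralizer in $\mathfrak{gl}_p$ is therefore the algebra of polynomials in $f_p$, that is, the upper triangular Toeplitz matrices $\sum_k a_kf_p^k$.

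Apply this to each block. A strictly lower triangular matrix that is also upper triangular Toeplitz must vanish, so $A=D=0$. For the off-diagonal block we get $C=\sum_{k\ge0}a_kf_p^k$.

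It remains to use $r(C)=-C$. Transposition turns the upper shift into the lower shift, and conjugation by the reversing matrix $J_p$ turns it back, so $r(f_p)=J_pf_p^{\mathrm t}J_p=f_p$. Since $r$ is an anti-automorphism, it follows that $r(f_p^k)=f_p^k$ and hence $r(C)=C$. Combined with $r(C)=-C$, this gives $C=0$. Therefore $y=0$, and $\operatorname{ad}_f$ is injective on $\check{\mathfrak u}\cap\mathfrak p_{2p}$.

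The only step needing care is the sign and orientation bookkeeping: checking that $r$ fixes $f_p$ rather than negating it, and that the lower-left block is indeed unconstrained by $\check{\mathfrak u}$ while the upper-right block is forced to vanish. I would verify $r(f_p)=f_p$ directly on matrix entries: $f_p$ has entries $(i,i+1)$, and $r$ sends $(i,j)$ to $(p+1-j,p+1-i)$, which maps $(i,i+1)$ to $(p-i,p+1-i)$, again a superdiagonal position.
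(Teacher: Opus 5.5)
Your proposal is correct and is essentially the paper's argument. The paper likewise kills the diagonal blocks using that $f$ is regular nilpotent in $\check\fk$ (phrased as projecting to $\check\fk$). It then identifies the surviving lower-left block with some $C\in\bC[f_p]$ via the adjoint action of the diagonal $\check K_\Delta$ on $\operatorname{Hom}(W_1,W_2)$, and concludes from $r(C)=C$ that the element is $\theta_X$-fixed and hence zero. Your block computation, including the entrywise check that $r(f_p)=f_p$, is a more explicit version of the same steps.
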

\begin{proof}
If $x\in\check{\mathfrak u}\cap\mathfrak p_{2p}$ satisfies $[f,x]=0$, then its image
under the projection $\pi:\operatorname{Lie}(\check P)\twoheadrightarrow\check\fk$
does the same. But $\pi(x)$ belongs to $\check{\mathfrak u}\cap\check\fk$
and $f\in\check{\mathfrak u}^-\cap\check\fk$ is a regular nilpotent in
$\check\fk$, so $\pi(x)=0$. Therefore $x\in\operatorname{Lie}(\check U_{\check P})$.

We have a $\check P$-equivariant isomorphism
$\operatorname{Lie}(\check U_{\check P})\simeq\operatorname{Hom}(V/W_2,W_2)$ and therefore an
$\check K$-equivariant isomorphism
$\operatorname{Lie}(\check U_{\check P})\simeq\operatorname{Hom}(W_1,W_2)$. Let
$\check K_\Delta\subseteq \check K$ be the diagonal subgroup. Then $W_1\simeq W_2$
as $\check K_\Delta$-modules, and $\operatorname{Lie}(\check U_{\check P})$ is isomorphic to the adjoint
representation of $\check K_\Delta$. Observe that $f\in\check\fk_\Delta$
is principal nilpotent in this reductive subalgebra. Hence its centralizer
is spanned by its powers. Thus the kernel of $\operatorname{ad}_f$ on
$\operatorname{Lie}(\check U_{\check P})$ consists of matrices
\[
 x=\begin{pmatrix}0&0\\C&0\end{pmatrix},\qquad f_p C=C f_p,
\]
where $C\in\bC[f_p]$ is upper triangular and constant along each diagonal.
But then $r(C)=J_pC^{\mathrm t}J_p=C$, so $x$ is $\theta_X$-invariant.
Since $x\in\mathfrak p_{2p}$, we conclude that $x=0$.
\end{proof}

Now we will construct a complement to the image
$\operatorname{ad}_f(\check{\mathfrak u}\cap\mathfrak p_{2p})$ in
$\check\fb^{\theta_X}$. We have the moment map
$V\times V^*\to\check\fg_X^*$. Using the trace form, we identify
$\check\fg_X$ with $\check\fg_X^*$. We also use
$w\mapsto\omega(w,-)$ to make the $\check G_X$-equivariant identification
$V\simeq V^*$. We obtain a $\check G_X$-equivariant symmetric bilinear pairing
\begin{equation}\label{eq:slice-beta}
 \beta(v,w)\in\check\fg_X,\qquad
 \beta(v,w)(z)=\tfrac12\bigl(\omega(w,z)v+\omega(v,z)w\bigr).
\end{equation}
We define a map $i:V\to\check\fg_X$ by $i(w)=\beta(\mathbf e_{2p},w)$.
Since $\mathbf e_{2p}$ is $\check U$-invariant, $i$ is $\check U_X$-equivariant.
The map $i$ is injective, because $\beta$ identifies
$\operatorname{Sym}^2V$ with $\check\fg_X$. Let $\mathfrak v=i(V)$
be its image. It is a $\check U_X$-stable subspace of $\check\fg_X$ and
consists of the $\theta_X$-fixed matrices supported in the first column
and last row. In particular, $\mathfrak v\subseteq\check\fb^{\theta_X}$.

\begin{lemma}\label{slice-intersection}
$\mathfrak v\cap[f,\check{\mathfrak u}\cap\mathfrak p_{2p}]=0$.
\end{lemma}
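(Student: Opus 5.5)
The plan is to take $x\in\check{\mathfrak u}\cap\mathfrak p_{2p}$ with $[f,x]\in\mathfrak v$ and prove directly that $x=0$, so that in particular $[f,x]=0$. I would not use any weight argument. Instead I would use the explicit entrywise formula for $\operatorname{ad}_f$. Because $f=\operatorname{diag}(f_p,f_p)$ has ones only on the superdiagonal inside each block, for a matrix $y$ of size $p$ we have
\[
([f_p,y])_{ij}=y_{i+1,j}-y_{i,j-1},
\]
with the conventions $y_{p+1,j}=0$ and $y_{i,0}=0$. The hypothesis $[f,x]\in\mathfrak v$ says that $[f,x]$ vanishes outside the first column and the last row. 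Blockwise, write $x=\begin{pmatrix}y_1&0\\C&y_2\end{pmatrix}$ with $y_1,y_2$ strictly lower triangular; the upper right block is zero because $x$ is lower triangular. Then:
\begin{enumerate}
\item $[f_p,y_1]$ is supported in its first column;
\item $[f_p,y_2]$ is supported in its last row;
\item $[f_p,C]$ is supported in its first column and last row.
\end{enumerate}

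\emph{Step 1: the diagonal blocks vanish.} This is a Kostant/companion-slice statement: the first-column matrices, and likewise the last-row matrices, form a complement to $[f_p,\check{\mathfrak u}_p]$ in $\check{\mathfrak b}_p$. I would prove only the part needed, by recursion. For $y_1$, the formula gives $y_{i+1,j}=y_{i,j-1}$ for $i<p$ and $j>1$. So $y_1$ is constant along diagonals, with boundary values coming from its first row, which is zero since $y_1$ is strictly lower triangular, and from the zero column $j=0$. Hence $y_1=0$. For $y_2$ the same recursion applies, run from the opposite corner using the last-row condition, and gives $y_2=0$. This is the same reasoning that appears in the first paragraph of the proof of Lemma~\ref{adf injectivity}.

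\emph{Step 2: the off-diagonal block vanishes.} Now $x$ lies in $\operatorname{Lie}(\check U_{\check P})$. The block formula for $\theta_X$ shows that $x\in\mathfrak p_{2p}$ means $r(C)=-C$.
\begin{itemize}
\item Condition (3) on $[f_p,C]$ gives $C_{i+1,j}=C_{i,j-1}$ whenever $i<p$ and $j>1$. Hence $C$ is constant along each diagonal, i.e.\ Toeplitz, with no triangularity constraint.
\item Every Toeplitz matrix is persymmetric: its entries depend only on $i-j$, and the antidiagonal reflection $(i,j)\mapsto(p+1-j,p+1-i)$ preserves $i-j$. Therefore $r(C)=C$.
\item Combining this with $r(C)=-C$ forces $C=0$, so $x=0$.
\end{itemize}
This also matches the paper's argument in Lemma~\ref{adf injectivity}, where the kernel consisted of $C\in\bC[f_p]$ with $r(C)=C$. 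As a consistency check, $r(f_p)=f_p$ implies $r([f_p,C])=[f_p,C]$ when $r(C)=-C$. So the $\theta_X$-fixedness built into $\mathfrak v$ imposes no extra condition and is not needed.

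\emph{Main obstacle.} The only delicate point is Step 2. Unlike the diagonal blocks, $C$ has no vanishing first row, so the recursion by itself does not kill $C$: it only shows $C$ is Toeplitz. The argument therefore must use the antisymmetry $r(C)=-C$ coming from $\mathfrak p_{2p}$, and I would check the index conventions of $r$ carefully at exactly this point. The diagonal-block recursions are routine boundary-condition bookkeeping.
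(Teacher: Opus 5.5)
Your overall structure and your Step~2 match the paper's proof: the support condition on $[f_p,C]$ makes $C$ Toeplitz, Toeplitz matrices satisfy $r(C)=C$, and $r(C)=-C$ finishes. Your Step~1 uses a different and legitimate technique, but its argument for $y_1$ does not work as written.

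\textbf{How the paper handles the diagonal blocks.} Since $x\in\mathfrak p_{2p}$ forces $y_2=r(y_1)$, the paper writes $x=\begin{pmatrix}A&0\\C&r(A)\end{pmatrix}$ and only needs to kill $A$. It uses the trace pairing: $\operatorname{tr}(f_p^k[f_p,A])=0$ for all $k$, while writing $[f_p,A]=d\varepsilon_1^{\mathrm t}$ gives $\operatorname{tr}(f_p^k d\varepsilon_1^{\mathrm t})=d_{k+1}$. Hence $[f_p,A]=0$, so $A\in\bC[f_p]$ is upper triangular and therefore zero. Your entrywise recursion can replace this.

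\textbf{The error in your $y_1$ argument.} Condition (1) leaves the first column of $[f_p,y_1]$ unconstrained, so the equation $y_{i+1,1}=y_{i,0}=0$ is not available. The relations $y_{i+1,j}=y_{i,j-1}$ for $i<p$, $j>1$ only show that $y_1$ is a strictly lower triangular Toeplitz matrix. For instance, for $p\geq2$ the matrix $f_p^{\mathrm t}$ satisfies all of them, yet $[f_p,f_p^{\mathrm t}]=E_{11}-E_{pp}$ is not supported in the first column.

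\textbf{The fix.} What kills $y_1$ are the equations you dropped, those with $i=p$: $[f_p,y_1]_{p,j}=-y_{p,j-1}=0$ for $j\geq2$. So the last row of $y_1$ vanishes in columns $1,\ldots,p-1$, and every subdiagonal meets that row. The boundary you invoked for $y_1$ (the first row together with the zero column $j=0$) is the correct one for $y_2$, where condition (2) does impose the $j=1$ equations. In effect you have the two boundary arguments swapped.

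\textbf{A misattributed reference.} The first paragraph of Lemma~\ref{adf injectivity} treats the case $[f,x]=0$ outright, not the weaker first-column-support hypothesis here. So it is not the same reasoning as your Step~1.

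With the $y_1$ repair, or by running the first-row/zero-column recursion on $y_2$ and then using $y_1=r(y_2)$, your proof is complete.
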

\begin{proof}
Write
\[
x=\begin{pmatrix}A&0\\C&r(A)\end{pmatrix}\in\check{\mathfrak u}\cap\mathfrak p_{2p},
 \]
where $A$ is  strictly lower triangular
and $r(C)=-C$.
Suppose $[f,x]\in\mathfrak v$. Its upper-left block is supported in
the first column, say $[f_p,A]=d\varepsilon_1^{\mathrm t}$, where
$\varepsilon_1=(1,0,\ldots,0)^{\mathrm t}\in\mathbb C^p$. For $0\leq k<p$,
\[
 0=\operatorname{tr}(f_p^k[f_p,A])=\varepsilon_1^{\mathrm t}f_p^kd=d_{k+1}.
\]
Hence $[f_p,A]=0$ and $A=0$. The lower-left block $[f_p,C]$ is supported in
its first column and last row, so
$C_{a+1,b}=C_{a,b-1}$ for $a<p$ and $b>1$.
Thus $C$ is constant along each diagonal, which implies $r(C)=C$.
Together with $r(C)=-C$ this gives $C=0$.
\end{proof}

\begin{corollary}\label{btheta decomposition}
$\check\fb^{\theta_X}=\mathfrak v\oplus[f,\check{\mathfrak u}\cap\mathfrak p_{2p}]$.
\end{corollary}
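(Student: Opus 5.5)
The corollary follows from Lemma~\ref{adf injectivity} and Lemma~\ref{slice-intersection} once we check dimensions. Both lemmas take place inside $\check\fb^{\theta_X}$: we have $\mathfrak v\subseteq\check\fb^{\theta_X}$, and $[f,\check{\mathfrak u}\cap\mathfrak p_{2p}]\subseteq\check\fb^{\theta_X}$ as noted before Lemma~\ref{adf injectivity}. By Lemma~\ref{slice-intersection} the sum $\mathfrak v+[f,\check{\mathfrak u}\cap\mathfrak p_{2p}]$ is direct. By Lemma~\ref{adf injectivity} the second summand has dimension $\dim(\check{\mathfrak u}\cap\mathfrak p_{2p})$, and $\dim\mathfrak v=\dim V=2p$ because $i$ is injective. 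So it suffices to prove
\[
 \dim\check\fb^{\theta_X}=2p+\dim(\check{\mathfrak u}\cap\mathfrak p_{2p}).
\]

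We compute both sides using the $\theta_X$-stable decomposition $\check\fb=\check{\mathfrak t}\oplus\check{\mathfrak u}$. First, $\check\fb^{\theta_X}=\check{\mathfrak t}^{\theta_X}\oplus\check{\mathfrak u}^{\theta_X}$. From the block formula for $\theta_X$, the torus part satisfies $\theta_X(\operatorname{diag}(a,d))=\operatorname{diag}(-r(d),-r(a))$. Hence $\dim\check{\mathfrak t}^{\theta_X}=p$, and also $\dim(\check{\mathfrak t}\cap\mathfrak p_{2p})=p$.

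Second, $\dim\check{\mathfrak u}^{\theta_X}=\dim\check{\mathfrak u}_X$ is the number of positive roots of $\Sp_{2p}$, namely $p^2$. This uses that $\check B^{\theta_X}$ is a Borel of $\check G_X$ with maximal torus $\check T^{\theta_X}$; that holds since $\check T^{\theta_X}$ contains a regular element of $\check{\mathfrak g}$, e.g.\ $h$. Therefore
\[
 \dim(\check{\mathfrak u}\cap\mathfrak p_{2p})=\dim\check{\mathfrak u}-p^2=p(2p-1)-p^2=p^2-p .
\]

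Putting these together,
\[
 \dim\check\fb^{\theta_X}=p+p^2=2p+(p^2-p),
\]
which is the required equality, so the direct sum exhausts $\check\fb^{\theta_X}$.

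The only step needing care is the count $\dim\check{\mathfrak u}^{\theta_X}=p^2$. If the Borel argument feels too quick, we can instead count directly: $\theta_X$ acts on the root spaces of $\check{\mathfrak u}$ by a sign times a permutation of order at most two. The fixed subspace then has dimension equal to the number of $2$-cycles plus the number of fixed roots with sign $+1$. From $\theta_X(E_{ij})=\pm E_{2p+1-j,\,2p+1-i}$ one finds $p^2-p$ two-cycles and $p$ fixed roots $E_{2p+1-i,\,i}$, each with sign $+$. This gives $p^2$.
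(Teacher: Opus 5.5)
Your proof is correct and is essentially the paper's argument: directness from Lemma~\ref{slice-intersection}, $\dim[f,\check{\mathfrak u}\cap\mathfrak p_{2p}]=\dim(\check{\mathfrak u}\cap\mathfrak p_{2p})$ from Lemma~\ref{adf injectivity}, and a count showing both sides equal $p^2+p$. The only difference is bookkeeping. The paper uses $\dim\check\fb^{\theta_X}=(\dim\check\fg_X+\operatorname{rank}\check\fg_X)/2$, which tacitly uses that $\check\fb^{\theta_X}$ is a Borel subalgebra of $\check\fg_X$, together with a block count $\dim(\check{\mathfrak u}\cap\mathfrak p_{2p})=p(p-1)$. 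Your root-space count $\dim\check{\mathfrak u}^{\theta_X}=p^2$ gives both numbers at once without that input.

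One correction to your side remark: $h=\operatorname{diag}(d\tau_p,d\tau_p)$ is not regular. Each of its eigenvalues occurs twice, and it is not regular in $\check\fg_X$ either. Moreover, regularity alone would not yield the Borel claim. What justifies it is your direct count, or else a $\check B$-dominant regular element of $\check{\mathfrak t}^{\theta_X}$ such as $\operatorname{diag}(-p,\ldots,-1,1,\ldots,p)$, combined with the $\theta_X$-stability of $\check B$.
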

\begin{proof}
By Lemma~\ref{slice-intersection}, it now suffices to count dimensions.
On one hand,
\[
 \dim\check\fb^{\theta_X}
 =\frac{\dim\check\fg_X+\operatorname{rank}\check\fg_X}{2}
 =\frac{p(2p+1)+p}{2}=p^2+p.
\]
On the other hand, by the injectivity of $\operatorname{ad}_f$ on
$\check{\mathfrak u}\cap\mathfrak p_{2p}$ and the injectivity of $i$, we have
$\dim\mathfrak v=2p$ and
$\dim[f,\check{\mathfrak u}\cap\mathfrak p_{2p}]=\dim(\check{\mathfrak u}\cap\mathfrak p_{2p})$.
In the block form of Lemma~\ref{slice-intersection}, the strictly lower
triangular block $A$ contributes $p(p-1)/2$ parameters. The condition
$r(C)=-C$ says that $J_pC$ is skew-symmetric, so $C$ contributes
another $p(p-1)/2$ parameters. Thus
\[
 \dim(\check{\mathfrak u}\cap\mathfrak p_{2p})=p(p-1),\qquad
 2p+p(p-1)=p^2+p.\qedhere
\]
\end{proof}

\subsubsection{Construction of the slice}
Let $\delta=q-p$. In what follows, we need to make use of the Levi
subgroup and subalgebra given by the block embeddings
\[
 \begin{gathered}
 GL_{2p}\times GL_\delta=
 \begin{pmatrix}GL_{2p}&0\\0&GL_\delta\end{pmatrix}\subseteq \check G,\\[4pt]
 \mathfrak{gl}_{2p}\oplus\mathfrak{gl}_\delta=
 \begin{pmatrix}\mathfrak{gl}_{2p}&0\\0&\mathfrak{gl}_\delta\end{pmatrix}
 \subseteq\mathfrak{gl}_N,
 \end{gathered}
\]
with the $\delta$-block omitted when $\delta=0$.
Let $\check G_X=\Sp_{2p}\subseteq GL_{2p}$ and $\check\fg_X=\operatorname{Lie}(\check G_X)$.
We use the involution $\theta_X$ of $GL_{2p}$ defining $\check G_X=GL_{2p}^{\theta_X}$
and the Cartan decomposition
$\mathfrak{gl}_{2p}=\check\fg_X\oplus\mathfrak p_{2p}$ introduced above.
Here $\check U_{2p}=\check U\cap\check L'_X$, and $\check{\mathfrak b}_{2p}$
is the lower triangular subalgebra of the $2p$-block. The subgroup
$\check U_X\subseteq\check U$ is defined by
\[
 \check U_X=\check U_{2p}^{\theta_X}\subseteq \check U,\qquad
 u\longmapsto\begin{pmatrix}u&0\\0&I_\delta\end{pmatrix}.
\]

The construction of $\mathcal S_{p,q}$ is uniform for $p\leq q$.
Put
\[
 \mathcal S^0_{p,p}=\check{\mathfrak b}_{2p}\cap\mathfrak p_{2p},\qquad
 \mathcal S_\delta=Z_{\mathfrak{gl}_\delta}(e_\delta),
\]
where $\mathcal S_\delta$ is defined for $\delta>0$ and its summand is
omitted when $\delta=0$. We regard these as subspaces of
$\mathfrak{gl}_N$ through the block embeddings above. Let
\[
 V^0_{p,q}=\{x\in\mathfrak{gl}_N:x_{ab}=0
                         \text{ if }a<N\text{ or }b>2p\}.
\]
We define
\[
 \mathcal S_{p,q}=\mathcal S^0_{p,p}\oplus\mathcal S_\delta\oplus V^0_{p,q},
 \qquad \Sigma_{p,q}=f_{p,q}+\mathcal S_{p,q}.
\]

When $\delta=0$, the slice has the following alternative description. Put
$\operatorname{Av}_{\theta_X}=(1+\theta_X)/2$. This projection identifies
$V^0_{p,p}$ with $\mathfrak v$: its image lies in $\mathfrak v$, and it
is injective on last-row matrices, because $\theta_X$ carries the last
row to the first column, whose intersection with that row is spanned
by the $\theta_X$-fixed matrix $E_{2p,1}$. Since both spaces have
dimension $2p$, the image is all of $\mathfrak v$. For $w\in V^0_{p,p}$,
we have
\[
 w-\operatorname{Av}_{\theta_X}(w)
 =\tfrac12(w-\theta_X(w))\in\mathcal S^0_{p,p}.
\]
The same injectivity gives $\mathcal S^0_{p,p}\cap V^0_{p,p}=0$.
Consequently,
\begin{equation}\label{S}
    \mathcal S_{p,p}=\mathcal S^0_{p,p}\oplus V^0_{p,p}
 =\mathcal S^0_{p,p}\oplus\mathfrak v.
\end{equation}
 For $\delta>0$, let $\varepsilon_1,\ldots,\varepsilon_\delta$ be the
standard basis of $\bC^\delta$ and set $E=\mathbf e_{2p}\varepsilon_1^{\mathrm t}$.
The matrices in $\Sigma_{p,q}$ have the form
\begin{equation}\label{eq:general-slice-matrix}
 \begin{pmatrix}A&E\\\varepsilon_\delta c&f_\delta+s\end{pmatrix},
 \quad A\in f_{p,p}+\mathcal S^0_{p,p},\quad
 c\in(\bC^{2p})^*,\quad s\in\mathcal S_\delta.
\end{equation}
Since $\check U_X$ fixes $\mathbf e_{2p}$, this description shows that
$\Sigma_{p,q}$ is $\check U_X$-stable when $\delta>0$. For $\delta=0$,
this follows from the stability of $\mathfrak v$ and
$\operatorname{Ad}_{\check U_X}(f_{p,p})-f_{p,p}\subseteq\mathcal S^0_{p,p}$.

\begin{example}\label{example:slices}
Consider the case $p=q=2$. We have $\delta=0$ and
\[
 \mathcal S^0_{2,2}=\left\{
 \begin{pmatrix}
 a&0&0&0\\
 b&c&0&0\\
 u&0&c&0\\
 0&-u&b&a
 \end{pmatrix}:a,b,c,u\in\bC\right\},
 \qquad
 V^0_{2,2}=\left\{
 \begin{pmatrix}
 0&0&0&0\\
 0&0&0&0\\
 0&0&0&0\\
 x_1&x_2&x_3&x_4
 \end{pmatrix}:x_i\in\bC\right\}.
\]
Hence
\[
 \mathcal S_{2,2}=\left\{
 \begin{pmatrix}
 a&0&0&0\\
 b&c&0&0\\
 u&0&c&0\\
 v_1&v_2&v_3&v_4
 \end{pmatrix}:a,b,c,u,v_i\in\bC\right\},
\]
and therefore
\[
 \Sigma_{2,2}=f_{2,2}+\mathcal S_{2,2}=\left\{
 \begin{pmatrix}
 a&1&0&0\\
 b&c&0&0\\
 u&0&c&1\\
 v_1&v_2&v_3&v_4
 \end{pmatrix}:a,b,c,u,v_i\in\bC\right\}.
\]
We have
\[
 \check U_X=\left\{
 \begin{pmatrix}
 1&0&0&0\\
 \alpha&1&0&0\\
 \beta&\gamma&1&0\\
 \eta&\beta-\alpha\gamma&-\alpha&1
 \end{pmatrix}:\alpha,\beta,\gamma,\eta\in\bC\right\}.
\]

\end{example}

\subsubsection{Proof of the generalized Kostant theorem}
In this section
we establish the generalized Kostant theorem:

\begin{theorem}\label{whittaker-main}
For $1\leq p\leq q$, the action map
\[
 \check U\times\Sigma_{p,q}\longrightarrow\check\fb+f_{p,q},
 \qquad(u,x)\longmapsto\operatorname{Ad}_u(x),
\]
induces an isomorphism
\[
 \check U\times^{\check U_X}\Sigma_{p,q}
       \xrightarrow{\ \sim\ }\check\fb+f_{p,q}.
\]
\end{theorem}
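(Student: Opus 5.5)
The plan is the standard Kostant argument: an infinitesimal (tangent-level) statement at the base point, upgraded to a global isomorphism by contracting $\mathbb G_m$-actions. First we check the map is well defined, i.e. that $\Sigma_{p,q}$ is $\check U_X$-stable, which was noted just before Example~\ref{example:slices}. Next we equip both sides with compatible contracting $\mathbb G_m$-actions. We take $t\cdot x=t^2\operatorname{Ad}_{\gamma'(t)}x$ for a cocharacter $\gamma'$ of $\check T$ (for instance $\gamma$ from \eqref{Whittaker cocharacter}, adjusted on the $\delta$-block) for which $f_{p,q}$ has weight $0$. We need $\check\fb$ and $\mathcal S_{p,q}$ to have strictly positive weights, $\check U_X$ and $\Sigma_{p,q}$ to be preserved, and $\check U$ to be acted on by conjugation. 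Then $\check\fb+f_{p,q}$ and $\check U\times^{\check U_X}\Sigma_{p,q}$ are affine spaces (the latter a vector bundle over the affine space $\check U/\check U_X$) with actions contracting to the base point $[1,f_{p,q}]$, and the action map is equivariant. By the standard lemma (a $\mathbb G_m$-equivariant morphism of smooth affine varieties with contracting actions, étale at the fixed point, is an isomorphism), it suffices to show the differential at $[1,f_{p,q}]$ is an isomorphism.

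The differential at the base point is $\check{\mathfrak u}/\check{\mathfrak u}_X\oplus\mathcal S_{p,q}\to\check\fb$, $(y,s)\mapsto[y,f_{p,q}]+s$. So the claim reduces to the linear statement
\[
 \check\fb=\operatorname{ad}_{f_{p,q}}(\mathfrak m)\oplus\mathcal S_{p,q},\qquad \operatorname{ad}_{f_{p,q}}\ \text{injective on a complement }\mathfrak m\ \text{of}\ \check{\mathfrak u}_X\ \text{in}\ \check{\mathfrak u}.
\]
A dimension count gives $\dim\check{\mathfrak u}-\dim\check{\mathfrak u}_X+\dim\mathcal S_{p,q}=\dim\check\fb$. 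This uses $\dim\check{\mathfrak u}_X=p^2$, $\dim\mathcal S^0_{p,p}=p^2$ (namely $p(p-1)+p$ from $\check{\mathfrak u}\cap\mathfrak p_{2p}$ plus the diagonal), $\dim\mathcal S_\delta=\delta$ and $\dim V^0_{p,q}=2p$. Hence it is enough to prove injectivity of $\check{\mathfrak u}/\check{\mathfrak u}_X\oplus\mathcal S_{p,q}\to\check\fb$.

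For $\delta=0$ we work $\theta_X$-eigenspace by eigenspace, using that $\operatorname{ad}_f$ swaps $\check{\mathfrak u}\cap\check\fg_X$ into $\check\fb\cap\mathfrak p_{2p}$ and $\check{\mathfrak u}\cap\mathfrak p_{2p}$ into $\check\fb^{\theta_X}$. Since $\mathfrak m$ can be taken to be $\check{\mathfrak u}\cap\mathfrak p_{2p}$, Corollary~\ref{btheta decomposition} gives exactly $\check\fb^{\theta_X}=\mathfrak v\oplus[f,\check{\mathfrak u}\cap\mathfrak p_{2p}]$. The $\mathfrak p_{2p}$-part of $\check\fb$ is then covered by $\mathcal S^0_{p,p}=\check\fb\cap\mathfrak p_{2p}$ directly. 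Then \eqref{S} rewrites $\mathcal S^0_{p,p}\oplus\mathfrak v$ as $\mathcal S_{p,p}$. Care is needed because $V^0_{p,p}$ is not $\theta_X$-homogeneous, but \eqref{S} handles exactly this.

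For $\delta>0$ we decompose $\check\fb$ into blocks: the $2p$-block, the $\delta$-block, and the lower-left block $M_{\delta,2p}$. The complement $\mathfrak m$ contains the $\delta$-block and the lower-left block of $\check{\mathfrak u}$ in addition to $\check{\mathfrak u}_{2p}\cap\mathfrak p_{2p}$. We filter by the block-lower-triangular structure and treat the associated graded pieces separately. On the $2p$-block we use the argument above but with $\mathfrak v$ absent. On the $\delta$-block, classical Kostant gives $\check\fb_\delta=[\check{\mathfrak u}_\delta,f_\delta]\oplus\mathcal S_\delta$. On the lower-left block $M_{\delta,2p}$, the map $y\mapsto f_\delta y-y f_{p,p}$ has cokernel complemented by the last row $V^0_{p,q}$; this is a Sylvester-type computation whose solvability holds because the two nilpotents have no common structure beyond the kernels.

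The main obstacle I expect is this lower-left block computation, together with verifying that the weights of the chosen $\gamma'$ are positive on all of $\mathcal S_{p,q}$ (in particular on $V^0_{p,q}$), so that the contraction argument applies. If positivity fails for $\gamma$, I would instead choose $\gamma'=\tau_{N}$-type adjusted cocharacters block by block and check directly.
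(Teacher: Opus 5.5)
Your global strategy is the paper's: an infinitesimal statement at $(1,f_{p,q})$, then a contracting $\mathbb G_m$-action and a polynomial-inverse argument. Your $\delta=0$ case is exactly the paper's combination of Lemma~\ref{adf injectivity}, Corollary~\ref{btheta decomposition} and \eqref{S}. The positivity worry is resolved by using $x\mapsto t^2\operatorname{Ad}_{\tau_N(t)}x$, as the paper does; $\gamma$ itself is not positive on $\check\fb$ (for $p=q=2$ the entry $(3,2)$ gets weight $-4$).

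\textbf{The gap is the case $\delta>0$.} For the $2p\mid\delta$ splitting, $f_{p,q}$ is not block diagonal:
\[
f_{p,q}=\begin{pmatrix}f_{p,p}&E\\0&f_\delta\end{pmatrix},\qquad E=\mathbf e_{2p}\varepsilon_1^{\mathrm t}.
\]
Here $E$ is the superdiagonal entry of $f_q$ that straddles the two blocks. A filtration adapted to the block-lower-triangular structure only sees $\operatorname{ad}(f_{p,p}+f_\delta)$ on the associated graded, and that graded map is not an isomorphism, already for dimension reasons:
\begin{itemize}
\item On the $2p$-block, $(\check{\mathfrak u}_{2p}\cap\mathfrak p_{2p})\oplus\mathcal S^0_{p,p}$ has dimension $2p^2-p$, while $\dim\check{\mathfrak b}_{2p}=2p^2+p$. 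By Corollary~\ref{btheta decomposition}, a complement of the image is $\mathfrak v$. Your phrase ``with $\mathfrak v$ absent'' is precisely a $2p$-dimensional cokernel.
\item On the lower-left block, $M_{\delta,2p}\oplus V^0_{p,q}$ has dimension $2p\delta+2p$, against a target of dimension $2p\delta$.
\end{itemize}
Your Sylvester claim also fails. The map $y\mapsto f_\delta y-yf_{p,p}$ is never injective, since both matrices are nilpotent and so share the eigenvalue $0$. Its kernel has dimension $2\min(p,\delta)$. For $\delta<p$, the last row meets its image in dimension $2(p-\delta)$, so the last row is not even a complement.

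\textbf{The missing idea is the coupling term $Ey$.} It copies the first row of $y$ into the last row of the $2p$-block, and it is exactly what fills the $\mathfrak v$-gap. So the triangular order must be: $2p$-block first, then lower-left, then the $\delta$-block, with the first row of $y$ grouped with the $2p$-block. This is how the paper argues. Take $\xi=\begin{pmatrix}x&0\\y&z\end{pmatrix}\in\mathfrak c_{p,q}$ with $[f_{p,q},\xi]\in\mathcal S_{p,q}$.
\begin{enumerate}
\item The $2p$-block gives $[f_{p,p},x]+Ey\in\mathcal S^0_{p,p}\subseteq\mathfrak p_{2p}$. Its $\theta_X$-fixed part is $[f_{p,p},x]+\operatorname{Av}_{\theta_X}(Ey)=0$, with $\operatorname{Av}_{\theta_X}(Ey)\in\mathfrak v$.
\item Lemmas~\ref{slice-intersection} and~\ref{adf injectivity} then give $x=0$ and $\operatorname{Av}_{\theta_X}(Ey)=0$. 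Since $\operatorname{Av}_{\theta_X}$ is injective on last-row matrices, $Ey=0$, i.e.\ the first row of $y$ vanishes.
\item The lower-left condition says the first $\delta-1$ rows of $f_\delta y-yf_{p,p}$ vanish. Its solution space is parametrized precisely by the first row of $y$, so running along the two column chains forces $y=0$.
\item Finally $[f_\delta,z]\in\mathcal S_\delta$, and Kostant's theorem gives $z=0$.
\end{enumerate}
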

It will be convenient to slightly reformulate
Theorem~\ref{whittaker-main}. Put
\[
 \mathcal P_p=\exp(\check{\mathfrak u}_{2p}\cap\mathfrak p_{2p}).
\]
Multiplication is an isomorphism $\mathcal P_p\times \check U_X\to\check U_{2p}$:
for $u\in\check U_{2p}$, its inverse is
\begin{equation}\label{eq:slice-factorization}
 a=\exp\!\left(\tfrac12\log(u\theta_X(u)^{-1})\right),\qquad k=a^{-1}u.
\end{equation}
Indeed, $\theta_X(a)=a^{-1}$ and $\theta_X(k)=k$. Conversely, a factorization
$u=ak$ with $a\in \mathcal P_p$ and $k\in \check U_X$ forces
$a^2=u\theta_X(u)^{-1}$, so is unique. Exponential and logarithm are
polynomial on these nilpotent and unipotent matrices.
Let $\pi:\check U\to\check U_{2p}$ be the natural upper-left block projection,
and put
\[
 C_{p,q}=\pi^{-1}(\mathcal P_p),\qquad
 \mathfrak c_{p,q}=T_1C_{p,q}.
\]
Applying the factorization~\eqref{eq:slice-factorization} to $\pi(u)$
gives polynomial isomorphisms
\begin{equation}\label{factorization of U}
     C_{p,q}\times \check U_X\simeq \check U,\qquad
 C_{p,q}\simeq\ker\pi\times(\check{\mathfrak u}_{2p}\cap\mathfrak p_{2p}).
\end{equation}
Thus $C_{p,q}$ is an affine transversal, and
$\check{\mathfrak u}=\mathfrak c_{p,q}\oplus\check{\mathfrak u}_X$.

\begin{example}
Consider the case $p=q=2$.
We have 
\[
 \mathfrak c_{2,2}=\check{\mathfrak u}_4\cap\mathfrak p_4
 =\left\{
 \begin{pmatrix}
 0&0&0&0\\
 r&0&0&0\\
 s&0&0&0\\
 0&-s&r&0
 \end{pmatrix}:r,s\in\bC\right\},\ \ \ \ 
C_{2,2}=\mathcal P_2=
 \left\{ 
 \begin{pmatrix}
 1&0&0&0\\
 r&1&0&0\\
 s&0&1&0\\
 0&-s&r&1
 \end{pmatrix}:r,s\in\bC\right\}.
\]

\end{example}

The first factorization in~\eqref{factorization of U} identifies
$\check U\times^{\check U_X}\Sigma_{p,q}\cong C_{p,q}\times\Sigma_{p,q}$.
Hence Theorem~\ref{whittaker-main} will follow from the next proposition.

\begin{proposition}\label{whittaker-main-reformulated}
The action map
\[
 a:C_{p,q}\times\Sigma_{p,q}\longrightarrow\check\fb+f_{p,q},
 \qquad(c,x)\longmapsto\operatorname{Ad}_c(x),
\]
is an isomorphism.
\end{proposition}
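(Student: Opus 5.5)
The plan is the standard contracting-torus argument that proves Kostant's theorem. Both $C_{p,q}\times\Sigma_{p,q}$ and $\check\fb+f_{p,q}$ are affine spaces: by \eqref{factorization of U}, $C_{p,q}\simeq\ker\pi\times(\check{\mathfrak u}_{2p}\cap\mathfrak p_{2p})$, and $\Sigma_{p,q}=f_{p,q}+\mathcal S_{p,q}$. I will find a $\mathbb G_m$-action on both sides for which $a$ is equivariant, all weights on the coordinates are strictly positive, and the unique fixed points are $(1,f_{p,q})$ and $f_{p,q}$. I will then show that $da$ at $(1,f_{p,q})$ is an isomorphism. A $\mathbb G_m$-equivariant morphism of positively graded affine spaces whose differential at the origin is invertible is an isomorphism: it induces a graded algebra map of polynomial rings which is an isomorphism on generators modulo the square of the augmentation ideal, hence surjective, and both sides have the same graded dimensions, hence bijective. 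So the proof splits into three steps: the grading, the tangent computation, and the dimension count.

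\emph{Step 1 (grading).} I will act by $t\cdot c=\varphi(t)c\varphi(t)^{-1}$ on $C_{p,q}$ and by $t\cdot x=t^2\operatorname{Ad}_{\varphi(t)}x$ on $\Sigma_{p,q}$ and on $\check\fb+f_{p,q}$, for a cocharacter $\varphi$ of $\check T$. Then $a$ is automatically equivariant. The requirements on $\varphi$ are:
\begin{itemize}
\item $\operatorname{Ad}_{\varphi(t)}f_{p,q}=t^{-2}f_{p,q}$, so that $f_{p,q}$ is fixed;
\item $\varphi$ normalizes $\check U_X$, $C_{p,q}$ and $\mathcal S_{p,q}$;
\item every $\check T$-weight occurring in $\check{\mathfrak u}$ and in $\mathcal S_{p,q}$ becomes strictly positive after the shift by $2$.
\end{itemize}
The naive choice $\operatorname{diag}(\tau_p,\tau_q)$ fixes $f_{p,q}$ but gives negative weights on the off-diagonal blocks of $\check\fb$. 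I would therefore first try a twist by a central cocharacter of the blocks, compatible with $\theta_X$ on the $2p$-block, for instance $\operatorname{diag}(\tau_p,\tau_p)\cdot\operatorname{diag}(t^{-m}I_p,t^{m}I_p)$ on $GL_{2p}$ extended suitably on the $\delta$-block, with $m\gg0$. Such a $\varphi$ commutes with $\check K$, so $f_{p,q}$ keeps weight $-2$, and with $\theta_X$ up to inversion of the twist, so that $\check U_X$, $\mathcal P_p$ and $C_{p,q}$ are preserved. Alternatively, I would use the grading of Lemma~\ref{decomp} transported through the block description \eqref{eq:general-slice-matrix}.

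\emph{Step 2 (tangent map).} At $(1,f_{p,q})$ the differential is
\[
(y,s)\longmapsto[y,f_{p,q}]+s,\qquad y\in\mathfrak c_{p,q},\ s\in\mathcal S_{p,q}.
\]
So it suffices to prove that $\operatorname{ad}_{f_{p,q}}$ is injective on $\mathfrak c_{p,q}$ and that
\[
\check\fb=[f_{p,q},\mathfrak c_{p,q}]\oplus\mathcal S_{p,q}.
\]
I would decompose $\check\fb$ and $\mathfrak c_{p,q}$ according to the blocks $2p$ and $\delta$.
\begin{itemize}
\item In the $2p$-block, $\check{\mathfrak b}_{2p}=\check{\mathfrak b}_{2p}^{\theta_X}\oplus\mathcal S^0_{p,p}$. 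Lemma~\ref{adf injectivity} and Corollary~\ref{btheta decomposition} give $\check{\mathfrak b}_{2p}^{\theta_X}=\mathfrak v\oplus[f,\check{\mathfrak u}_{2p}\cap\mathfrak p_{2p}]$, and \eqref{S} converts $\mathfrak v$ into $V^0_{p,p}$. This settles $\delta=0$.
\item In the $\delta$-block, Kostant's theorem gives $\check{\mathfrak b}_\delta=[f_\delta,\check{\mathfrak u}_\delta]\oplus\mathcal S_\delta$.
\item For the off-diagonal blocks, $\ker\pi$ consists of the $\check{\mathfrak u}$-entries outside the $2p$-block. The lower-left $\delta\times 2p$ block of $\check\fb$ is hit by $\operatorname{ad}_{f_{p,q}}$ except for its last row, which is the complement $V^0_{p,q}$. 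This last-row phenomenon is the same as the one in Lemma~\ref{slice-intersection}.
\end{itemize}
This is a triangular argument: filter by distance from the diagonal and use that $f_\delta$ acts as a shift operator.

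\emph{Step 3 (dimensions).} Surjectivity of $da$ then follows from the count
\[
\dim\mathfrak c_{p,q}+\dim\mathcal S_{p,q}=\dim\check{\mathfrak u}-\dim\check{\mathfrak u}_X+\dim\mathcal S_{p,q}=\dim\check\fb,
\]
which I would verify using $\dim\check{\mathfrak u}_X=p^2$ and Corollary~\ref{btheta decomposition}.

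I expect Step 1 to be the main obstacle: exhibiting a single cocharacter that simultaneously fixes $f_{p,q}$, preserves the $\theta_X$-defined pieces $\check U_X$ and $C_{p,q}$, and makes every weight positive. The mixing of the $p$- and $q$-blocks inside the $2p$-block makes this delicate. If no such $\varphi$ exists, the fallback is to prove bijectivity directly, by solving $\operatorname{Ad}_c(x)=y$ inductively along the filtration of $\check\fb$ by root height. Each step would then be a linear equation whose unique solvability is exactly the Step 2 decomposition in that graded piece, and this yields a polynomial inverse.
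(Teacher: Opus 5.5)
Your overall architecture matches the paper: show $da$ is invertible at $(1,f_{p,q})$ via $\check\fb=[f_{p,q},\mathfrak c_{p,q}]\oplus\mathcal S_{p,q}$ plus a dimension count, then upgrade to a global isomorphism with a positive $\mathbb G_m$-action. Your graded Nakayama/Hilbert series ending is a fine substitute for the paper's inductive construction of a polynomial inverse.

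The genuine gap is Step~2 when $\delta>0$. There $\mathcal S_{p,q}\cap\mathfrak{gl}_{2p}=\mathcal S^0_{p,p}$, because the last-row summand $V^0_{p,q}$ now sits in row $N$, outside the $2p$-block. So the piece $\mathfrak v$ of $\check{\mathfrak b}_{2p}^{\theta_X}=\mathfrak v\oplus[f_{p,p},\check{\mathfrak u}_{2p}\cap\mathfrak p_{2p}]$ is not in the slice, and none of your bullets says how it is reached. If you reuse your first bullet, the bookkeeping puts $\mathfrak v$ into the complement, where it does not belong. It also leaves unused the $2p$-dimensional kernel of $y\mapsto(\text{rows }1,\dots,\delta-1\text{ of }f_\delta y-yf_{p,p})$.

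The blocks are coupled through the entry $E=\mathbf e_{2p}\varepsilon_1^{\mathrm t}$ of $f_{p,q}$. For $\xi=\begin{pmatrix}x&0\\y&z\end{pmatrix}\in\mathfrak c_{p,q}$, the upper-left block of $[f_{p,q},\xi]$ is $[f_{p,p},x]+Ey$. Here $Ey$ is the last-row matrix carrying the first row of $y$, and $\mathfrak v$ is covered exactly by $\operatorname{Av}_{\theta_X}(Ey)$. The missing argument runs as follows:
\begin{enumerate}
\item Project the upper-left equation onto the $\theta_X$-eigenspaces to get $[f_{p,p},x]+\operatorname{Av}_{\theta_X}(Ey)=0$.
\item Corollary~\ref{btheta decomposition} and Lemma~\ref{adf injectivity} give $x=0$ and $\operatorname{Av}_{\theta_X}(Ey)=0$.
\item Injectivity of $\operatorname{Av}_{\theta_X}$ on last-row matrices kills the first row of $y$.
\item The shift-chain argument on the first $\delta-1$ rows of $f_\delta y-yf_{p,p}$ then gives $y=0$.
\item Only now does Kostant apply, to $[f_\delta,z]-yE=[f_\delta,z]$.
\end{enumerate}
Filtering by distance from the diagonal does not produce this triangularity. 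For that grading $\operatorname{ad}_{f_{p,q}}$ is homogeneous of degree $-1$ and $\mathcal S_{p,q}$ is graded, so the cross-block terms $Ey$ and $yE$ survive in every graded piece. The order that works is: upper-left block, then the rows of $y$, then $z$.

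Step~1, which you flag as the main obstacle, is not one. The paper takes $\varphi=\tau_N$.
\begin{itemize}
\item $f_{p,q}$ is a sum of some of the $E_{a,a+1}$, each of $\operatorname{Ad}_{\tau_N}$-weight $-2$, so it is fixed.
\item The conjugation weights on $\check{\mathfrak u}$ are $2(a-b)>0$, and the affine weights on $\check\fb$ are $2+2(a-b)>0$.
\item On the $2p$-block, $\tau_N$ is $\tau_{2p}$ times a scalar, and $\tau_{2p}$ is a cocharacter of the torus of $\Sp_{2p}$. So it commutes with $\theta_X$ and preserves $\check U_X$, $\mathcal P_p$, $C_{p,q}$ and $\mathcal S_{p,q}$.
\end{itemize}
Your family with $m=p$ is exactly $\tau_N$ modulo the center, and larger $m$ also work. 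Note that on $C_{p,q}$ the weights that must be positive are the unshifted conjugation weights on $\mathfrak c_{p,q}$, not the shifted ones. Also, $\operatorname{diag}(t^{-m}I_p,t^mI_p)$ lies in $\Sp_{2p}$ and is therefore $\theta_X$-fixed, not inverted. That is the actual reason conjugation by it preserves $\check U_X$ and $C_{p,q}$; if $\theta_X$ inverted it, $\check U_X$ would not be preserved.
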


The proof will follow from the following observations:
\begin{enumerate}[(i)]
\item The map $a$ is \'etale at $(1,f_{p,q})$.
\item In affine coordinates centered at $(1,f_{p,q})$ and
$f_{p,q}$, the map $a$ intertwines positive linear
$\mathbb G_m$-actions.
\end{enumerate}
These assertions imply that $a$ is an isomorphism by the polynomial
inverse argument given below.

\subsubsection{Proof of (i)}
The following assertion follows by differentiating the action map.

\begin{lemma}\label{general-slice-differential}
The differential of $a$ at $(1,f_{p,q})$ is
\[
 da:\mathfrak c_{p,q}\oplus\mathcal S_{p,q}\longrightarrow\check\fb,
 \qquad (x,y)\longmapsto[x,f_{p,q}]+y.
\]
\end{lemma}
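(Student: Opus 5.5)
The plan is to compute the derivative of $a(c,x)=\operatorname{Ad}_c(x)$ along a curve through $(1,f_{p,q})$, expanding to first order. First we fix the tangent spaces. The variety $C_{p,q}$ is a closed subvariety of $\check U$ passing through $1$, and by definition its tangent space there is $\mathfrak c_{p,q}=T_1C_{p,q}$. The slice $\Sigma_{p,q}=f_{p,q}+\mathcal S_{p,q}$ is an affine subspace, so its tangent space at $f_{p,q}$ is canonically $\mathcal S_{p,q}$. The target $\check\fb+f_{p,q}$ is an affine space modeled on $\check\fb$, so its tangent space at any point is $\check\fb$.

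Next we take a tangent vector $(x,y)\in\mathfrak c_{p,q}\oplus\mathcal S_{p,q}$ and represent it by a curve. Since $\check U$ is unipotent, we may choose a curve $c(\epsilon)\in C_{p,q}$ with $c(\epsilon)=1+\epsilon x+O(\epsilon^2)$; for instance, the factorization \eqref{factorization of U} supplies polynomial coordinates on $C_{p,q}$ in which such a curve is easy to write down. We pair it with the straight line $x(\epsilon)=f_{p,q}+\epsilon y$ in $\Sigma_{p,q}$. Since $c(\epsilon)^{-1}=1-\epsilon x+O(\epsilon^2)$, we get
\[
\operatorname{Ad}_{c(\epsilon)}(f_{p,q}+\epsilon y)=f_{p,q}+\epsilon\bigl(xf_{p,q}-f_{p,q}x+y\bigr)+O(\epsilon^2).
\]
The first-order term is $[x,f_{p,q}]+y$, which is the claimed formula.

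It remains to check that this vector really lies in $\check\fb$, which amounts to well-definedness of the target. By definition $y\in\mathcal S_{p,q}\subseteq\check\fb$, since all three summands are lower triangular. Because $x\in\check{\mathfrak u}$, we have $[x,f_{p,q}]\in\check\fb$: the element $f_{p,q}$ has weight $-1$ for the grading in which $\check{\mathfrak u}$ has positive weights, so the bracket has weight at least $0$. This is the only point needing any care, and it follows directly from the fact that $\check U$ preserves $\check\fb+f_{p,q}$. No genuine obstacle is expected; the statement is a direct first-order computation.
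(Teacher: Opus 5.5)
Your proposal is correct and follows the same route as the paper, whose proof is just ``a direct computation''. Your first-order expansion of $\operatorname{Ad}_{c(\epsilon)}(f_{p,q}+\epsilon y)$ is exactly that computation, and your weight check that $[x,f_{p,q}]\in\check\fb$ is a correct, if optional, verification that the target tangent space is right.
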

\begin{proof}
A direct computation.
\end{proof}

Therefore, (i) can be reframed as follows.

\begin{lemma}\label{etale-decomposition}
The map $[f_{p,q},-]:\mathfrak c_{p,q}\to\check\fb$ is injective,
and
\[
 \check\fb=[f_{p,q},\mathfrak c_{p,q}]\oplus\mathcal S_{p,q}.
\]
\end{lemma}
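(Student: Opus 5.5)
The plan is to compare dimensions and then prove that the map
\[
 \Phi:\mathfrak c_{p,q}\oplus\mathcal S_{p,q}\longrightarrow\check\fb,\qquad (x,y)\longmapsto[x,f_{p,q}]+y,
\]
of Lemma~\ref{general-slice-differential} is injective. Injectivity of $\Phi$ gives both assertions at once: the first because $[f_{p,q},-]$ is the restriction of $\Phi$ to $\mathfrak c_{p,q}$ (up to sign), and the direct sum decomposition because a dimension match then makes $\Phi$ bijective.

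\emph{Decomposition.} By \eqref{factorization of U} we have $\mathfrak c_{p,q}=\mathfrak k_\pi\oplus(\check{\mathfrak u}_{2p}\cap\mathfrak p_{2p})$, where $\mathfrak k_\pi=\operatorname{Lie}\ker\pi$. Concretely, $\mathfrak k_\pi=M_{\delta,2p}\oplus\check{\mathfrak u}_\delta$, consisting of the lower-left block and the strictly lower triangular $\delta$-block. I write
\[
 f_{p,q}=f_{p,p}+E+f_\delta,\qquad E=E_{2p,2p+1},
\]
since the $f_q$-block straddles the $2p$/$\delta$ boundary by exactly one entry. For $x=(x_0,y,z)$ with $x_0\in\check{\mathfrak u}_{2p}\cap\mathfrak p_{2p}$, $y\in M_{\delta,2p}$ and $z\in\check{\mathfrak u}_\delta$, I expand $[f_{p,q},x]$ into its three block components:
\begin{itemize}
\item upper-left: $[f_{p,p},x_0]+Ey$;
\item lower-left: $f_\delta y-yf_{p,p}+(\text{terms from }z\text{ and }x_0)$;
\item lower-right: $[f_\delta,z]+yE$.
\end{itemize}
The upper-right component must vanish, since everything lands in $\check\fb$.

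\emph{Dimension count.} Since $\check\fb=\check{\mathfrak u}\oplus\check{\mathfrak t}$, we need $\dim\check{\mathfrak c}_{p,q}+\dim\mathcal S_{p,q}=\dim\check\fb$, that is, $\dim\mathcal S_{p,q}=\dim\check{\mathfrak u}_X+N$. Using Corollary~\ref{btheta decomposition} and $\dim\check{\mathfrak b}_{2p}\cap\mathfrak p_{2p}=p^2$, this is routine.

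\emph{Case $\delta=0$.} Here the statement is already essentially proved. Since $\theta_X$ preserves $\check\fb$, we have
\[
 \check\fb=\check\fb^{\theta_X}\oplus(\check\fb\cap\mathfrak p_{2p})=\check\fb^{\theta_X}\oplus\mathcal S^0_{p,p},
\]
and Lemma~\ref{adf injectivity} together with Corollary~\ref{btheta decomposition} splits $\check\fb^{\theta_X}=\mathfrak v\oplus[f,\check{\mathfrak u}\cap\mathfrak p_{2p}]$. Combined with \eqref{S}, this gives the result.

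\emph{Case $\delta>0$.} I argue by triangularity, solving $\Phi(x,s)=0$ with $s\in\mathcal S_{p,q}$ in the following order.
\begin{enumerate}
\item Row by row in the lower-left block, from the top: row $i$ of $f_\delta y$ equals row $i+1$ of $y$. So the vanishing of the lower-left component, modulo the last row (which is $V^0_{p,q}$ and can be absorbed by $s$), determines rows $2,\dots,\delta$ of $y$ in terms of row $1$ of $y$ and the other variables. The term $yf_{p,p}$ only shifts columns and keeps the system triangular.
\item The first row of $y$ enters the upper-left block only through $Ey$, which is a last-row matrix. By the projection argument preceding \eqref{S} ($\operatorname{Av}_{\theta_X}$ maps last-row matrices isomorphically onto $\mathfrak v$), the upper-left equation becomes
\[
 \check{\mathfrak b}_{2p}=\mathcal S^0_{p,p}\oplus\mathfrak v\oplus[f_{p,p},\check{\mathfrak u}_{2p}\cap\mathfrak p_{2p}],
\]
with $\mathfrak v$ replaced by the image of $Ey$. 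This forces row $1$ of $y$ and $x_0$ to vanish, and with them the upper-left part of $s$. This reduces to the $\delta=0$ case.
\item Finally, with $y=0$ the lower-right equation is Kostant's decomposition $\check{\mathfrak b}_\delta=[f_\delta,\check{\mathfrak u}_\delta]\oplus\mathcal S_\delta$, which kills $z$ and the $\mathcal S_\delta$-component.
\end{enumerate}

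The main obstacle is bookkeeping the cross terms: $[E,x_0]$, the contribution of $z$ to the lower-left block, and $yE$ in the lower-right block. My first attempt is to choose a grading, for instance by the cocharacter $\gamma$ of \eqref{Whittaker cocharacter}, or by the distance of an entry from the diagonal, for which these cross terms are of strictly higher filtration degree. Then the associated graded map is block triangular with the diagonal pieces described above, and injectivity of the associated graded map implies injectivity of $\Phi$. If no single grading works, I would order the unknowns as $z$ last and $y$ row by row, and verify by hand that each cross term involves only unknowns already shown to be zero.
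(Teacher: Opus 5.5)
Your proposal is correct and follows essentially the paper's proof: a dimension count plus the trivial-intersection statement (equivalently, injectivity of $(\xi,s)\mapsto[\xi,f_{p,q}]+s$). The case $\delta=0$ is handled by the $\theta_X$-eigenspace splitting via Lemma~\ref{adf injectivity} and Corollary~\ref{btheta decomposition}, and the case $\delta>0$ by the same block analysis. There the upper-left block kills $x$ and the first row of $y$, the lower-left block propagates this to all of $y$, and Kostant kills $z$; your row recursion (row $i+1$ of $y$ equals row $i$ times $f_{p,p}$) is a slightly cleaner version of the paper's column chains.

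The cross-term bookkeeping you worry about is moot, so your fallback ordering works and no filtration is needed:
\begin{itemize}
\item the lower-left block is exactly $f_\delta y-yf_{p,p}$, with no contribution from $x$ or $z$;
\item the upper-right block $Ez-xE$ vanishes identically;
\item the only genuine cross term is $-yE$ in the lower-right block, and it disappears once $y=0$.
\end{itemize}
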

\begin{proof}
We deduce the assertion from the next two lemmas, which give the
equality of dimensions and the trivial intersection, respectively.
\end{proof}

\begin{lemma}\label{general-slice-dimensions}
$\dim\check\fb=\dim\mathfrak c_{p,q}+\dim\mathcal S_{p,q}$.
\end{lemma}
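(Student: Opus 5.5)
The plan is a direct dimension count: I will compute $\dim\check\fb$, $\dim\mathfrak c_{p,q}$ and $\dim\mathcal S_{p,q}$ separately and check that they add up. Throughout, $N=p+q$ and $\delta=q-p$.

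First, $\dim\check\fb=N(N+1)/2$. For $\mathfrak c_{p,q}$, the decomposition $\check{\mathfrak u}=\mathfrak c_{p,q}\oplus\check{\mathfrak u}_X$ coming from \eqref{factorization of U} gives
\[
\dim\mathfrak c_{p,q}=\dim\check{\mathfrak u}-\dim\check{\mathfrak u}_X .
\]
Here $\dim\check{\mathfrak u}=N(N-1)/2$. Since $\check U_X=\check U_{2p}^{\theta_X}$ is a maximal unipotent subgroup of $\Sp_{2p}$ (the Borel $\check B_{2p}$ is $\theta_X$-stable), $\dim\check{\mathfrak u}_X=p^2$. Hence $\dim\mathfrak c_{p,q}=N(N-1)/2-p^2$.

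Next I would check that the three summands of $\mathcal S_{p,q}$ form a direct sum. For $\delta>0$ this is clear, because the summands have disjoint matrix supports: $\mathcal S^0_{p,p}$ lies in the $2p$-block, $\mathcal S_\delta$ in the $\delta$-block, and $V^0_{p,q}$ in the last row and first $2p$ columns. For $\delta=0$, directness is \eqref{S}.

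Now I compute the dimension of each summand.
\begin{itemize}
\item $\dim V^0_{p,q}=2p$.
\item $\dim\mathcal S_\delta=\delta$, since the centralizer of the regular nilpotent $e_\delta$ is spanned by its powers $I_\delta,e_\delta,\ldots,e_\delta^{\delta-1}$.
\item For $\mathcal S^0_{p,p}=\check{\mathfrak b}_{2p}\cap\mathfrak p_{2p}$: because $\check{\mathfrak b}_{2p}$ is $\theta_X$-stable, it splits as $\check{\mathfrak b}_{2p}^{\theta_X}\oplus(\check{\mathfrak b}_{2p}\cap\mathfrak p_{2p})$. The proof of Corollary~\ref{btheta decomposition} gives $\dim\check{\mathfrak b}_{2p}^{\theta_X}=p^2+p$. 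Therefore
\[
\dim\mathcal S^0_{p,p}=p(2p+1)-(p^2+p)=p^2 .
\]
As a cross-check, the split torus part contributes $p$ and the nilpotent part $\check{\mathfrak u}_{2p}\cap\mathfrak p_{2p}$ contributes $p(p-1)$, which is also $p^2$.
\end{itemize}

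Summing, $\dim\mathcal S_{p,q}=p^2+\delta+2p$, and
\[
\dim\mathfrak c_{p,q}+\dim\mathcal S_{p,q}=\frac{N(N-1)}{2}+\delta+2p=\frac{N(N-1)}{2}+N=\frac{N(N+1)}{2}=\dim\check\fb .
\]

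The only step needing care is the identification $\dim\check{\mathfrak u}_X=p^2$, together with the directness of $\mathcal S_{p,q}$ when $\delta=0$. For the first, I would argue that $\check U_X$ is the unipotent radical of the Borel $\check B_{2p}^{\theta_X}$ of $\Sp_{2p}$. Alternatively, it follows from the splitting $\check{\mathfrak u}_{2p}=(\check{\mathfrak u}_{2p}\cap\check\fg_X)\oplus(\check{\mathfrak u}_{2p}\cap\mathfrak p_{2p})$ with the second summand of dimension $p(p-1)$, as in Corollary~\ref{btheta decomposition}, which gives $p(2p-1)-p(p-1)=p^2$. The directness when $\delta=0$ is already recorded in \eqref{S}.
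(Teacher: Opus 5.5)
Your proof is correct and is the same direct dimension count as the paper's. The paper simply records $\dim\mathfrak c_{p,q}=N(N-1)/2-p^2$ and $\dim\mathcal S_{p,q}=p^2+\delta+2p=p^2+N$ and adds them; you also correctly supply the facts it leaves implicit, namely $\dim\check{\mathfrak u}_X=p^2$, $\dim\mathcal S^0_{p,p}=p^2$, and the directness of the sum defining $\mathcal S_{p,q}$.
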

\begin{proof}
We have
\[
 \dim\mathfrak c_{p,q}=\frac{N(N-1)}2-p^2,\qquad
 \dim\mathcal S_{p,q}=p^2+\delta+2p=p^2+N.
\]
Their sum is $N(N+1)/2$.
\end{proof}

\Needspace{5\baselineskip}
\begin{lemma}\label{general-slice-intersection}
If $\xi\in\mathfrak c_{p,q}$ and
$[f_{p,q},\xi]\in\mathcal S_{p,q}$, then $\xi=0$.
\end{lemma}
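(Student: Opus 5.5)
We decompose $\xi\in\mathfrak c_{p,q}$ according to the block decomposition $N=2p+\delta$. By \eqref{factorization of U}, $\mathfrak c_{p,q}=\ker(d\pi)\oplus(\check{\mathfrak u}_{2p}\cap\mathfrak p_{2p})$. Here $\ker(d\pi)$ consists of the strictly lower triangular matrices supported in the lower-right $\delta$-block together with the lower-left $\delta\times 2p$ block. So we write
\[
 \xi=\begin{pmatrix}\xi_{11}&0\\ \xi_{21}&\xi_{22}\end{pmatrix},\qquad
 \xi_{11}\in\check{\mathfrak u}_{2p}\cap\mathfrak p_{2p},\quad
 \xi_{22}\in\check{\mathfrak u}_\delta,\quad \xi_{21}\in M_{\delta,2p}.
\]
Since $f_{p,q}=\operatorname{diag}(f_p,f_q)$, we also split $f_{p,q}=f_{p,p}+E'+f_\delta$. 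Here $E'=E_{2p,2p+1}=E$ is the single off-diagonal entry linking the last $p$-part of the $q$-block with the $\delta$-part. We then compute $[f_{p,q},\xi]$ blockwise and compare with the shape of $\mathcal S_{p,q}=\mathcal S^0_{p,p}\oplus\mathcal S_\delta\oplus V^0_{p,q}$.

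The argument proceeds by peeling off blocks in an order that makes the coupling terms through $E$ harmless.

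\emph{Step 1: the $\delta$-block.} The lower-right entry of $[f_{p,q},\xi]$ is $[f_\delta,\xi_{22}]+\varepsilon_1\cdot(\text{last row of }\xi_{21}\text{-type term})$. More precisely, it is $[f_\delta,\xi_{22}]$ plus the contribution $E_{\delta}$-row$\cdot$column coming from $E\xi$, namely $\varepsilon_1^{\mathrm t}$-row terms that vanish because $\xi$ has zero upper-right block, and from $\xi E$, which lies in the lower-left and upper-right positions only. This must lie in $\mathcal S_\delta=Z_{\mathfrak{gl}_\delta}(e_\delta)$. By Kostant's theorem for $GL_\delta$ (freeness and transversality of $\check U_\delta$ on $f_\delta+\check{\mathfrak b}_\delta$, i.e. $\check{\mathfrak b}_\delta=[f_\delta,\check{\mathfrak u}_\delta]\oplus\mathcal S_\delta$ with $\operatorname{ad}_{f_\delta}$ injective on $\check{\mathfrak u}_\delta$), we get $\xi_{22}=0$. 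We must check carefully which cross terms land in the $\delta$-block; the only candidate is $\xi_{21}E$, which lies in the $\delta\times\delta$ block with support in the first column. Because of this, we instead start with the lower-left block and argue by a triangular induction.

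\emph{Step 2: the lower-left block.} The lower-left $\delta\times 2p$ block of $[f_{p,q},\xi]$ equals $f_\delta\xi_{21}-\xi_{21}f_{p,p}+\xi_{22}\cdot 0-(\ldots)$. It must be supported in the last row, since $V^0_{p,q}$ occupies exactly the last row, columns $\le 2p$. Reading the rows $1,\dots,\delta-1$ gives the recursion $(\text{row}_{a+1}\xi_{21})=\text{row}_a(\xi_{21})f_{p,p}$, with an additional equation from the upper-right block. That upper-right block of $[f,\xi]$ is $E\xi_{22}-\xi_{11}\cdot 0-\ldots$, which must vanish since $\mathcal S_{p,q}$ has zero upper-right block. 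Combining the two, and using the lower-triangularity of $\xi_{21}$-constraints, we aim to show $\xi_{21}=0$ and $\xi_{22}=0$ jointly. We do this by descending induction on the row index, using that $f_{p,p}$ is nilpotent so the recursion terminates.

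\emph{Step 3: the $2p$-block.} Once $\xi_{21}=\xi_{22}=0$, the upper-left block of $[f_{p,q},\xi]$ is $[f_{p,p},\xi_{11}]$, and the cross term $E\xi_{21}$ now vanishes. It lies in $\check\fb^{\theta_X}$ and must belong to the upper-left part of $\mathcal S_{p,q}$. For $\delta>0$ this part is $\mathcal S^0_{p,p}\subset\mathfrak p_{2p}$, so $[f,\xi_{11}]\in\check\fg_X\cap\mathfrak p_{2p}=0$, and Lemma~\ref{adf injectivity} gives $\xi_{11}=0$. For $\delta=0$ we use \eqref{S}: the $\theta_X$-fixed component of $[f,\xi_{11}]$ lies in $\mathfrak v$, so $[f,\xi_{11}]\in\mathfrak v$. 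Lemma~\ref{slice-intersection} together with Lemma~\ref{adf injectivity} then gives $\xi_{11}=0$.

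The main obstacle is Step 2: organizing the coupled equations between $\xi_{21}$ and $\xi_{22}$ through the entry $E$ so that the triangular induction closes. We expect to handle it with a weight argument. We grade by the cocharacter $\gamma$ of \eqref{Whittaker cocharacter}, under which $f_{p,q}$ is homogeneous, and treat one weight space at a time.
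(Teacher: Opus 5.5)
For $\delta=0$ your Step~3 is correct and is the paper's argument. For $\delta>0$ there is a genuine gap: Step~2 cannot close, because the equations it uses do not determine $(\xi_{21},\xi_{22})$.

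\begin{itemize}
\item The upper-right block imposes nothing. We have $E\xi_{22}=0$ because the first row of the strictly lower triangular $\xi_{22}$ is zero, and $\xi_{11}E=0$ because the last column of $\xi_{11}$ is zero.
\item The lower-left condition says exactly $\mathrm{row}_{a+1}(\xi_{21})=\mathrm{row}_a(\xi_{21})f_{p,p}$ for $1\le a<\delta$. This propagates downward from the first row and leaves that row free. The last row is where $V^0_{p,q}$ sits, so a descending induction has no starting point.
\item The matrix $\xi_{21}E$ is supported in the first column, so it lies in $\check{\mathfrak b}_\delta$. By Kostant's decomposition $\check{\mathfrak b}_\delta=[f_\delta,\check{\mathfrak u}_\delta]\oplus\mathcal S_\delta$, the lower-right condition $[f_\delta,\xi_{22}]-\xi_{21}E\in\mathcal S_\delta$ has a unique solution $\xi_{22}$ for every $\xi_{21}$.
\end{itemize}

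So the lower-left, lower-right and upper-right conditions together cut out a $2p$-dimensional space of pairs $(\xi_{21},\xi_{22})$. This space is parametrized by the first row of $\xi_{21}$ and does not involve $\xi_{11}$. For $\delta=1$ these conditions impose nothing at all. A weight argument with $\gamma$ cannot fix this, since the grading adds no equations.

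The missing idea is that the first row of $\xi_{21}$ is detected only by the upper-left block. It enters through exactly the cross term you set aside in Step~3: that block is $[f_{p,p},\xi_{11}]+E\xi_{21}$, and $E\xi_{21}$ is the last-row matrix whose last row is the first row of $\xi_{21}$. The paper therefore starts with this block.

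\begin{itemize}
\item The block lies in $\mathcal S^0_{p,p}\subset\mathfrak p_{2p}$, while $[f_{p,p},\xi_{11}]\in\check\fg_X$. Taking $\theta_X$-fixed parts gives $[f_{p,p},\xi_{11}]=-\operatorname{Av}_{\theta_X}(E\xi_{21})\in\mathfrak v$.
\item Lemma~\ref{slice-intersection} and Lemma~\ref{adf injectivity} then give $\xi_{11}=0$ and $\operatorname{Av}_{\theta_X}(E\xi_{21})=0$.
\item Since $\operatorname{Av}_{\theta_X}$ is injective on last-row matrices, the first row of $\xi_{21}$ vanishes.
\end{itemize}

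Only now does the lower-left recursion give $\xi_{21}=0$, by induction on the row index. The lower-right block then reduces to $[f_\delta,\xi_{22}]\in\mathcal S_\delta$, and Kostant gives $\xi_{22}=0$.

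So the blocks must be treated in the order upper-left, lower-left, lower-right, with $\xi_{11}$ and the first row of $\xi_{21}$ handled at the same time.
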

\begin{proof}
We assume first that $\delta>0$.
With $E=\mathbf e_{2p}\varepsilon_1^{\mathrm t}$ as above, write
\[
 f_{p,q}=\begin{pmatrix}f_{p,p}&E\\0&f_\delta\end{pmatrix},\qquad
 \xi=\begin{pmatrix}x&0\\y&z\end{pmatrix},
\]
where $x\in\check{\mathfrak u}_{2p}\cap\mathfrak p_{2p}$,
$y\in M_{\delta,2p}$, and $z\in\check{\mathfrak u}_\delta$.
The hypothesis gives
\[
 [f_{p,q},\xi]=
 \begin{pmatrix}
 [f_{p,p},x]+Ey&Ez-xE\\
 f_\delta y-yf_{p,p}&[f_\delta,z]-yE
 \end{pmatrix}
 =\begin{pmatrix}a&0\\v&b\end{pmatrix},
\]
where $a\in\mathcal S^0_{p,p}$, $b\in\mathcal S_\delta$, and $v$ is
supported in its last row.

We project the first condition $[f_{p,p},x]+Ey=a$ to the
$\theta_X=1$ and $\theta_X=-1$ eigenspaces in $\mathfrak{gl}_{2p}$.
Since $a\in\mathfrak p_{2p}$ and $[f_{p,p},x]\in\check\fg_X$, we obtain
\[
 [f_{p,p},x]+\operatorname{Av}_{\theta_X}(Ey)=0,\qquad
 \tfrac12(Ey-\theta_X(Ey))=a.
\]
In the first equation, $x\in\check{\mathfrak u}_{2p}\cap\mathfrak p_{2p}$ and
$\operatorname{Av}_{\theta_X}(Ey)\in
\operatorname{Av}_{\theta_X}(V^0_{p,p})=\mathfrak v$.
Lemmas~\ref{slice-intersection} and~\ref{adf injectivity}
therefore give $x=0$ and $\operatorname{Av}_{\theta_X}(Ey)=0$.
The projection $\operatorname{Av}_{\theta_X}$ is injective on last-row
matrices: $\theta_X$ carries the last row to the first column, whose
intersection with that row is spanned by the $\theta_X$-fixed matrix
$E_{2p,1}$. Hence $Ey=0$, so the first row of $y$ vanishes.

Now we take up the equation $f_\delta y-yf_{p,p}=v$. It asserts that
the first $\delta-1$ rows of the left hand side vanish.
If $\delta=1$, the vanishing of the first row of $y$ already gives
$y=0$. For $\delta>1$, given a vector $t\in\bC^\delta$, we use the
notation
\[
 t=\begin{pmatrix}t_1\\\vdots\\t_\delta\end{pmatrix},\qquad
 t^+=\begin{pmatrix}t_1\\\vdots\\t_{\delta-1}\end{pmatrix},\qquad
 t^-=\begin{pmatrix}t_2\\\vdots\\t_\delta\end{pmatrix}.
\]
If $y_1,\ldots,y_{2p}\in\bC^\delta$ are the columns of $y$, then
\[
 \begin{aligned}
 f_\delta y&=\begin{pmatrix}
 y_1^-&y_2^-&\cdots&y_{2p}^-\\
 0&0&\cdots&0
 \end{pmatrix},\\[6pt]
 yf_{p,p}&=\begin{pmatrix}
 0&y_1^+&\cdots&y_{p-1}^+&0&y_{p+1}^+&\cdots&y_{2p-1}^+\\
 0&*&\cdots&*&0&*&\cdots&*
 \end{pmatrix},
 \end{aligned}
\]
with empty column ranges omitted when $p=1$. Discarding the final rows
and equating the resulting matrices yields the following system:
\[
 \begin{aligned}
 y_1^-&=0,& y_j^-&=y_{j-1}^+\quad(2\leq j\leq p),\\
 y_{p+1}^-&=0,& y_j^-&=y_{j-1}^+\quad(p+2\leq j\leq2p).
 \end{aligned}
\]
Since the first row of $y$ is zero, $y_1^-=0$ implies $y_1=0$.
Thus $y_1^+=0$, and repeating this argument along the first chain gives
$y_1=\cdots=y_p=0$. The induction begins again from
$y_{p+1}^-=0$ and results in $y_{p+1}=\cdots=y_{2p}=0$.
Thus $y=0$ in all cases. The lower-right block now gives
$[f_\delta,z]=b\in\mathcal S_\delta$. The differential at
$(1,f_\delta)$ of Kostant's isomorphism implies that $z=0$.
This completes the proof when $\delta>0$.

When $\delta=0$, we have
$\xi\in\mathfrak c_{p,p}=\check{\mathfrak u}_{2p}\cap\mathfrak p_{2p}$ and
$[f_{p,p},\xi]\in\mathcal S_{p,p}^0\oplus\mathfrak v$.
Since the commutator belongs to $\check\fg_X$ and
$\mathcal S_{p,p}^0\subseteq\mathfrak p_{2p}$, it belongs to $\mathfrak v$.
Lemma~\ref{slice-intersection} gives $[f_{p,p},\xi]=0$, and
Lemma~\ref{adf injectivity} then gives $\xi=0$.
\end{proof}

\begin{proof}[Proof of (ii) and Proposition~\ref{whittaker-main-reformulated}]
The preceding lemmas establish (i).
Let $\tau_N$ act on $C_{p,q}$ by conjugation and on the affine matrix
spaces by $x\mapsto t^2\operatorname{Ad}_{\tau_N(t)}x$. On the first
$2p$ coordinates, $\tau_N$ differs from $\tau_{2p}$ by a scalar, so it
preserves $\mathcal P_p$ and $C_{p,q}$. In the coordinates
$\ker\pi\times(\check{\mathfrak u}_{2p}\cap\mathfrak p_{2p})$ all weights are
positive: the weights of conjugation on strictly lower triangular
matrix entries are $2(a-b)>0$ for $a>b$. The matrix $f_{p,q}$ is fixed,
while all weights on $\check\fb$ are $2+2(a-b)>0$ for $a\geq b$.
The subspace $\mathcal S_{p,q}$ is stable: its first block and
last-row summand are stable, and, when $\delta>0$, $e_\delta^k$ has
weight $2k+2$ under the affine action. Thus the source and target are
affine spaces with positive linear actions in coordinates centered at
$(1,f_{p,q})$ and $f_{p,q}$, respectively. The map $a$ is
equivariant. This proves (ii).

To conclude, use these affine coordinates on
$C_{p,q}\times\Sigma_{p,q}$ and $\check\fb+f_{p,q}$.
The actions are positive and linear, and $da_{(1,f_{p,q})}$ is an
isomorphism. After an equivariant linear change of coordinates, the
pullback of a coordinate of weight $d$ is itself plus a polynomial in
coordinates of weights strictly less than $d$. Induction on the
weights constructs a polynomial inverse to $a$.
\end{proof}

For comparison with the grading on the Whittaker space, recall
$\tau_{\check K}=\operatorname{diag}(\tau_p,\tau_q)$ and the cocharacter
from~\eqref{Whittaker cocharacter}:
\begin{equation}\label{eq:normal-form-grading}
 \gamma(t)=\tau_{\check K}(t)^2\tau_N(t)^{-1}
 =\operatorname{diag}(\tau_p(t),\tau_q(t))
       \operatorname{diag}(t^qI_p,t^{-p}I_q).
\end{equation}
Conjugation by $\gamma$ preserves $\check U_X$ and $C_{p,q}$,
and $x\mapsto t^2\operatorname{Ad}_{\gamma(t)}x$ preserves
$\Sigma_{p,q}$ and fixes $f_{p,q}$. Indeed, the first $2p$ entries
of $\gamma$ form a cocharacter of $\check G_X$, and its last $\delta$
entries are $\tau_\delta$. The normal-form isomorphism of
Theorem~\ref{whittaker-main} is therefore equivariant for this action
as well. The equivariance required in Theorem~\ref{affine closures} is for
\eqref{eq:normal-form-grading}; $\tau_N$ is used in the contraction argument.

\subsection{Lagrangian correspondence}\label{Lag}
We  consider the case $p=q\geq1$, with the notation of
Section~\ref{generalized Kostant thm}. 
Recall that we use the trace
form to make the equivariant identifications
$\check\fg^*\simeq\check\fg$ and
$\check\fg_X^*\simeq\check\fg_X$. Put $f=f_{p,p}$ and $v_0=\mathbf e_{2p}$.
The restriction of $f$ to $\check{\mathfrak u}$ is $d\psi_{\check K}$.
We use the Whittaker space
\[
T^*_{\psi_{\check K}}(\check G/\check U)=\check G\times^{\check U}(\check\fb+f).
\]
With the convention $(g,z)\sim(gh^{-1},hz)$, the moment map for the
right action on $T^*\check G$ is $-\xi|_{\check{\mathfrak u}}$, so this is the
reduction at $-d\psi_{\check K}$.

We now consider the Hamiltonian $\check G$-variety $\check M=T^*\mathcal X$, where
$\mathcal X=\check G\times^{\check G_X} V\simeq \check G/\check G_X\times V$. Let $x_0=[1,v_0]\in \mathcal X$.
The twisted conormal to the $\check G$-orbit through $(1\check U,x_0)$ in
$(\check G/\check U)\times \mathcal X$ gives a correspondence
\[
\begin{tikzcd}
 &\mathfrak L\ar[ld,"p_1",swap]\ar[rd,"p_2"]&\\
 T^*_{\psi_{\check K}}(\check G/\check U)&&T^*\mathcal X.
\end{tikzcd}
\]
Letting $\check G_{X,v_0}\subseteq \check G_X$ denote the stabilizer of $v_0$ and
$\mu:T^*\mathcal X\to\check\fg^*$ denote the moment map, we have
\[
 F_{x_0}=T^*_{x_0}\mathcal X\cap\mu^{-1}(\check\fb+f),
 \qquad \mathfrak L=\check G\times^{\check U\cap \check G_{X,v_0}}F_{x_0},
\]
with $p_1([g,\eta])=[g,\mu(\eta)]$ and $p_2([g,\eta])=g\eta$.
We observe that $\check U\cap \check G_{X,v_0}=\check U\cap \check G_X=\check U_X$, since $v_0$ is
already $\check U$-invariant. Moreover, $f\in\mathfrak p_{2p}$ implies
$d\psi_{\check K}|_{\check{\mathfrak u}_X}=0$, and hence
$\psi_{\check K}|_{\check U_X}=0$;
this is the stabilizer condition for the twisted conormal construction.

We can write
$T^*_{x_0}\mathcal X\simeq(\check\fg/\check\fg_X)^*\times\{v_0\}\times V^*$.
Using $\omega$, we identify $V^*\simeq V$. The trace form is
$\theta_X$-invariant, and thus identifies
$(\check\fg/\check\fg_X)^*$ with $\mathfrak p_{2p}$. In these
trace-orthogonal coordinates,
\[
 T^*\mathcal X\simeq \check G\times^{\check G_X}(\mathfrak p_{2p}\times V\times V),\qquad
 \mu([g,A,v,w])=\operatorname{Ad}_g(A+\beta(v,w)),
\]
where $i(w)=\beta(v_0,w)$ is the map of the preceding subsection.
Recall the affine space $\mathcal S=\Sigma_{p,p}=f_{}+\mathcal S_{p,p}$
in~\eqref{S}.
Since $i(V)\subset\check\fb\cap\check\fg_X$, the preimage of
$\check\fb+f$ in $T^*_{x_0}\mathcal X$ is
\[
 ((\check\fb+f)\cap\mathfrak p_{2p})\times\{v_0\}\times V
 \simeq\mathcal S,\qquad (A,v_0,w)\longmapsto A+i(w).
\]
Our correspondence becomes
\begin{equation}\label{Lag}
    \begin{tikzcd}
 &\check G\times^{\check U_X}\mathcal S
       \ar[ld,"p_1",swap]\ar[rd,"p_2"]&\\
 \check G\times^{\check U}(\check\fb+f)&&
       \check G\times^{\check G_X}(\mathfrak p_{2p}\times V\times V).
\end{tikzcd}
\end{equation}
The right arrow sends $[g,A+i(w)]$ to $[g,A,v_0,w]$.
Identifying
$\check G\times^{\check U_X}\mathcal S\simeq
\check G\times^{\check U}(\check U\times^{\check U_X}\mathcal S)$, we see that $p_1$ is
induced from the action map
$\bar a:\check U\times^{\check U_X}\mathcal S\to\check\fb+f$, which is
an isomorphism by Theorem~\ref{whittaker-main}, applied with $q=p$.
We conclude that
$p_1$ is an isomorphism.

\begin{lemma}
 $\dim T^*_{\psi_{\check K}}(\check G/\check U)=\dim T^*(\check G\times^{\check G_X} V)$.
\end{lemma}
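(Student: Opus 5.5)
The plan is to compute both dimensions directly, in the case $p=q$ of this subsection, with $\check G=GL_{2p}$, $N=2p$ and $\check G_X=\Sp_{2p}$.

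For the left side, use the presentation $T^*_{\psi_{\check K}}(\check G/\check U)\simeq\check G\times^{\check U}(\check\fb+f)$. Because $\check U$ acts freely on $\check G$, the associated bundle has dimension
\[
\dim\check G-\dim\check U+\dim\check\fb=\dim\check G+\operatorname{rank}\check G=4p^2+2p .
\]
As a cross-check, this agrees with $2\dim(\check G/\check U)$, since $\dim\check\fb-\dim\check U=\dim\check T$.

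For the right side, $T^*\mathcal X$ is the cotangent bundle of $\mathcal X\simeq\check G/\check G_X\times V$. Its dimension is therefore
\[
2\bigl(\dim\check G-\dim\check G_X+\dim V\bigr)=2\bigl(4p^2-p(2p+1)+2p\bigr)=4p^2+2p .
\]
The same value comes from the presentation $\check G\times^{\check G_X}(\mathfrak p_{2p}\times V\times V)$: here $\dim\mathfrak p_{2p}=\dim\check\fg-\dim\check\fg_X=2p^2-p$, and adding $\dim\check G_X=2p^2+p$ together with $4p$ again gives $4p^2+2p$. Comparing the two counts proves the equality.

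There is no real obstacle; the only point needing care is that both associated-bundle presentations are honest fibre bundles, since $\check U$ and $\check G_X$ act freely on $\check G$ by right multiplication. An alternative route is to note that $p_1$ is an isomorphism, which gives $\dim\check G\times^{\check U_X}\mathcal S=\dim\check G-\dim\check U_X+\dim\mathcal S$, but the direct count above is simpler.
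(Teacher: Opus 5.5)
Your proof is correct and is essentially the paper's argument: the paper cancels the common terms first, reducing the equality to $\dim\check G_X=\dim\check U+\dim V=\dim\check B$ (both equal to $p(2p+1)$), whereas you compute both totals directly as $4p^2+2p$. One slip in your optional cross-check: for $\check G\times^{\check G_X}(\mathfrak p_{2p}\times V\times V)$ you must add $\dim\check G-\dim\check G_X=2p^2-p$, not $\dim\check G_X=2p^2+p$ (which would give $4p^2+4p$), though your main count does not depend on this.
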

\begin{proof}
 Equivalently, we claim that $\dim \check G_X=\dim \check U+\dim V$.
 Since $\dim V=\dim\check T$, we are claiming that $\dim \check G_X=\dim \check B$.
 Indeed, both dimensions are $p(2p+1)$.
\end{proof}

The correspondence is Lagrangian. Indeed, the two maps from $\check G\times F_{x_0}$
\[
 (g,\eta)\longmapsto(g,\mu(\eta))\in T^*\check G,\qquad
 (g,\eta)\longmapsto g\eta\in T^*\mathcal X
\]
have equal pullbacks of the Liouville forms: on a tangent vector with
$\check G$-component $ga$, both evaluate as $\langle\mu(\eta),a\rangle$.
Differentiating and descending gives
$p_1^*\omega_{T^*_{\psi_{\check K}}(\check G/\check U)}=p_2^*\omega_{T^*\mathcal X}$; the dimension equality
proves the assertion. Hence $j=p_2p_1^{-1}$ preserves the symplectic
forms and is a morphism of smooth varieties of the same dimension,
so it is \'etale. It follows that $p_2$ is \'etale.

\subsubsection{Big open image}
We shall show 
that $p_2$ in the Lagrangian correspondence~\eqref{Lag} is an open immersion and is surjective up to a closed
subvariety of codimension at least two.

Writing $\check G\times^{\check U_X}\mathcal S\simeq
\check G\times^{\check G_X}(\check G_X\times^{\check U_X}\mathcal S)$, we see that $p_2$ is
induced from a map
\[
 \check G_X\times^{\check U_X}\mathcal S\longrightarrow\mathfrak p_{2p}\times V\times V.
\]
To simplify notation, write
\[
 \mathcal T_0=(f+\check\fb)\cap\mathfrak p_{2p},\qquad
 \mathcal T=\mathcal T_0\times\{v_0\}.
\]
Then
$\mathcal S\simeq\mathcal T\times V$, and under the isomorphism
\[
 \check G_X\times^{\check U_X}(\mathcal T\times V)
 \xrightarrow{\sim}(\check G_X\times^{\check U_X}\mathcal T)\times V,
 \qquad[k,t,w]\longmapsto([k,t],kw),
\]
the map becomes $q\times\operatorname{id}_V$, where
\begin{equation}\label{q}
    q:\check G_X\times^{\check U_X}\mathcal T\longrightarrow\mathfrak p_{2p}\times V,
 \qquad[k,(A,v_0)]\longmapsto(\operatorname{Ad}_kA,kv_0). 
\end{equation}
Because $p_2$ is \'etale, the same is true of $q$, as may be checked
after the smooth surjective base change $\check G\to \check G/\check G_X$. Moreover, to
show that $p_2$ is an open immersion with complement of codimension
at least two, it suffices to show the same of $q$.

Our first step is to show that $q$ is an open immersion.

\begin{lemma}\label{q is open}
 $q$ is an open immersion.
\end{lemma}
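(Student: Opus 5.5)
Since $q$ is already known to be \'etale, I will show that $q$ is injective on $\mathbb C$-points. An \'etale morphism that is injective on points is universally injective. By Zariski's main theorem, an \'etale, separated, universally injective morphism of varieties is an open immersion. So the whole task is to prove injectivity.

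Suppose $q([k,(A,v_0)])=q([k',(A',v_0)])$. Replacing $k$ by $k'^{-1}k$, we may assume $k'=1$. Then we have $kv_0=v_0$ and $\operatorname{Ad}_kA=A'$, with $A,A'\in\mathcal T_0=(f+\check\fb)\cap\mathfrak p_{2p}$ and $k\in\check G_{X,v_0}$. We must show $k\in\check U_X$; then the two classes coincide in $\check G_X\times^{\check U_X}\mathcal T$. The plan is to exploit the flag attached to $(A,v_0)$. The elements of $f+\check\fb$ satisfy $(A-\text{lower part})$-type recursions, so the vectors $v_0,Av_0,\ldots$ generate a distinguished chain of subspaces. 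Concretely, for $A=f+b$ with $b$ lower triangular and $f=\operatorname{diag}(f_p,f_p)$ having $1$'s on the superdiagonal, $A$ sends $\mathbf e_j$ into $\mathbf e_{j-1}+\operatorname{span}(\mathbf e_{j},\ldots,\mathbf e_{2p})$ within each block.

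I will try to show that $A$ and $v_0$ determine a complete flag $F_\bullet$ canonically (equivariantly in $(A,v_0)$), and that this flag equals the standard flag $\operatorname{span}(\mathbf e_{j},\ldots,\mathbf e_{2p})$ stabilized by $\check B$ whenever $A\in\mathcal T_0$. Here is the natural candidate. The $A$-cyclic subspace $\langle v_0\rangle_A$ generated by $v_0=\mathbf e_{2p}$ should be $\operatorname{span}(\mathbf e_{p+1},\ldots,\mathbf e_{2p})$, a Lagrangian. The flag inside it is $F_j=\operatorname{span}(v_0,Av_0,\ldots,A^{j-1}v_0)$. The other half of the flag comes from the symplectic orthogonals $F_j^{\perp}$; the definition of $\mathfrak p_{2p}$ as $\omega$-self-adjoint operators makes these $A$-stable in the appropriate sense. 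Once this is in place, $\operatorname{Ad}_kA=A'$ and $kv_0=v_0$ imply that $k$ carries $F_\bullet(A)$ to $F_\bullet(A')$. So $k$ preserves the standard flag, hence $k\in\check B\cap\check G_X$, and it fixes $v_0$.

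It remains to see $k$ is unipotent. Writing $k=tu$ with $t\in\check T\cap\check G_X$, the conditions that $\operatorname{Ad}_k$ preserves $f+\check\fb$ (the $f$-part must be fixed modulo $\check\fb$) and that $tv_0=v_0$ force $t$ to act trivially on the superdiagonal entries of $f$ and on $\mathbf e_{2p}$. Together with the symplectic condition $t=\operatorname{diag}(t_1,\ldots,t_p,t_p^{-1},\ldots,t_1^{-1})$, this gives $t=1$. Hence $k\in\check U_X$.

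The main obstacle is the flag step, specifically the claim that the cyclic span of $v_0$ under a general $A\in\mathcal T_0$ is exactly $\operatorname{span}(\mathbf e_{p+1},\ldots,\mathbf e_{2p})$, or more precisely a subspace determined by $(A,v_0)$ lying in the standard flag. This is delicate because the lower-left block $C$ of $A$ can mix the two halves. The first thing I would try is a direct triangularity computation: $A\mathbf e_j\equiv\mathbf e_{j-1}$ modulo later basis vectors within a block. If the cyclic-subspace description fails, my fallback is to replace it by a degree argument. Both sides carry the contracting $\mathbb G_m$-action of the preceding subsection, so an \'etale equivariant map that is injective on the fiber over the fixed point would suffice. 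There, injectivity reduces to showing the stabilizer of $(f,v_0)$ in $\check G_X$, intersected with the transporter into $\mathcal T_0$, is $\check U_X$. That stabilizer can be computed directly, as $f$ is regular in $\check\fk$.
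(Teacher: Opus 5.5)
Your proposal is correct and follows essentially the same route as the paper. Both arguments use that $q$ is étale and injective on points. Injectivity comes from the flag $E_i=\langle v_0,Av_0,\ldots,A^{i-1}v_0\rangle$, which is the standard one because $A^{i-1}v_0\equiv\mathbf e_{2p-i+1}$ modulo $E_{i-1}$, together with its symplectic orthogonals.

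The step you flag as delicate is immediate. The upper-right block of any $A\in f+\check\fb$ vanishes, so the lower-left block never touches the span of $v_0$. Your torus argument for unipotence is an equivalent substitute for the paper's observation that $k$ matches the normalized generators $A^{i-1}v_0$ on the graded pieces.
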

\begin{proof}
 Because $q$ is \'etale, it suffices to show that it is injective on
 geometric points. Put $E_0=0$ and
 \[
 E_i=\langle \mathbf e_{2p},\mathbf e_{2p-1},\ldots,\mathbf e_{2p-i+1}\rangle
 \qquad(1\leq i\leq p).
 \]
 For $A\in(\check\fb+f)\cap\mathfrak p_{2p}$, the prescribed
 superdiagonal entries give
 $A^{i-1}v_0\equiv \mathbf e_{2p-i+1}\pmod{E_{i-1}}$.
 Suppose that $[k,(A,v_0)]$ and $[k',(A',v_0)]$ have the same image
 $(x,v)$. Both $k$ and $k'$ carry $E_i$ to
 $\langle v,xv,\ldots,x^{i-1}v\rangle$, with the same normalized
 generators on the successive quotients. Thus $k^{-1}k'$ preserves
 this isotropic flag and acts trivially on its graded pieces.
 Since it is symplectic, it also preserves the dual flag and acts
 trivially on its graded pieces. Hence $k^{-1}k'\in \check U\cap \check G_X=\check U_X$,
 so the two associated-bundle points coincide. The argument works
 over every algebraically closed extension of $\bC$. Thus $q$ is
 universally injective and \'etale, and therefore an open immersion.
\end{proof}

Our next step is to show that $q$ is surjective up to a closed
subvariety of codimension at least two. Since $0\in V$ has
codimension $2p\geq2$, we may replace $V$ by $V\setminus\{0\}$.
But $\check G_X$ acts transitively on $V\setminus\{0\}$. Letting $\check G_{X,v_0}$ denote
the stabilizer of $v_0$ in $\check G_X$, we see that it suffices to show that
the action map
$\check G_{X,v_0}\times((\check\fb+f)\cap\mathfrak p_{2p})\to\mathfrak p_{2p}$
is surjective up to a codimension at least two subvariety.
To this end, define $\ell:\mathfrak p_{2p}\to\bZ$ by setting $\ell(x)$ to be
the dimension of the subspace $V_x$ spanned by
$v:=v_0,xv,x^2v,\ldots$. It is clear that the fibers of $\ell$ are
locally closed subvarieties of $\mathfrak p_{2p}$.

\begin{lemma}\label{isotropic generating space}
 Let $x\in\mathfrak p_{2p}$ and $w\in V$. The subspace spanned by
 $x^iw$ for $i\geq0$ is isotropic.
\end{lemma}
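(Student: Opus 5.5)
The plan is to use the defining property of $\mathfrak p_{2p}=\check\fg^{-\theta_X}$: its elements are exactly the endomorphisms that are self-adjoint for the symplectic form. Concretely, $x\in\mathfrak p_{2p}$ if and only if $\omega(xa,b)=\omega(a,xb)$ for all $a,b\in V$. This follows from the description of $\theta_X$ as $g\mapsto\Omega^{-1}(g^{\mathrm t})^{-1}\Omega$. On the Lie algebra this is $y\mapsto-\Omega^{-1}y^{\mathrm t}\Omega$. So $\theta_X(y)=-y$ says $\Omega^{-1}y^{\mathrm t}\Omega=y$, i.e.\ $y^{\mathrm t}\Omega=\Omega y$. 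Since $\omega(a,b)=a^{\mathrm t}\Omega b$ with the paper's normalization (up to an overall sign, which is irrelevant here), this is precisely the identity $\omega(ya,b)=\omega(a,yb)$. The same fact was already used when identifying $\mathfrak p_{2p}$ with the symplectic-self-adjoint endomorphisms in Section~\ref{invariant quotients}, so it may simply be cited.

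With self-adjointness in hand, the proof is a one-line computation. For $i,j\geq0$, moving all powers of $x$ to one side gives
\[
\omega(x^iw,x^jw)=\omega(w,x^{i+j}w)=\omega(x^{j}w,x^{i}w)=-\omega(x^iw,x^jw).
\]
The middle equality comes from moving the powers of $x$ back the other way: $\omega(w,x^{i+j}w)=\omega(x^{i+j}w,w)$ by self-adjointness, and this equals $\omega(x^jw,x^iw)$. The last equality is skew-symmetry. Hence $\omega(x^iw,x^jw)=0$ for all $i,j$. Since $\omega$ is bilinear, the span of the vectors $x^iw$ is isotropic.

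The only step needing any care is the sign bookkeeping that confirms $-\theta_X$-eigenvectors are self-adjoint rather than skew-adjoint. The skew-adjoint operators are the ones in $\mathfrak{sp}_{2p}=\check\fg_X$, and for those the argument would fail. I would verify this directly from the block formula for $\theta_X$ on a single test element, for example $f\in\mathfrak p_{2p}$, to rule out a sign slip. There is no real obstacle beyond this check.
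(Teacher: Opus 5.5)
Your proposal is correct and follows essentially the same route as the paper: the identity $\omega(x^iw,x^jw)=\omega(w,x^{i+j}w)=\omega(x^jw,x^iw)$ from self-adjointness, combined with skew-symmetry. Your sign check is also right; in fact $\omega(a,b)=a^{\mathrm t}\Omega b$ holds exactly with the paper's normalization, so $\theta_X(y)=-y$ is precisely $y^{\mathrm t}\Omega=\Omega y$, i.e.\ self-adjointness.
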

\begin{proof}
 We have
 $\omega(x^iw,x^jw)=\omega(w,x^{i+j}w)=\omega(x^jw,x^iw)$
 by the self-adjointness of $x$. But
 $\omega(x^iw,x^jw)=-\omega(x^jw,x^iw)$, since $\omega$ is
 alternating. So $\omega(x^iw,x^jw)=0$.
\end{proof}

\begin{lemma}\label{regular locus}
 The bound $\ell(x)\leq p$ holds for $x\in\mathfrak p_{2p}$. Moreover,
 it is attained over an open dense subvariety of $\mathfrak p_{2p}$.
\end{lemma}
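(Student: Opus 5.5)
The bound is immediate from Lemma~\ref{isotropic generating space}. Apply it with $w=v_0$: the span $V_x$ of $v_0,xv_0,x^2v_0,\ldots$ is an isotropic subspace of the $2p$-dimensional symplectic space $V$. Isotropic subspaces have dimension at most $p$, so $\ell(x)=\dim V_x\leq p$ for every $x\in\mathfrak p_{2p}$.

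For the second assertion, I would first reduce to exhibiting a single point. The condition $\ell(x)\geq p$ is equivalent to
\[
 v_0\wedge xv_0\wedge\cdots\wedge x^{p-1}v_0\neq0 .
\]
This is the nonvanishing of some $p\times p$ minor of the matrix with columns $x^iv_0$, $0\le i<p$, so it is a Zariski open condition. Combined with the bound, the locus $\{\ell=p\}$ is open. Since $\mathfrak p_{2p}$ is an irreducible affine space, a nonempty open subset is automatically dense. So it suffices to find one $x\in\mathfrak p_{2p}$ with $\ell(x)=p$.

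For that witness I would take $x=f=f_{p,p}$, which lies in $\mathfrak p_{2p}$ as noted before Lemma~\ref{adf injectivity}. Its second block acts as the shift $\mathbf e_{j}\mapsto\mathbf e_{j-1}$ on $W_2=\langle\mathbf e_{p+1},\ldots,\mathbf e_{2p}\rangle$. Hence $f^iv_0=\mathbf e_{2p-i}$ for $0\le i<p$, and these vectors are linearly independent. This is the same computation as in the proof of Lemma~\ref{q is open}, where $A^{i-1}v_0\equiv\mathbf e_{2p-i+1}\pmod{E_{i-1}}$ for $A\in(\check\fb+f)\cap\mathfrak p_{2p}$. In fact that congruence shows $\ell=p$ on all of $\mathcal T_0$, so any element of $\mathcal T_0$ would serve as the witness.

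There is no real obstacle here. The only point to check carefully is that $f$ acts on $W_2$ by sending $\mathbf e_j$ to $\mathbf e_{j-1}$. With $f_p=\sum_i E_{i,i+1}$ one has $f_p\mathbf e_{i+1}=\mathbf e_i$, which gives exactly this.
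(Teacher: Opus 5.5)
Your proof is correct and follows the paper's argument. The bound comes from the isotropy of $V_x$ (Lemma~\ref{isotropic generating space}), openness of $\ell^{-1}(p)$ comes from that bound, nonemptiness comes from $\ell(f)=p$, and density comes from the irreducibility of $\mathfrak p_{2p}$. You additionally spell out the two steps the paper leaves implicit: the minor/Krylov-stabilization argument for openness, and the computation $f^iv_0=\mathbf e_{2p-i}$.
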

\begin{proof}
 By Lemma~\ref{isotropic generating space}, the subspace $V_x$ is
 isotropic. So $\ell(x)=\dim V_x\leq\dim V/2=p$. Therefore
 $\ell^{-1}(p)$ is open. It is nonempty because $\ell(f)=p$.
 The density follows from the irreducibility of $\mathfrak p_{2p}$.
\end{proof}

Let $\mathcal U:=\ell^{-1}(p)$ be the dense open subvariety of
Lemma~\ref{regular locus}.

\begin{lemma}\label{isotropic flag variety dimension}
 Let $V$ be a symplectic vector space of dimension $2p$ and let
 $0\leq i\leq p$. Then the variety $Gr_i^\omega$ of
 $i$-dimensional isotropic subspaces $W\subseteq V$ has dimension
 \[
 \dim Gr_i^\omega=i\left(2p-\frac{3i-1}{2}\right).
 \]
\end{lemma}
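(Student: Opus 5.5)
We compute the dimension of $Gr_i^\omega$ by exhibiting it as a homogeneous space, or equivalently by a fibration, and reading off the dimension. The cleanest route is an incidence count. Consider the variety $\mathcal F_i$ of ordered tuples $(w_1,\ldots,w_i)$ of linearly independent vectors spanning an isotropic subspace. Then $\mathcal F_i\to Gr_i^\omega$ is a principal $GL_i$-bundle, so
\[
\dim Gr_i^\omega=\dim\mathcal F_i-i^2 .
\]
So the plan is to compute $\dim\mathcal F_i$ and subtract $i^2$.

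To compute $\dim\mathcal F_i$, build the tuple one vector at a time. Suppose $w_1,\ldots,w_{j-1}$ are independent and span an isotropic subspace $W_{j-1}$ of dimension $j-1$. The admissible choices for $w_j$ are the vectors in $W_{j-1}^{\perp}\setminus W_{j-1}$. By nondegeneracy of $\omega$, $W_{j-1}^\perp$ has dimension $2p-(j-1)$, and it contains $W_{j-1}$ because $W_{j-1}$ is isotropic. Hence the fiber of $\mathcal F_j\to\mathcal F_{j-1}$ is a nonempty open subset of an affine space of dimension $2p-j+1$; it is nonempty since $j-1<p$ forces $W_{j-1}^\perp\supsetneq W_{j-1}$. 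These maps are locally trivial, as one sees by choosing local bases of the vector bundle $W^\perp$ over $\mathcal F_{j-1}$. Summing,
\[
\dim\mathcal F_i=\sum_{j=1}^{i}(2p-j+1)=2pi-\frac{i(i-1)}{2}.
\]
Therefore
\[
\dim Gr_i^\omega=2pi-\frac{i(i-1)}{2}-i^2=i\left(2p-\frac{3i-1}{2}\right),
\]
as claimed. The same argument shows that $Gr_i^\omega$ is smooth and irreducible: $\mathcal F_i$ is an iterated open subset of affine bundles over a point.

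As an alternative check, $\Sp(V)$ acts transitively on $Gr_i^\omega$ by Witt's theorem. The stabilizer is a parabolic with Levi $GL_i\times\Sp_{2p-2i}$, and the unipotent radical has dimension $i(2p-2i)+i(i+1)/2$. So
\[
\dim Gr_i^\omega=p(2p+1)-i^2-(p-i)(2p-2i+1)-i(2p-2i)-\frac{i(i+1)}{2},
\]
which simplifies to the same formula.

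The only step requiring care is the bookkeeping that $W_{j-1}\subseteq W_{j-1}^\perp$ has the right dimension at each stage, and that the fibration is locally trivial so that dimensions add. Both are routine, so we do not expect a genuine obstacle.
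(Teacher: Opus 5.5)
Your argument is correct and is essentially the paper's: both build the isotropic subspace one step at a time, with fibers governed by $W_{j-1}^\perp$, and then divide out the choice of basis. The only difference is cosmetic. The paper uses isotropic flags, with fibers $\mathbf P(W_{j-1}^\perp/W_{j-1})$, and then subtracts $\dim Fl(W)=i(i-1)/2$, whereas you use frames, with fibers open in $W_{j-1}^\perp$, and subtract $\dim GL_i=i^2$; your parabolic cross-check is also correct.
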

\begin{proof}
 The case $i=0$ is immediate. Let $Fl_i^\omega$ denote the variety
 of isotropic flags in $V$ of height $i$. There is a smooth
 surjection $Fl_i^\omega\to Gr_i^\omega$ whose fiber over $W$ is
 the variety $Fl(W)$ of complete flags in $W$. Hence
 \[
 \dim Gr_i^\omega=\dim Fl_i^\omega-\dim Fl(W).
 \]
 Recall that $\dim Fl(W)=i(i-1)/2$. The natural map
 $Fl_i^\omega\to Fl_{i-1}^\omega$ has fiber over $W_\bullet$
 isomorphic to $\mathbf P(W_{i-1}^\perp/W_{i-1})$, of dimension
 $2p-2(i-1)-1$. So
 \[
 \dim Fl_i^\omega=\sum_{j=0}^{i-1}(2p-2j-1)=(2p-i)i.
 \]
 Therefore
 \[
 \dim Gr_i^\omega=(2p-i)i-\frac{i(i-1)}2
 =i\left(2p-\frac{3i-1}{2}\right).
 \]
\end{proof}

\begin{lemma}\label{dimension estimate}
 For $1\leq i\leq p$, we have
 \[
 \operatorname{codim}_{\mathfrak p_{2p}}\ell^{-1}(i)=2(p-i).
 \]
 In particular, $\dim \ell^{-1}(i)\leq\dim\mathfrak p_{2p}-2$ for $i<p$.
\end{lemma}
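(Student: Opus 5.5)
Fix $v=v_0$; recall $\ell(x)=\dim V_x$ with $V_x=\langle v,xv,x^2v,\ldots\rangle$. The plan is to compute $\dim\ell^{-1}(i)$ by fibering it over the isotropic Grassmannian $Gr_i^\omega$ via $x\mapsto V_x$. For $x\in\ell^{-1}(i)$ the space $W=V_x$ is $x$-stable, contains $v$, and is isotropic by Lemma~\ref{isotropic generating space}. So we have a map $\ell^{-1}(i)\to Gr_i^\omega(v)$, where $Gr_i^\omega(v)$ is the variety of $i$-dimensional isotropic subspaces containing $v$. This variety is identified with isotropic $(i-1)$-subspaces of $v^\perp/\langle v\rangle$, a symplectic space of dimension $2p-2$. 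Lemma~\ref{isotropic flag variety dimension} with $p$ replaced by $p-1$ then gives
\[
\dim Gr_i^\omega(v)=(i-1)\Bigl(2p-2-\tfrac{3i-4}{2}\Bigr).
\]

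Next I will describe the fiber over a fixed $W$. Choose a symplectic basis adapted to $W\subset W^\perp\subset V$. Since $x$ is $\omega$-self-adjoint, $x$-stability of $W$ forces $x$-stability of $W^\perp$. So $x$ is block upper triangular with respect to $W\subset W^\perp\subset V$, and self-adjointness ties the $V/W^\perp$ block to the $W$ block. Writing $\mathfrak p_{2p}\cong\wedge^2V$ and counting, the self-adjoint operators preserving $W$ form a linear space $\mathfrak p_W$. Its dimension is $\dim\mathfrak p_{2p}$ minus the dimension of $\Hom(W, V/W)$-type data cut down by self-adjointness; I expect this to be $\dim\mathfrak p_{2p}-i(2p-i)+\binom{i}{2}$, to be checked directly in block coordinates. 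Inside $\mathfrak p_W$ the fiber is the open condition that $v$ is cyclic for $x|_W$, which is nonempty. For example, take $x|_W$ regular nilpotent with cyclic vector $v$ and extend it self-adjointly, as with $f$ in the paper. Hence the fiber has dimension exactly $\dim\mathfrak p_W$.

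Adding the base and fiber dimensions, the codimension of $\ell^{-1}(i)$ is
\[
i(2p-i)-\tbinom{i}{2}-(i-1)\Bigl(2p-2-\tfrac{3i-4}{2}\Bigr).
\]
This is a quadratic in $i$. It vanishes at $i=p$, consistent with Lemma~\ref{regular locus}, and I will verify that it equals $2(p-i)$ identically. The main obstacle is the fiber count: determining $\dim\mathfrak p_W$ correctly, including what self-adjointness imposes on the off-diagonal blocks $W\to W^\perp/W$ and $W^\perp/W\to V/W^\perp$. If the naive count does not yield $2(p-i)$, I will instead compute $\dim\mathfrak p_W$ as the $-\theta_X$ part of the parabolic $\mathfrak{gl}(V)_W$ stabilizing $W$. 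Since $W$ is isotropic, that parabolic is $\theta_X$-stable. Its $+\theta_X$ part is the symplectic parabolic, of dimension $\dim\check\fg_X-\dim Gr_i^\omega$, so $\dim\mathfrak p_W=\dim\mathfrak{gl}(V)_W-\dim\check\fg_X+\dim Gr_i^\omega$. This gives a clean closed form for the fiber. Finally, the bound for $i<p$ follows since $2(p-i)\ge2$, and irreducibility of the fibration (base and fibers irreducible) shows that the codimension is exact rather than just an upper bound.
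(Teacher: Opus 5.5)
Your strategy is the paper's. You fiber $\ell^{-1}(i)$ over the variety of isotropic $i$-planes containing $v$, and your base dimension $\frac{(i-1)(4p-3i)}{2}$ is correct. Your description of the fiber over $W$ as the nonempty open subset of the linear space $\mathfrak p_W=\{x\in\mathfrak p_{2p}: xW\subseteq W\}$ where $v$ is cyclic for $x|_W$ is also correct, and slightly more direct than the paper's further map to $\mathfrak{gl}_W\times\mathfrak p_{W^\perp/W}$ with $\mathfrak u_W\cap\mathfrak p_{2p}$-torsor fibers.

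The gap is that the one real computation, $\dim\mathfrak p_W$, is never carried out, and both formulas you propose for it are wrong.

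\emph{Your guess.} The guess $\dim\mathfrak p_{2p}-i(2p-i)+\binom{i}{2}$ removes $i(2p-i)-\binom{i}{2}=\dim Gr_i^\omega$ dimensions. That is the count for $\mathfrak{sp}_{2p}$, where $\omega(x\cdot,\cdot)$ is symmetric. For $x\in\mathfrak p_{2p}$ the form $B_x(a,b)=\omega(xa,b)$ is skew. With your guess, the displayed quadratic equals $2p-i$, not $2(p-i)$. In particular it equals $p$ at $i=p$, so your claimed sanity check that it vanishes there is false.

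\emph{Your fallback.} The fallback fails as written. The stabilizer of $W$ alone in $\mathfrak{gl}(V)$ is not $\theta_X$-stable for $i<p$: one has $\theta_X(x)=-x^\dagger$ with $x^\dagger$ the $\omega$-adjoint, and $x^\dagger$ preserves $W^\perp$ rather than $W$. Concretely, for $p=2$, $i=1$ your formula gives $\dim\mathfrak p_W=13-10+3=6=\dim\mathfrak p_4$. That would mean every self-adjoint operator preserves the line $W$, which is absurd.

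\emph{The repair.} Identify $\mathfrak p_{2p}\cong\wedge^2V^*$ via $x\mapsto B_x$. Then $xW\subseteq W$ if and only if $B_x$ vanishes on $W\times W^\perp$, i.e.\ on $W\wedge W^\perp\subseteq\wedge^2V$. This subspace has dimension $\binom{i}{2}+i(2p-2i)$. Hence
\[
\dim\mathfrak p_W=\dim\mathfrak p_{2p}-\tbinom{i}{2}-i(2p-2i),
\]
which agrees with the paper's $i^2+(p-i)(2(p-i)-1)+d$. Your fibration then gives
\[
\operatorname{codim}_{\mathfrak p_{2p}}\ell^{-1}(i)=\tbinom{i}{2}+i(2p-2i)-\tfrac{(i-1)(4p-3i)}{2}=2(p-i).
\]
Your parabolic argument also works once you replace the stabilizer of $W$ by the stabilizer $Q$ of the flag $W\subset W^\perp$. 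This $Q$ is $\theta_X$-stable, and its $-\theta_X$ part is exactly $\mathfrak p_W$, since self-adjoint operators preserving $W$ preserve $W^\perp$. It has $\dim Q=4p^2-4pi+3i^2$, and $\dim Q-\dim\mathfrak{sp}_{2p}+\dim Gr_i^\omega$ reproduces the same value of $\dim\mathfrak p_W$.
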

\begin{proof}
 Let $\mathcal Y$ be the variety of isotropic subspaces of $V$ of
 dimension $i$ containing $v$. There is a map
 $\pi:\ell^{-1}(i)\to\mathcal Y$ taking $x$ to $V_x$.
 This map is surjective. Indeed, suppose that $W$ is any such
 subspace. We may choose a symplectic basis
 $v_1,\ldots,v_{2p}=v$ such that $W$ is spanned by
 $v_{2p},\ldots,v_{2p-i+1}$. Let $x$ operate on $V$ by
 $xv_j=v_{j-1}$ for $j\geq2p-i+2$ or $2\leq j\leq i$, and
 $xv_j=0$ otherwise. Then $x\in\mathfrak p_{2p}$,
 $x\in \ell^{-1}(i)$, and $\pi(x)=W$.

 Now we compute $\dim\mathcal Y$. Let $L$ be the span of $v$.
 Then isotropic $i$-planes $W$ containing $L$ correspond to
 isotropic $(i-1)$-planes $W/L$ in the symplectic vector space
 $L^\perp/L$, of dimension $2p-2$.
 By Lemma~\ref{isotropic flag variety dimension},
 \[
 \dim\mathcal Y=(i-1)\left(2p-2-\frac{3(i-1)-1}{2}\right)
 =\frac{(i-1)(4p-3i)}2.
 \]
 Fix $W\in\mathcal Y$ and consider the fiber $\pi^{-1}(W)$.
 Since $x$ preserves $W$ and is self-adjoint, it operates on $W$
 and $W^\perp/W$. The symplectic form descends to $W^\perp/W$,
 and the action of $x$ is self-adjoint with respect to it.
 Let $\mathfrak p_{W^\perp/W}$ be the space of these endomorphisms.
 We get a map
 \[
 \eta:\pi^{-1}(W)\longrightarrow
 \mathfrak{gl}_W\times\mathfrak p_{W^\perp/W}.
 \]
 Since $\dim(W^\perp/W)=2(p-i)$, the base has dimension
 \[
 \dim\mathfrak{gl}_W+\dim\mathfrak p_{W^\perp/W}
 =i^2+\dim\bigwedge^2(W^\perp/W)^*
 =i^2+(p-i)(2(p-i)-1).
 \]
 Fix $x_1\in\mathfrak{gl}_W$ and
 $x_2\in\mathfrak p_{W^\perp/W}$. The image of $\eta$ consists
 of all such pairs for which $v$ generates $W$ under $x_1$.
 This is an open condition on $x_1$, so the image is open.
 Now consider a nonempty fiber $\eta^{-1}(x_1,x_2)$.
 Let $P_W\subseteq \check G$ be the stabilizer of the flag
 $W\subseteq W^\perp$, let $U_W$ be its unipotent radical, and
 put $\mathfrak u_W=\operatorname{Lie}(U_W)$.
 Then $\mathfrak u_W\cap\mathfrak p_{2p}$ operates on the fiber by
 translation, making it a $\mathfrak u_W\cap\mathfrak p_{2p}$-torsor.
 In a symplectic decomposition $V=W\oplus E\oplus W^*$, its free
 blocks are an arbitrary $W\otimes E$ block and an alternating
 $W\otimes W$ block. Thus
 \[
 d:=\dim(\mathfrak u_W\cap\mathfrak p_{2p})
 =2i(p-i)+\frac{i(i-1)}2.
 \]
 Putting it all together,
 \begin{align*}
 \dim \ell^{-1}(i)
 &=\frac{(i-1)(4p-3i)}2+i^2+(p-i)(2(p-i)-1)+d\\
 &=2p^2-3p+2i.
 \end{align*}
 By comparison, $\dim\mathfrak p_{2p}=\dim\bigwedge^2V^*=2p^2-p$.
 So the desired codimension is
 \[
 \operatorname{codim}_{\mathfrak p_{2p}}\ell^{-1}(i)
 =(2p^2-p)-(2p^2-3p+2i)=2(p-i),
 \]
 which is at least two when $i<p$.
\end{proof}

\begin{corollary}
 The complement of the open subvariety $\mathcal U=\ell^{-1}(p)$
 in $\mathfrak p_{2p}$ has codimension at least two.
\end{corollary}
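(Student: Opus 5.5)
The complement $\mathfrak p_{2p}\setminus\mathcal U$ is the union of the strata $\ell^{-1}(i)$ with $i<p$. Since $\ell$ takes values in $\{1,\ldots,p\}$ on $\mathfrak p_{2p}$, this is a finite union. Here we use $v=v_0\neq0$, so $\ell(x)\geq1$, together with the bound $\ell(x)\leq p$ of Lemma~\ref{regular locus}. The plan is to observe that the complement is closed, being the complement of the open set $\mathcal U$, and then bound its dimension stratum by stratum.

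Concretely, we write
\[
\mathfrak p_{2p}\setminus\mathcal U=\bigcup_{i=1}^{p-1}\ell^{-1}(i).
\]
Each $\ell^{-1}(i)$ is locally closed, as noted before Lemma~\ref{isotropic generating space}. By Lemma~\ref{dimension estimate}, it has codimension exactly $2(p-i)\geq 2$ for $i\leq p-1$. The dimension of a finite union of locally closed subvarieties is the maximum of their dimensions. Since the closure of a locally closed set has the same dimension, the closed set $\mathfrak p_{2p}\setminus\mathcal U$ has dimension at most $\dim\mathfrak p_{2p}-2$, which is the assertion. When $p=1$ the union is empty and there is nothing to prove.

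The substantive input is entirely Lemma~\ref{dimension estimate}. The corollary itself is formal, the only point needing care being the lower bound $\ell\geq 1$, which comes from $v_0\neq0$.
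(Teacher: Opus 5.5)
Your proposal is correct and matches the paper's proof: the complement is the finite union of the locally closed strata $\ell^{-1}(i)$, $1\leq i<p$, each of codimension $2(p-i)\geq 2$ by Lemma~\ref{dimension estimate}, and the case $p=1$ is vacuous. Your remarks that $\ell\geq 1$ because $v_0\neq 0$, and that dimension behaves well under finite unions and closures, only make explicit what the paper leaves implicit.
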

\begin{proof}
 The complement $\mathfrak p_{2p}\setminus\mathcal U$ is the union
 of the subvarieties $\ell^{-1}(i)$ for $1\leq i<p$, so the claim
 follows from Lemma~\ref{dimension estimate}. For $p=1$ the
 complement is empty.
\end{proof}

\begin{lemma}
 Every $x\in\mathcal U$ is conjugate under $\check G_{X,v_0}$ to an element
 of $(\check\fb+f)\cap\mathfrak p_{2p}$.
\end{lemma}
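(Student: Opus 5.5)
The plan is to build an explicit symplectic basis adapted to $x$ and $v_0=\mathbf e_{2p}$, and to take $k\in\check G_{X,v_0}$ to be the change of basis from the standard one. Fix $x\in\mathcal U$ and put $w_i=x^{i-1}v_0$ for $1\leq i\leq p$. By definition of $\mathcal U$, the vectors $w_1,\dots,w_p$ span $V_x=:W$, which has dimension $p$. By Lemma~\ref{isotropic generating space}, $W$ is isotropic, hence Lagrangian. Since $\dim V_x=p$, the vector $x^pv_0$ lies in $\langle w_1,\dots,w_p\rangle$, so $xW\subseteq W$.

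Next I will set $\mathbf e'_{2p+1-i}=w_i$ for $1\leq i\leq p$. I choose any Lagrangian complement $W'$ of $W$ and let $\mathbf e'_1,\dots,\mathbf e'_p\in W'$ be the basis determined by $\omega(\mathbf e'_i,\mathbf e'_{2p+1-j})=\delta_{ij}$. This basis exists because $\omega$ identifies $W'\simeq W^*$. Then $\{\mathbf e'_j\}$ has the same Gram matrix as the standard basis. Hence the linear map $k$ with $k\mathbf e_j=\mathbf e'_j$ lies in $\Sp_{2p}=\check G_X$, and $k\mathbf e_{2p}=v_0$, so $k\in\check G_{X,v_0}$. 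Put $y=\operatorname{Ad}_{k^{-1}}x$, which lies in $\mathfrak p_{2p}$ since $\mathfrak p_{2p}$ is $\check G_X$-stable. It remains to check that $y\in\check\fb+f$, i.e.\ that $y$ is lower triangular apart from the entries $1$ in positions $(j-1,j)$ for $j\neq p+1$.

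I will read off the matrix of $y$ block by block.
\begin{enumerate}[(a)]
\item For $j>p+1$ we have $y\mathbf e_j=\mathbf e_{j-1}$, since $x w_i=w_{i+1}$. The column $y\mathbf e_{p+1}$ represents $x^pv_0\in W$, so it is supported in rows $p+1,\dots,2p$. Thus the lower-right block is a transposed-companion matrix: its nonzero entries are the superdiagonal ones together with column $p+1$, which lies in the lower triangle. Moreover, the upper-right block vanishes because $W$ is $x$-stable.
\item The lower-left block consists of $W$-components, with row indices $>p\geq$ column indices, so it is automatically lower triangular.
\item For the upper-left block, self-adjointness of $x$ says that the induced action on $V/W\simeq W^*$ is the $\omega$-adjoint of $x|_W$. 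In the chosen bases this means the upper-left block equals $r$ of the lower-right block, by the block formula for $\mathfrak p_{2p}$. Since $r$ (reflection in the antidiagonal) preserves the superdiagonal and the lower triangle, this block also has the required form with $1$'s on the superdiagonal.
\end{enumerate}
Together, (a)--(c) give $y\in(\check\fb+f)\cap\mathfrak p_{2p}$.

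The main point requiring care is step (c). There I must verify that the $\omega$-duality really produces $r(\cdot)$ of the lower block, with exactly the superdiagonal entries equal to $1$ and no sign changes, given the chosen symplectic form $\omega(\mathbf e_i,\mathbf e_{2p+1-i})=1$. This is a direct check using the explicit description of $\mathfrak p_{2p}$ as matrices $\begin{pmatrix}X&Y\\Z&r(X)\end{pmatrix}$. I expect no other obstacle: in particular, the choice of the complement $W'$ does not matter, because its ambiguity only affects the lower-left block.
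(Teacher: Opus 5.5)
Your proof is correct and follows essentially the same route as the paper: you take the $x$-stable Lagrangian $W$ spanned by $\mathbf e'_{2p+1-i}=x^{i-1}v_0$, complete it through a Lagrangian complement to a symplectic basis to get $k\in\check G_{X,v_0}$, and read off the blocks of $\operatorname{Ad}_{k^{-1}}x$. The check you flag in step (c) follows at once from the block description of $\mathfrak p_{2p}$: since $\operatorname{Ad}_{k^{-1}}x\in\mathfrak p_{2p}$, its upper-left block is $r$ of its lower-right block, and $r$ preserves both the superdiagonal and the lower triangle.
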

\begin{proof}
 Since $x\in\mathcal U$, the vectors
 $v_{2p}:=v_0,v_{2p-1}:=xv_0,\ldots,v_{p+1}:=x^{p-1}v_0$
 span a Lagrangian subspace $W$.
 By Lemma~\ref{isotropic generating space} and the maximality of
 $W$, we have $x^pv_0\in W$. Hence $W$ is $x$-stable and $x$
 operates on $V/W$. The form $\omega$ induces a perfect pairing
 $W\times V/W\to\bC$.
 Choose a Lagrangian complement to $W$ and its basis
 $v_1,\ldots,v_p$ such that
 $\omega(v_i,v_{2p+1-j})=\delta_{ij}$.
 Then there exists a unique $k\in \check G_X$ such that $k\mathbf e_i=v_i$.
 Since $k\mathbf e_{2p}=v_0=\mathbf e_{2p}$, we have $k\in \check G_{X,v_0}$.

 In this basis the lower-right block $D$ of $k^{-1}xk$ has ones
 on its superdiagonal and zeros above it. The upper-right block
 is zero because $W$ is stable. Self-adjointness makes the
 upper-left block $J_pD^tJ_p$, with the same superdiagonal
 pattern. All remaining entries lie on or below the diagonal.
 Thus $k^{-1}xk\in\check\fb+f$.
\end{proof}

\begin{corollary}\label{action map codimension}
 The image of the action map
 $q:\check G_X\times^{\check U_X}\mathcal T\to\mathfrak p_{2p}\times V$ is
 \[
 \mathcal U_{\mathrm{cyc}}=\{(x,v):v,xv,\ldots,x^{p-1}v
                         \text{ are linearly independent}\}.
 \]
 The complement of its image has codimension two.
\end{corollary}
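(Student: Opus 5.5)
To identify the image, I will prove the two inclusions separately and then compute the codimension of the complement.

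For the inclusion $\operatorname{im}(q)\subseteq\mathcal U_{\mathrm{cyc}}$, take a point $[k,(A,v_0)]$ with $A\in\mathcal T_0=(\check\fb+f)\cap\mathfrak p_{2p}$. As in the proof of Lemma~\ref{q is open}, the superdiagonal ones of $f$ give $A^{i-1}v_0\equiv\mathbf e_{2p-i+1}\pmod{E_{i-1}}$ for $1\le i\le p$. Hence $v_0,Av_0,\ldots,A^{p-1}v_0$ are linearly independent. Applying $k$, the vectors $kv_0,(\operatorname{Ad}_kA)kv_0,\ldots,(\operatorname{Ad}_kA)^{p-1}kv_0$ are linearly independent, so $q([k,(A,v_0)])\in\mathcal U_{\mathrm{cyc}}$.

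For the reverse inclusion, let $(x,v)\in\mathcal U_{\mathrm{cyc}}$. Then $v\neq0$, and $\check G_X=\Sp_{2p}$ acts transitively on $V\setminus\{0\}$, so there is $g\in\check G_X$ with $gv_0=v$. The pair $(\operatorname{Ad}_{g^{-1}}x,v_0)$ again satisfies the cyclicity condition, so $\operatorname{Ad}_{g^{-1}}x\in\mathcal U=\ell^{-1}(p)$. By the preceding lemma there is $k\in\check G_{X,v_0}$ with $\operatorname{Ad}_{k^{-1}}\operatorname{Ad}_{g^{-1}}x=A\in\mathcal T_0$. Therefore $(x,v)=q([gk,(A,v_0)])$, since $gkv_0=gv_0=v$.

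For the codimension, observe that the complement is
\[
\{(x,0)\}\ \cup\ \bigl\{(x,v):v\neq0,\ \dim\langle v,xv,\ldots\rangle<p\bigr\}.
\]
The first piece has codimension $2p\ge2$. For the second, use the $\check G_X$-equivariant fibration over $V\setminus\{0\}$, with $(x,v)\mapsto v$. Its fiber over $v_0$ is $\mathfrak p_{2p}\setminus\mathcal U=\bigcup_{i<p}\ell^{-1}(i)$, which has codimension $2$ by Lemma~\ref{dimension estimate} (the $i=p-1$ stratum has codimension exactly $2$). Transitivity makes this a locally trivial fibration, for instance via the smooth map $\check G_X\to V\setminus\{0\}$, $g\mapsto gv_0$. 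Hence the second piece also has codimension $2$ in $\mathfrak p_{2p}\times V$. When $p=1$ the second piece is empty and the codimension is $2p=2$.

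The only delicate step is the fibration argument, so I will state the equivariant trivialization explicitly.
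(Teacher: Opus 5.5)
Your proof is correct and follows the same route as the paper. The superdiagonal entries give $\operatorname{im} q\subseteq\mathcal U_{\mathrm{cyc}}$, and transitivity of $\check G_X$ on $V\setminus\{0\}$ together with the preceding lemma gives the reverse inclusion. The codimension then comes from Lemma~\ref{dimension estimate} over $v\neq 0$ plus the codimension-$2p$ locus $v=0$, with $p=1$ treated separately. You simply write out the conjugating element $gk$ and the equivariant fibration over $V\setminus\{0\}$ more explicitly than the paper does.
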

\begin{proof}
 The prescribed superdiagonal entries give
 $\operatorname{im}q\subseteq\mathcal U_{\mathrm{cyc}}$. The reverse inclusion
 follows from the preceding lemma and the transitivity of $\check G_X$
 on $V\setminus\{0\}$. Over nonzero $v$, the stratum of cyclic
 dimension $i$ has codimension $2(p-i)$ by
 Lemma~\ref{dimension estimate}; the locus $v=0$ has codimension
 $2p$. For $p\geq2$, the nonempty stratum $i=p-1$ has codimension
 two. For $p=1$, the complement is $\mathfrak p_{2p}\times\{0\}$,
 again of codimension two.
\end{proof}

Putting things together, we get:
\begin{proposition}\label{big open}
The  action map
 $q:\check G_X\times^{\check U_X}\mathcal T\to\mathfrak p_{2p}\times V$ in~\eqref{q}, hence also the $p_2$ in the Lagrangian correspondence~\eqref{Lag},
is an open immersion and
is surjective up to a closed
subvariety of codimension at least two.
\end{proposition}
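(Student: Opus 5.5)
The statement to prove is Proposition~\ref{big open}: the map $q$ in \eqref{q}, and hence $p_2$ in \eqref{Lag}, is an open immersion whose image has complement of codimension at least two. The plan is to assemble the preceding lemmas for $q$ and then transport both properties to $p_2$.

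\emph{Open immersion for $q$.} This is Lemma~\ref{q is open}. Its inputs are that $q$ is \'etale, which follows from $p_2$ being \'etale (as shown via the Lagrangian correspondence), and that $q$ is injective on geometric points, which comes from the isotropic flag argument.

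\emph{Codimension for $q$.} Here I would invoke Corollary~\ref{action map codimension}. It identifies the image of $q$ with the cyclic locus $\mathcal U_{\mathrm{cyc}}$. It also bounds the complement: over nonzero $v$, the complement is a union of strata of codimension $2(p-i)\geq 2$ by Lemma~\ref{dimension estimate}, after using transitivity of $\check G_X$ on $V\setminus\{0\}$ to reduce to $v=v_0$. Over $v=0$ the complement has codimension $2p\geq2$. The one point to check is the translation from the fixed-$v_0$ statement to the statement on $\mathfrak p_{2p}\times(V\setminus\{0\})$. I would do this by noting that the complement is $\check G_X$-stable and fibers over $V\setminus\{0\}=\check G_X/\check G_{X,v_0}$ with fiber $\mathfrak p_{2p}\setminus\mathcal U$, so codimensions agree.

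\emph{Transfer to $p_2$.} Write $p_2$ as induced from $q\times\operatorname{id}_V$ via
\[
\check G\times^{\check G_X}\bigl((\check G_X\times^{\check U_X}\mathcal T)\times V\bigr)\longrightarrow \check G\times^{\check G_X}(\mathfrak p_{2p}\times V\times V).
\]
Both properties can be checked after the smooth surjective base change $\check G\to\check G/\check G_X$. Along this base change the source and target become $\check G\times(\text{source of }q)\times V$ and $\check G\times\mathfrak p_{2p}\times V\times V$. Being an open immersion descends along this faithfully flat cover. The complement of the image is the pullback of $(\text{complement of }\operatorname{im} q)\times V$, so its codimension is preserved. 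This yields the statement for $p_2$.

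The only step needing any care is this last descent and the codimension bookkeeping; everything else is a direct citation of the preceding results.
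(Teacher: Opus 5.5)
Your proposal is correct and follows the paper's own argument: the open immersion of $q$ comes from Lemma~\ref{q is open} (\'etaleness inherited from $p_2$, plus injectivity on geometric points), the codimension bound comes from Corollary~\ref{action map codimension} via Lemma~\ref{dimension estimate} and transitivity of $\check G_X$ on $V\setminus\{0\}$, and both properties transfer to $p_2$ through $q\times\operatorname{id}_V$ and the smooth surjective base change $\check G\to\check G/\check G_X$. Your fibration of the complement over $V\setminus\{0\}=\check G_X/\check G_{X,v_0}$ spells out codimension bookkeeping that the paper leaves implicit.
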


\subsection{Proof of Theorem~\ref{affine closures}}
When $p=q$, Proposition \ref{big open} implies that 
the Lagrangian correspondence gives an open embedding
\[
 j=p_2p_1^{-1}:T^*_{\psi_{\check K}}(\check G/\check U)
       \longrightarrow\check M
\]
whose complement has codimension two. The construction is
$\check G\times\mathbb G_m$-equivariant for the grading fixed in
Section~3.4. Hartogs' principle therefore gives the desired isomorphism
of affine closures.

Now assume $p<q$, and put $\delta=q-p>0$. Recall that
$\check G=GL_N$, $N=p+q$, $\check G_X=\Sp_{2p}$, and
$\check U_X=\check G_X\cap\check U$. The same slices are
\[
 \mathcal T_0=(f_{p,p}+\mathfrak b_{2p})\cap\mathfrak p_{2p},
 \qquad \mathcal T=\mathcal T_0\times\{v_0\},\qquad v_0=\mathbf e_{2p},
\]
and we use $\mathcal S_\delta=Z_{\mathfrak{gl}_\delta}(e_\delta)$.
By Theorem~\ref{whittaker-main},
\[
 T^*_{\psi_{\check K}}(\check G/\check U)
 \simeq\check G\times^{\check U_X}\Sigma_{p,q},
 \qquad \Sigma_{p,q}=f_{p,q}+\mathcal S_{p,q}.
\]
On the other hand, the block description~\eqref{slodowy blocks} gives
a natural inclusion
\[
 \begin{aligned}
 \Sigma_{p,q}&\simeq\mathcal T\times V^*\times\mathcal S_\delta
       \lhook\joinrel\longrightarrow\mathcal Z_\delta,\\
 \mathcal Z_\delta&\simeq
       (\mathfrak p_{2p}\times V)\times V^*\times\mathcal S_\delta.
 \end{aligned}
\]
It induces a map
\[
 \begin{aligned}
 j:T^*_{\psi_{\check K}}(\check G/\check U)
 &\simeq\check G\times^{\check U_X}\Sigma_{p,q}
 &\simeq\check G\times^{\check G_X}
       (\check G_X\times^{\check U_X}\Sigma_{p,q})
       \longrightarrow\check G\times^{\check G_X}\mathcal Z_\delta
       \simeq\check M.
 \end{aligned}
\]
We claim that $j$ is a $\check G\times\mathbb G_m$-equivariant open
embedding with codimension two complement. Hartogs' principle then gives
\[
 \overline{T^*_{\psi_{\check K}}(\check G/\check U)}\cong\check M.
\]
Proof of the claim.
It suffices to consider the action map
\begin{equation}\label{action}
 \check G_X\times^{\check U_X}\Sigma_{p,q}\longrightarrow\mathcal Z_\delta.
\end{equation}
The $\check G_X$-action on $\mathcal Z_\delta$ is
\[
 h\cdot(A,b,c,s)=(\operatorname{Ad}_hA,hb,ch^{-1},s).
\]
Consequently, there is a product identification
\[
 \begin{aligned}
 \check G_X\times^{\check U_X}\Sigma_{p,q}
 &\xrightarrow{\ \sim\ }
 (\check G_X\times^{\check U_X}\mathcal T)\times V^*\times\mathcal S_\delta,\\
 [h,(A,v_0,c,s)]
 &\longmapsto\bigl([h,(A,v_0)],ch^{-1},s\bigr).
 \end{aligned}
\]
Under this identification, the action map~\eqref{action} is
\[
 \begin{aligned}
 q\times\operatorname{id}_{V^*}\times\operatorname{id}_{\mathcal S_\delta}:
 (\check G_X\times^{\check U_X}\mathcal T)\times V^*\times\mathcal S_\delta
 \longrightarrow(\mathfrak p_{2p}\times V)\times V^*\times\mathcal S_\delta,
 \end{aligned}
\]
where
$q$ is the map in
Proposition \ref{big open}. Since 
$q$ is an open embedding with codimension two complement, the same is
therefore true for~\eqref{action}, and hence for $j$. This proves the claim.

\section{Main results}

    \subsection{Ext algebras}
    Let $\IC_{\mathcal O(\check G)}\in\text{Ind}(D^b_{\mathfrak L^+G}(\Gr))$ be the regular perverse sheaf on 
    $\Gr$: the algebra ind-object  corresponding to 
the regular representation $\mathcal O(\check G)$ of $\check G$ under the geometric Satake equivalence~\eqref{GS}.
The de-equivariantized Ext
algebras
for the complex, real and relative  Satake categories  
are the dg-algebras
$A=\mathrm{RHom}_{\text{Ind}(D^b_{\frak L^+G}(\Gr_G))}(\IC^0,\IC^0\star\IC_{\mathcal O(\check G)})$, 
$A_\bR=\mathrm{RHom}_{\text{Ind}(D^b_{\frak L^+G_\bR}(\Gr_\bR))}(\IC_\bR^0,\IC_\bR^0\star\IC_{\mathcal O(\check G)})$, and 
$A_X=\mathrm{RHom}_{\text{Ind}(D^b_{\mathfrak L^+G}(\mathfrak LX))}(\IC_X^0,\IC_X^0\star\IC_{\mathcal O(\check G)})$, 
respectively.
Note that  we have a residual $\check G$-action
on $A$ (resp. $A_\bR$, $A_X$)
coming from the right $\check G$-action on $\mathcal O(\check G)$ and a natural 
$\check G$-equivariant dg algebra maps 
$A\to A_\bR$ and $A\to A_X$ induced from the Hecke action.
It follows from derived Satake equivalence for $G$ \cite{BF} that $A$ is formal 
, that is, it is isomorphic to its cohomology algebra $H^*(A)$ with trivial differentials. Moreover, there is a $\check G\times\mathbb G_m$-equivariant isomorphism 
$H^*(A)\cong\mathcal O(\check\fg^*)$
where $\check G\times\mathbb G_m$-acts on $\check\fg^*$ by 
$(g,t)(v)=t^2\text{Ad}_g(v)$.

\begin{theorem}\label{Ext}
 (1) 
 There is a $\check G$-equivariant isomorphism $A_\bR\cong A_X$
 of dg-algebras.
 Moreover,
dg-algebra $A_\bR\cong A_X $ is formal, that is,
it is isomorphic to its cohomology algebra 
$H^*(A_\bR)\cong H^*(A_X)$ with trivial differentials.

(2)
There is $\check G\times\mathbb G_m$-equivariant isomorphism 
$H^*(A_\bR)\cong 
H^*(A_X)
\cong\mathcal O(\check M)$
of $\mathcal O(\check\fg^*)$-algebras.
\end{theorem}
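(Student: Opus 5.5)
Part (1), the identification $A_\bR\cong A_X$, comes directly from Proposition~\ref{real-symmetric}. That equivalence $D_{\frak L^+G}(\frak LX)\cong D_{\frak L^+G_\bR}(\Gr_\bR)$ is compatible with the action of $D_{\frak L^+G}(\Gr)$ and is perverse $t$-exact. Hence it sends the unit orbit IC sheaf $\IC_X^0\cong\omega_{\frak L^+X}$ to $\IC^0_\bR$; both are the unique irreducibles supported on the closed orbit for $\lambda=0$. It also intertwines $-\star\IC_{\mathcal O(\check G)}$ after ind-completion. It therefore induces a $\check G$-equivariant quasi-isomorphism of the de-equivariantized Ext dg-algebras, compatible with the maps from $A$.

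For formality I would use a purity/parity argument on the real side. By Proposition~\ref{exactness}(2) and Lemma~\ref{connectedness}, each $V\star\IC^0_\bR$ is a semisimple complex, namely a sum of shifted $\IC^\lambda_\bR$. I would then show that the stalks and costalks of the $\IC^\lambda_\bR$ are concentrated in even degrees, using the even dimensions $\dim_\bR\Gr^\lambda_\bR\in2\bZ$ and the vector bundle structure over real flag manifolds from \cite{N}. This forces $\mathrm{RHom}(\IC^0_\bR,V\star\IC^0_\bR)$ to be concentrated in even degrees. A dg-algebra whose cohomology lives in even degrees is not automatically formal, so I would instead exhibit an auxiliary weight grading. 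Concretely, I would pass to a mixed or $\mathbb G_m$-equivariant (Tate) version of the real Satake category, as in \cite{BF,CN}. There Frobenius or weight purity splits the cohomological degree, and the standard Deligne-type argument gives formality, equivariantly for $\check G$.

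For part (2), I would compute $H^*(A_\bR)$ as the graded $\check G$-algebra $\bigoplus_V \mathrm{Hom}^*(\IC^0_\bR,V\star\IC^0_\bR)\otimes V^*$. This reduces to the graded multiplicity spaces $\mathrm{Hom}^*(\IC^0_\bR,V\star\IC^0_\bR)$, i.e. to the costalk of $\psi(\IC_V)$ at the base point. These are governed by \cite[Corollary 10.6.2]{N}, which expresses them via $\check G_X=\Sp_{2p}$ and the Arthur $SL_2$ of $\rho_X$. On the spectral side, Proposition~\ref{induced} and \eqref{vector bundle} give $\mathcal O(\check M)=\mathrm{Ind}_{\check G_X}^{\check G}\mathcal O(\check V_X)$. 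Frobenius reciprocity then gives $\mathrm{Hom}_{\check G}(V,\mathcal O(\check M))=\mathrm{Hom}_{\Sp_{2p}}(V,\mathcal O(\check V_X))$, graded via Lemma~\ref{decomp}, so I would match graded characters. To upgrade the character match to an algebra isomorphism, I would build an $\mathcal O(\check\fg^*)$-algebra map. First, I would compute the equivariant cohomology of the base point, $H^*_{\frak L^+G_\bR}(\mathrm{pt})=\mathcal O(\fc_{GL_p}\times\fc_{GL_p}\times\fc_{GL_\delta})$-like, matching $\mathcal O(\check M)^{\check G}$ from Section~\ref{invariant quotients}. Second, I would use localization to the $A_\bR$-fixed points, or the generic locus, to identify $\mathrm{Spec}\,H^*(A_\bR)$ generically with $T^*_{\psi_{\check K}}(\check G/\check U)$, in the spirit of \cite{M,Ga}. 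Because $H^*(A_\bR)$ is a graded algebra with matching character, and because it is normal (Cohen–Macaulay as a free module over the invariants), Theorem~\ref{affine closures} then identifies it with $\mathcal O(\check M)$.

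The main obstacle is this last step: producing an actual algebra map between $H^*(A_\bR)$ and $\mathcal O(\check M)$, rather than a mere equality of graded characters. I would first try the regular-centralizer route. I would realize $H^*(A_\bR)$ as functions on $(\check G\times J_{\check K})/\pi^*J_{\check G}$, with its affine closure, by identifying $H^*_{\frak L^+G_\bR}$ of the orbit closures with $\mathcal O(J)$-modules through the homology of $\Gr_{K_c}$, as in the $\iota_K:\Gr_K\to\Gr_G$ picture. I would then invoke Proposition~\ref{Wh=J} and Theorem~\ref{affine closures}, using Hartogs on the big open subset.
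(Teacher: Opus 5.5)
\textbf{Part (2): the algebra isomorphism is never constructed.} You do not build an algebra map between $H^*(A_\bR)$ and $\mathcal O(\check M)$, and the devices you propose as substitutes do not work as stated:
\begin{itemize}
\item Freeness over the invariants would at best give Cohen--Macaulayness, which does not imply normality.
\item A generic identification of $\operatorname{Spec}H^*(A_\bR)$ with $T^*_{\psi_{\check K}}(\check G/\check U)$, e.g.\ by localization, gives no map of algebras.
\item The graded character match would need the base-point costalks of \emph{all} $\IC^\lambda_\bR$. \cite[Corollary 10.6.2]{N} does not supply these; it only decomposes $\psi(\IC_V)$ into IC complexes.
\end{itemize}
The missing idea is a geometric reduction on the symmetric side. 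Since $\IC^0_X\cong\omega_{\frak L^+X}$ and $\frak L^+X\cong\frak L^+G/\frak L^+K$, the convolution $\IC_V\star\IC^0_X$ is computed on $\frak LG/\frak L^+K$. This space maps to $\frak LX=\frak LG/\frak LK$ with fiber $\Gr_K$ over the base point. Taking the equivariant costalk there gives the algebra isomorphism $H^*(A_X)\cong H^*_K(\Gr_K,\iota_K^!\IC_{\mathcal O(\check G)})$ of \cite[(3.1.5)]{Gi}, \cite[Lemma 4.2]{C}. For $K$ a Levi subgroup, this algebra over $\mathcal O(\check\fg^*)$ is computed in \cite[Theorem 1.5.2]{M} and \cite[Corollary 3.1.6]{Gi}, together with Proposition~\ref{Wh=J}: it is $\mathcal O(T^*_{\psi_{\check K}}(\check G/\check U))$. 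A quasi-affine variety and its affine closure have the same functions, so Theorem~\ref{affine closures} then gives $\mathcal O(\check M)$ directly and $\check G\times\mathbb G_m$-equivariantly. No character count, normality argument, or localization is needed. Your last paragraph gestures at this (``the $\iota_K$ picture''), but without the reduction and the cited computation it remains a plan, not an argument.

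\textbf{Part (1): the parity step is false.} Transporting along Proposition~\ref{real-symmetric} is fine, granted that the equivalence of \cite{CN} matches strata and not just simple objects. Purity is also the right mechanism for formality, as in \cite[Theorem 13.4]{CN}. But your intermediate parity claim fails in two ways.
\begin{itemize}
\item Stalks: on its open stratum, $\IC^\lambda_\bR$ sits in degree $-\tfrac12\dim_\bR\Gr^\lambda_\bR=-\langle\lambda,\rho\rangle$. This is odd for $\lambda=(1,-1)$ when $p=q=1$, so the evenness of $\dim_\bR\Gr^\lambda_\bR$ is irrelevant. Real flag manifolds can also have odd cohomology, e.g.\ $S^1$ for $U(1,1)$.
\item Ext groups: the statement itself forces odd degrees. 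For $p=q$, $\mathcal O(\check M)$ has nonzero elements of degree $1$ coming from the $T^*\mathbb A^{2p}[1]$ factor. Concretely, for $p=q=1$ and $V$ the standard representation, $\IC_V=\bC_{\mathbb P^1}[1]$ has $!$-restriction in degree $1$ at the two points of $\Gr_K\cap\Gr^{(1,0)}$. Hence $\mathrm{Ext}^1(\IC^0_\bR,V\star\IC^0_\bR)\neq0$.
\end{itemize}
Moreover, $\Gr_\bR$ carries no Frobenius, so the weight argument must be run on the algebraic side $\frak LX$ and then transported. Its essential input is the pointwise purity of the $\IC^\lambda_X$ \cite[Theorem 81]{CY}, which your proposal does not identify.
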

\begin{proof}
Part (1)  is  proved in \cite[Theorem 13.4]{CN}
using the pointwise purity of $\IC^\lambda_X$
in \cite[Theorem 81]{CY}.
Proof of (2).  
According to \cite[(3.1.5)]{Gi}
or \cite [Lemma 4.2]{C}\footnote{In \emph{loc. cit.}, the setting is for  twisted loop groups but the same argument applies to the un-twisted setting.}
we have  a $\check G$-equivariant isomorphism of dg-algebras
\[H^*(A_X)\cong
\mathrm{Ext}^*
_{\mathrm{Ind}(D^b_{\frak L^+G}(\frak LX))}(\IC_X^0,\IC_X^0\star\IC_{\mathcal O(\check G)})\cong\mathrm{H}^*_K(\Gr_K,\iota_K^!\IC_{\mathcal O(\check G)})\]
where $\iota_K:\Gr_K\to\Gr_G$ is natural inclusion map.
On the other hand, \cite[Theorem 1.5.2]{M} and
\cite[Corollary 3.1.6]{Gi}, together with
Proposition~\ref{Wh=J}, identify this algebra with
$\mathcal O(T^*_{\psi_{\check K}}(\check G/\check U))$ over
$\mathcal O(\check\fg^*)$.
Now the claim follows from Theorem \ref{affine closures}.

\end{proof}

Introduce the non-equivaraint dg extension algebra
$A'_\bR=\mathrm{RHom}_{\text{Ind}(D^b_{}(\Gr_\bR))}(\IC_\bR^0,\IC_\bR^0\star\IC_{\mathcal O(\check G)})$ for $D^b_{}(\Gr_\bR)$.

\begin{theorem}\label{Non-equ Ext}
(1) The dg algebra $A'_\bR$ is formal. 

(2) There is a 
$\mathbb G_m$-equivaraint isomorphism 
$H^*(A_\bR')\cong\mathcal O(\chi_{\check M}^{-1}(0))
\cong\mathcal O(GL_{p+q}\times^{\Sp_{2p}}\mathcal N'_{\Sp_{2p}})$
    
\end{theorem}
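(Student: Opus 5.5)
The plan is to deduce the non-equivariant statement from the equivariant one, Theorem~\ref{Ext}, by de-equivariantizing with respect to $\frak L^+G_\bR$, or equivalently its maximal compact $K_c$. The first step is a base change formula
\[
A'_\bR\simeq A_\bR\otimes_{H^*_{K_c}(\mathrm{pt})}\bC,
\]
derived tensor product. I would prove it from the standard identification $\mathrm{RHom}_{D(\Gr_\bR)}(\mathrm{For}\,F,\mathrm{For}\,F')\simeq \mathrm{RHom}_{D_{K_c}(\Gr_\bR)}(F,F')\otimes_{H^*_{K_c}}\bC$. This identification holds whenever the equivariant Hom is free over $H^*_{K_c}(\mathrm{pt})$. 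Freeness in turn follows from pointwise purity/parity of the stalks of $\IC^0_\bR\star\IC_{\mathcal O(\check G)}$, which is the input \cite[Theorem 81]{CY} already used in Theorem~\ref{Ext}, via the equivariant formality of the $!$-restrictions to orbits. The connectedness of stabilizers in Lemma~\ref{connectedness} lets the computation reduce orbitwise.

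Next, I would identify the map $H^*_{K_c}(\mathrm{pt})\to H^*(A_\bR)\cong\mathcal O(\check M)$ with $\chi_{\check M}^*$. Here $H^*_{K_c}(\mathrm{pt})=H^*_K(\mathrm{pt})=\mathcal O(\fc_{GL_p}\times\fc_{GL_q})$, which via $\pi$ should match $\mathcal O(\check M)^{\check G}=\mathcal O(\fc_{GL_p}\times\fc_{GL_p}\times\fc_{GL_\delta})$ as computed in Section~\ref{invariant quotients}. To establish this, I would use the description of the proof of Theorem~\ref{Ext} as $H^*_K(\Gr_K,\iota_K^!\IC_{\mathcal O(\check G)})$. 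Under the isomorphisms of Proposition~\ref{Wh=J} and Theorem~\ref{affine closures}, the $H^*_K$-module structure corresponds to the projection $(\check G\times J_{\check K})/\pi^*J_{\check G}\to\fc_{\check K}$. One should then check, using the section $\kappa_{\check M}$ of Lemma~\ref{section} and the grading degrees, that this is the invariant-theoretic quotient $\chi_{\check M}$, once $\fc_{\check K}$ is identified with $\check M\sslash\check G$.

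With this in hand, $H^*(A_\bR)$ must be shown flat over $\mathcal O(\fc_{\check K})$. This holds because $\chi_{\check M}$ is flat: $\check M\cong GL_{p+q}\times^{\Sp_{2p}}\check V_X$, and $\chi_{p,p}$ is equidimensional with smooth source and smooth (polynomial) target, so miracle flatness applies. Then $H^*(A_\bR\otimes^L_{H^*_K}\bC)=\mathcal O(\check M)\otimes_{\mathcal O(\fc_{\check K})}\bC=\mathcal O(\chi_{\check M}^{-1}(0))$, which by the discussion before Lemma~\ref{exotic nilpotent cone} equals $\mathcal O(GL_{p+q}\times^{\Sp_{2p}}\mathcal N'_{\Sp_{2p}})$. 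This gives part (2).

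For formality, part (1), I would use that $A_\bR$ is formal by Theorem~\ref{Ext}. The cohomology ring $H^*_K(\mathrm{pt})$ is a polynomial ring and is formal, and the map $H^*_K\to A_\bR$ can be made strictly compatible with the formality quasi-isomorphism. The hardest point is that the formality of $A_\bR$ must be compatible with the $H^*_K$-algebra structure, that is, formality must hold as an $H^*_K$-algebra and not just as an algebra. I would try to obtain this by a weight (purity/Frobenius or Hodge) argument: the weight grading on $A_\bR$ and on $H^*_K$ coincides with cohomological degree, so both are formal compatibly. Then the derived tensor product of formal algebras with flat cohomology is formal. Since $\chi_{\check M}$ is flat and $H^*_K$ is polynomial, the Koszul resolution computes the tensor product with no higher Tor, and formality of $A'_\bR$ follows.
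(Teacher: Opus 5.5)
Your argument for (2) works, but it is organized differently from the paper's. The paper does not obtain $A'_\bR$ by base change from $A_\bR$. It uses $\IC^0_\bR\star\IC_{\mathcal O(\check G)}=\psi(\IC_{\mathcal O(\check G)})$, the identification $\psi\cong p_*$ for the specialization map $p:\Gr\to\Gr_\bR$, adjunction, and $p^{-1}(t^0)=\Gr_K$ to identify $A'_\bR\simeq R\Gamma(\Gr_K,\iota_K^!\IC_{\mathcal O(\check G)})$ directly. Part (2) is then a cohomology-level base change on $\Gr_K$, using equivariant formality of $\iota_K^!\IC_{\mathcal O(\check G)}$ from \cite{M} together with Theorem~\ref{Ext}(2). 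The $H^*_K(\mathrm{pt})$-module structure is matched with $\chi_{\check M}^*$ simply by taking $\check G$-invariants, which gives $H^*_K(\mathrm{pt})\cong\mathcal O(\check M)^{\check G}$ at once; your detour through $(\check G\times J_{\check K})/\pi^*J_{\check G}\to\fc_{\check K}$ is unnecessary.

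Your replacement of the equivariant-formality input by flatness of $\chi_{\check M}$ is a nice purely spectral argument. Miracle flatness makes $\mathcal O(\check M)$ graded-free over $\mathcal O(\check M)^{\check G}$, and combined with the dg base change $A'_\bR\simeq A_\bR\otimes^{L}_{C^*(BK_c)}\bC$ this proves (2). It also makes the purity-based freeness claim in your first step unnecessary. That is fortunate, since $\Gr_\bR$ itself carries no weights.

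Part (1), however, has a genuine gap. Theorem~\ref{Ext}(1) gives formality of $A_\bR$ only as a dg algebra. Your deduction needs more: formality of $A_\bR$ as an algebra over $C^*(BK_c)\simeq H^*_K(\mathrm{pt})$, that is, of the structure map together with its centrality. This is extra structure and does not follow from formality of $A_\bR$ alone.

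Nor can the weight argument you sketch be run on $\Gr_\bR$. To carry it out you would have to:
\begin{enumerate}
\item pass to $\mathfrak LX$ through Proposition~\ref{real-symmetric};
\item put a mixed structure on the pair $H^*_K(\mathrm{pt})\to A_X$;
\item redo the purity argument behind \cite{CN} compatibly with that pair.
\end{enumerate}
None of this is supplied.

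The missing idea is the paper's geometric identification above. It realizes the non-equivariant algebra $A'_\bR$ itself as cohomology on the complex algebraic ind-variety $\Gr_K$. There, $!$-purity of $\iota_K^!\IC_{\mathcal O(\check G)}$ along the Iwahori stratification makes $H^i$ pure of weight $i$ \cite{BY}. The purity criterion of \cite{BF} then yields formality of $A'_\bR$ directly, with no compatibility with the $H^*_K(\mathrm{pt})$-action required.
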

\begin{proof}
    (1) 
     Note that there are isomorphisms of dg algebras
\[A_\bR' \cong \mathrm{RHom}(\IC_\bR^0, \psi(\IC_{\mathcal O(\check G)}))\cong \mathrm{RHom}_{}(\IC_\bR^0, p_*\IC_{\mathcal O(\check G)})\cong  \mathrm{RHom}_{}(p^*\IC_\bR^0, \IC_{\mathcal O(\check G)})\cong \]
\[\stackrel{(a)}\cong\mathrm{RHom}_{}(\iota_{K*}\underline{\bC}_{\Gr_K}, \IC_{\mathcal O(\check G)})\cong  R\Gamma_{}(\Gr_K, \iota_K^! \IC_{\mathcal O(\check G)}).
  \]
  Here $p:\Gr\to \Gr_\bR$
  is the specialization map 
  such that $p_*\cong\psi:D^b_c(\Gr)\to D^b_c(\Gr_\bR)$
  (see, e.g., \cite[Lemma 4.8]{CMNO})
  and  $(a)$ follows from  the observation that $p^{-1}(t^0) = \Gr_K$. 
  Since $\iota^!_K\IC_{\mathcal O(\check G)}$ 
is $!$-pure with respect to the Iwahori stratification
on $\Gr_K$, by \cite[Lemma 3.1.5]{BY}, the cohomology group $H^i(\Gr_K,\iota^!_K\IC_{\mathcal O(\check G)})$
is pure of weight $i$, and the formality of $R\Gamma_{}(\Gr_K, \iota_K^! \IC_{\mathcal O(\check G)})$
 follows from the general criterion in \cite{BF}.

 (2) Taking $\check G$-invariants in the proof of Theorem~\ref{Ext}(2)
gives $H_K^*(\on{pt})\cong\mathcal O(\check M)^{\check G}$ as graded
augmented algebras: the right regular invariants of $\mathcal O(\check G)$
are the constants.
By \cite[Proposition 2.1.1 and Remark 2.1.2]{M}, the complex
$\iota_K^!\IC_{\mathcal O(\check G)}$ is equivariantly formal. Hence we obtain isomorphisms
\[H^*(A_\bR')\cong H^*(\Gr_K,\iota_K^!\IC_{\mathcal O(\check G)})\cong H^*_K(\Gr_K,\iota_K^!\IC_{\mathcal O(\check G)})\otimes_{H^*_K(\on{pt})}\bC\cong \]
\[\cong\mathcal O(\check M)\otimes_{\mathcal O(\check M\sslash\check G)}\bC\cong\mathcal O(\chi_{\check M}^{-1}(0))\cong\mathcal O(GL_{p+q}\times^{\Sp_{2p}}\mathcal N'_{\Sp_{2p}}).\]

\end{proof}

\subsection{Relative duality for linear periods and derived Satake for $U(p,q)$}\label{sect:shearing}
Let $\mathrm{QCoh}^{\check G}(\check M)$ be the 
dg category of $\check G$-quasi-coherent complexes on the dual variety $\check M$.
Let $\mathrm{QCoh}^{\check G}(\check M)^\renfatslash$ be sheared category 
with respect to the $\mathbb G_m$-action 
on $\check M$.
According to Proposition \ref{induced}, we have 
\begin{equation}\label{M=V}
    \mathrm{QCoh}^{\check G}(\check M)^\renfatslash\cong\mathrm{QCoh}^{\Sp_{2p}}(\wedge^2\mathbb A^{2p}[2]\times T^*\mathbb A^{2p}[q-p+1]\times\frak{gl}^*_{q-p}[2]\sslash GL_{q-p}).
\end{equation}
Here for a finite dimensional vector space $V$ and an integer $d$,
we denote by $V[d]$ the dg scheme with ring of functions the dg algebra $\Sym^*((\Pi^dV)^*[-d])$ with trivial differentials, where  $\Pi^dV$ is 
the super vector space $V$ with parity 
equal to $d$ mod $2$.\footnote{In particular, if $d$ is odd, then $\Pi^dV$
is an odd vector space and  
$\Sym^*((\Pi^dV)^*[-d])$ is 
isomorphic to the symmetric algebra with generators in $V^*$ in degree $d$.}

Denote by 
$\mathrm{QCoh}^{\check G}(\check M)^{\renfatslash}_{\mathcal N_{\check\fg^*}}$
the full subcategory consisting of 
complexes set theoretically supported on the pre-image of the 
nilpotent cone $\mu^{-1}(\mathcal N_{\check\fg^*})$ under the moment map
$\mu:\check M\to\check\fg^*$.
It follows from
Lemma \ref{exotic nilpotent cone} that the above equivalence restricts to an equivalence
\[\mathrm{QCoh}^{\check G}(\check M)^{\renfatslash}_{\mathcal N_{\check\fg^*}}\cong\mathrm{QCoh}^{\Sp_{2p}}(\wedge^2\mathbb A^{2p}[2]\times T^*\mathbb A^{2p}[q-p+1]\times\frak{gl}^*_{q-p}[2]\sslash GL_{q-p})_{\mathcal N_{\Sp_{2p}}'}\]
where the right hand side is the full-subcategory consisting of complexes supported on

\[\mathcal N_{\Sp_{2p}}'\times\{0\}\subset (\wedge^2\mathbb A^{2p}[2]\times T^*\mathbb A^{2p}[q-p+1])\times(\frak{gl}_{q-p}^*[2]\sslash GL_{q-p})\] 
where $\mathcal N_{\Sp_{2p}}'$ is the exotic nilpotent cone 
for $\Sp_{2p}$
in Section \ref{invariant quotients}.

Denote by $\mathrm{Perf}^{\check G}(\check M)^\renfatslash\cong\mathrm{Perf}^{\Sp_{2p}}(\wedge^2\mathbb A^{2p}[2]\times T^*\mathbb A^{2p}[q-p+1]\times\frak{gl}^*_{q-p}[2]\sslash GL_{q-p})$
and 
$\mathrm{Perf}^{\check G}(\check M)^\renfatslash_{\mathcal N_{\check\fg^*}}\cong\mathrm{Perf}^{\Sp_{2p}}(\wedge^2\mathbb A^{2p}[2]\times T^*\mathbb A^{2p}[q-p+1]\times\frak{gl}^*_{q-p}[2]\sslash GL_{q-p})_{\mathcal N_{\Sp_{2p}}'}$
be full subcategory of compact objects consisting of perfect complexes.

\begin{theorem}\label{main}
(1)    There is an equivalence of dg categories
\[\begin{gathered}
\on{Ind}(\Phi^b):\mathrm{Ind}(D^b_{\frak L^+U(p,q)}(\Gr_{U(p,q)}))\cong\mathrm{QCoh}^{\check G}(\check M)^\renfatslash\\
\cong \mathrm{QCoh}^{\Sp_{2p}}(\wedge^2\mathbb A^{2p}[2]\times T^*\mathbb A^{2p}[q-p+1]\times\frak{gl}^*_{q-p}[2]\sslash GL_{q-p})
\end{gathered}\]
compatible with the action of 
derived Satake equivalence 
$\on{Ind}(D^b_{\frak L^+G}(\Gr_{G}))\cong\mathrm{QCoh}^{\check G}(\check\fg^*[2])$ on both sides.
Moreover, it restricts to an equivalence 
\[\begin{gathered}
\Phi^b:D^b_{\frak L^+U(p,q)}(\Gr_{U(p,q)})\cong\mathrm{Perf}^{\check G}(\check M)^\renfatslash\\
\cong \mathrm{Perf}^{\Sp_{2p}}(\wedge^2\mathbb A^{2p}[2]\times T^*\mathbb A^{2p}[q-p+1]\times\frak{gl}^*_{q-p}[2]\sslash GL_{q-p})
\end{gathered}\]
on compact objects.

(2)    There is an equivalence of dg categories
\[\begin{gathered}
\Phi:D_{\frak L^+U(p,q)}(\Gr_{U(p,q)})\cong\mathrm{QCoh}^{\check G}(\check M)^\renfatslash_{\mathcal N_{\check\fg^*}}\\
\cong \mathrm{QCoh}^{\Sp_{2p}}(\wedge^2\mathbb A^{2p}[2]\times T^*\mathbb A^{2p}[q-p+1]\times\frak{gl}^*_{q-p}[2]\sslash GL_{q-p})_{\mathcal N_{\Sp_{2p}}'}
\end{gathered}\]
compatible with the action of 
derived Satake equivalence 
$D_{\frak L^+G}(\Gr_{G})\cong\mathrm{QCoh}^{\check G}(\check\fg^*[2])_{\mathcal N_{\check\fg^*}}$ on both sides.
Moreover, it restricts to an equivalence 
\[\begin{gathered}
\Phi^c:D_{\frak L^+U(p,q)}(\Gr_{U(p,q)})^c\cong\mathrm{Perf}^{\check G}(\check M)^\renfatslash_{\mathcal N_{\check\fg^*}}\\
\cong \mathrm{Perf}^{\Sp_{2p}}(\wedge^2\mathbb A^{2p}[2]\times T^*\mathbb A^{2p}[q-p+1]\times\frak{gl}^*_{q-p}[2]\sslash GL_{q-p})_{\mathcal N_{\Sp_{2p}}'}
\end{gathered}\]
on compact objects.

The same is true when $\Gr_{U(p,q)}$
and $\mathfrak L^+U(p,q)$ are replaced by 
$\frak LX$ and $\frak L^+G$.
\end{theorem}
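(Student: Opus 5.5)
We follow the standard de-equivariantization (Barr--Beck) argument of Bezrukavnikov--Finkelberg, as adapted in \cite{CN}. Define the functor
\[
\Phi^b: D^b_{\frak L^+G_\bR}(\Gr_\bR)\to \mathrm{Perf}^{\check G}(\check M)^\renfatslash,\qquad F\mapsto \mathrm{RHom}(\IC^0_\bR, F\star\IC_{\mathcal O(\check G)}).
\]
The target is a $\check G$-equivariant dg-module over $A_\bR$. By Theorem~\ref{Ext}, $A_\bR$ is formal with $H^*(A_\bR)\cong\mathcal O(\check M)$, graded by the $\mathbb G_m$-action. Hence, after shearing, $\Phi^b$ lands in $\check G$-equivariant dg-modules over $\mathcal O(\check M)$, and it sends $\IC^0_\bR$ to $\mathcal O_{\check M}$.

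Hecke compatibility is formal. The functor intertwines $V\star -$ with $V\otimes -$ on $\mathrm{Perf}^{\check G}(\check M)$, because $\IC_V\star\IC_{\mathcal O(\check G)}\cong V\otimes\IC_{\mathcal O(\check G)}$ as $\check G$-equivariant ind-objects. It also intertwines the $D^b_{\frak L^+G}(\Gr)$-action with the $\mathrm{Perf}^{\check G}(\check\fg^*[2])$-action through $\mu$, via the algebra map $A\to A_\bR$ of Theorem~\ref{Ext}. By Theorem~\ref{Ext}, this map becomes $\mu^*$.

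\textbf{Full faithfulness.} First, for $V,W\in\mathrm{Rep}(\check G)$,
\[
\mathrm{Hom}(V\star\IC^0_\bR, W\star\IC^0_\bR)\cong\bigl(V^*\otimes W\otimes A_\bR\bigr)^{\check G},
\]
by adjunction and rigidity of $\mathrm{Rep}(\check G)$. The right side matches $\mathrm{Hom}_{\mathrm{Perf}^{\check G}(\check M)}(V\otimes\mathcal O,W\otimes\mathcal O)$ after formality and shearing. Since $\Phi^b$ is exact and these objects generate, a d\'evissage extends full faithfulness to the thick closure.

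\textbf{Essential surjectivity.} By Proposition~\ref{generation}, $D_{\frak L^+G_\bR}(\Gr_\bR)$ is generated by the $V\star\IC^0_\bR$. Their images $V\otimes\mathcal O_{\check M}$ generate $\mathrm{Perf}^{\check G}(\check M)$: $\check M$ is affine and $\check G$ is reductive, so every equivariant perfect complex is built from the free modules $V\otimes\mathcal O$. Ind-completion then gives part (1).

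For part (2), we work inside $\mathrm{Ind}(D^b)$. The compact objects $D^c$ form the thick subcategory generated by the $!$-extensions of the constant sheaves on orbits. We identify their images with complexes supported on $\mu^{-1}(\mathcal N_{\check\fg^*})$. The BF strategy applies: $D_{\frak L^+G_\bR}(\Gr_\bR)$ is the kernel in $\mathrm{Ind}(D^b)$ of localization away from the augmentation ideal of $H^*_{\frak L^+G_\bR}(\mathrm{pt})$, which acts through $\mathcal O(\check\fg^*)^{\check G}\to\mathcal O(\check M)$. Under $\Phi$ this corresponds to set-theoretic support on $\chi_{\check M}^{-1}(\bar\mu^{-1}(0))$. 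Lemma~\ref{exotic nilpotent cone} identifies this set with $\mu^{-1}(\mathcal N_{\check\fg^*})$, equivalently $\chi_{\check M}^{-1}(0)$. Restricting gives both $\Phi$ and $\Phi^c$, compatibly with derived Satake with nilpotent support. The final assertion for $\frak LX$ follows by transport along Proposition~\ref{real-symmetric}, which is compatible with the Hecke actions.

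\textbf{Main obstacle.} The hardest step is making the compact versus locally compact comparison in (2) precise, namely checking that support conditions match. I would first try computing $\Phi$ on the $!$-extension of the constant sheaf on $\Gr^0_\bR$, aiming to get the Koszul complex $\mathcal O_{\check M}\otimes_{\mathcal O(\check M\sslash\check G)}\bC$. Theorem~\ref{Non-equ Ext} is consistent with this. I would then argue generation by such objects twisted by $V$.
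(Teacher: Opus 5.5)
Your part (1) is the argument the paper intends. Its proof of Theorem~\ref{main} just cites \cite[Theorem 5.5]{CMNO} and \cite[Theorem 4.8]{C}, with Proposition~\ref{generation} and Theorem~\ref{Ext} as inputs. Your de-equivariantized $\mathrm{RHom}$ functor, the Hom computation on the objects $V\star\IC^0_\bR$, generation and ind-completion are exactly that argument, and the transport to $\frak LX$ via Proposition~\ref{real-symmetric} also matches.

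The genuine problem is in part (2): you misidentify the compact objects. In the equivariant category, the $!$-extension of the (equivariant) constant sheaf on an orbit is locally compact but not compact. For $\lambda=0$ it is $\IC^0_\bR$ itself, which you correctly send to $\mathcal O_{\check M}$, and $\mathcal O_{\check M}$ is not supported on $\mu^{-1}(\mathcal N_{\check\fg^*})$. These objects generate all of $D^b$, so your d\'evissage would give $D^c=D^b$ and the support statement would fail. For the same reason, the test computation in your last paragraph (the $!$-extension of the constant sheaf on $\Gr^0_\bR$) returns $\mathcal O_{\check M}$, not a Koszul complex.

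The fix is to use free, averaged objects. Let $\mathrm{Av}_!$ be the left adjoint of the forgetful functor $D_{\frak L^+G_\bR}(\Gr_\bR)\to D(\Gr_\bR)$.
\begin{itemize}
\item Gluing along orbits (stabilizers are connected by Lemma~\ref{connectedness}) shows that $D^c$ is generated by the objects $\mathrm{Av}_!(\delta_{t^\lambda})$.
\item Over the base point, $\mathrm{Av}_!(\delta_{t^0})$ is, up to shift, the Koszul complex $\IC^0_\bR\otimes^{\mathbf L}_{H^*_K(\mathrm{pt})}\bC$. Here $H^*_{\frak L^+G_\bR}(\mathrm{pt})=H^*_K(\mathrm{pt})\cong\mathcal O(\check M)^{\check G}=\mathcal O(\fc_{GL_p}\times\fc_{GL_p}\times\fc_{GL_\delta})$, as in the proof of Theorem~\ref{Non-equ Ext}. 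This ring is strictly larger than $\mathcal O(\check\fg^*)^{\check G}$, which acts only through $\bar\mu^*$; your proposal conflates the two.
\item Hence $\Phi(\mathrm{Av}_!\delta_{t^0})\simeq\mathcal O_{\check M}\otimes^{\mathbf L}_{\mathcal O(\check M)^{\check G}}\bC$. This is consistent with Theorem~\ref{Non-equ Ext}, because $\mathrm{RHom}(\mathrm{Av}_!\delta_{t^0},-)$ is the non-equivariant $\mathrm{RHom}$ out of $\IC^0_\bR$.
\item Right convolution with $\psi(\IC_V)$ (the convention used to define $A_\bR$) makes sense on non-equivariant sheaves and commutes with $\mathrm{Av}_!$. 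Applying $\mathrm{Av}_!$ to the generation argument of Proposition~\ref{generation} then shows that the objects $\mathrm{Av}_!(\delta_{t^0})\star\psi(\IC_V)\simeq\mathrm{Av}_!(\psi(\IC_V))$ generate $D^c$.
\item On the spectral side, the objects $V\otimes\mathcal O_{\check M}\otimes^{\mathbf L}_{\mathcal O(\check M)^{\check G}}\bC$ generate the perfect complexes supported on $\chi_{\check M}^{-1}(0)$. Lemma~\ref{exotic nilpotent cone} identifies this locus with $\mu^{-1}(\mathcal N_{\check\fg^*})$.
\end{itemize}
With this correction, your description of $D$ inside $\mathrm{Ind}(D^b)$ is right, provided you take it as the torsion part for the augmentation ideal of $H^*_K(\mathrm{pt})$. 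It then yields both $\Phi$ and $\Phi^c$.
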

\begin{proof}
    The same proof as in \cite[Theorem 5.5.]{CMNO} or \cite[Theorem 4.8]{C} in view of Proposition \ref{generation} and 
    the Theorem \ref{Ext}.

\end{proof}

\subsection{Coherent sheaves on exotic nilpotent cones}
Consider the full-subcategory 
\[D^b_{\{\frak L^+U(p,q)\}}(\Gr_{U(p,q)})\subset 
D^b_{}(\Gr_{U(p,q)})\]
consisting of complexes that are constructible with respect to the $\frak L^+U(p,q)$-orbits stratification
on $\Gr_{U(p,q)}$.
Let
\[
\mathrm{QCoh}
^{GL_{p+q}}(GL_{p+q}\times^{\Sp_{2p}}\mathcal N'_{\Sp_{2p}})^\renfatslash
\]
be the sheared dg-category of equivariant quasi-coherent complexes on
the null-cone
$\chi_{\check M}^{-1}(0)\cong GL_{p+q}\times^{\Sp_{2p}}\mathcal N'_{\Sp_{2p}}$.
It has a natural pushforward functor to
$\mathrm{QCoh}^{\check G}(\check M)^\renfatslash$.

Let $\mathrm{Perf}
^{GL_{p+q}}(GL_{p+q}\times^{\Sp_{2p}}\mathcal N'_{\Sp_{2p}})^\renfatslash\subset \mathrm{QCoh}
^{GL_{p+q}}(GL_{p+q}\times^{\Sp_{2p}}\mathcal N'_{\Sp_{2p}})^\renfatslash$ be the subcategory of compact objects.

\begin{theorem}\label{exotic-coherent}
    There is an equivalence of dg categories
\[\mathrm{Ind}(D^b_{\{\frak L^+U(p,q)\}}(\Gr_{U(p,q)}))\cong\mathrm{QCoh}
^{GL_{p+q}}(GL_{p+q}\times^{\Sp_{2p}}\mathcal N'_{\Sp_{2p}})^\renfatslash\]
which restricts to an equivalence 
\[D^b_{\{\frak L^+U(p,q)\}}(\Gr_{U(p,q)})\cong\mathrm{Perf}
^{GL_{p+q}}(GL_{p+q}\times^{\Sp_{2p}}\mathcal N'_{\Sp_{2p}})^\renfatslash\]
on compact objects.
\end{theorem}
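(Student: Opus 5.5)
The plan is to run the same Koszul/Tannakian reconstruction argument used for Theorem~\ref{main}, but now for the non-equivariant category. On the constructible side the key object is $\IC^0_\bR$, viewed in $D^b_{\{\frak L^+U(p,q)\}}(\Gr_{U(p,q)})$. On the spectral side it is the structure sheaf of the null-cone $GL_{p+q}\times^{\Sp_{2p}}\mathcal N'_{\Sp_{2p}}\simeq\chi_{\check M}^{-1}(0)$. The de-equivariantized endomorphism algebra of $\IC^0_\bR$ in $\mathrm{Ind}(D^b(\Gr_\bR))$ is exactly $A'_\bR$. By Theorem~\ref{Non-equ Ext}, $A'_\bR$ is formal, with cohomology $\mathcal O(\chi_{\check M}^{-1}(0))$, $\check G\times\mathbb G_m$-equivariantly. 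I will need to check that the isomorphism there is $\check G$-equivariant and compatible with the $\mathcal O(\check\fg^*)$-action; this follows from the construction via $\iota_K^!\IC_{\mathcal O(\check G)}$ and the base change $\mathcal O(\check M)\otimes_{\mathcal O(\check M\sslash\check G)}\bC$.

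The steps in order are as follows.
\begin{enumerate}
\item \textbf{Generation.} Show that $D^b_{\{\frak L^+U(p,q)\}}(\Gr_{U(p,q)})$ is generated, as an idempotent-complete triangulated category, by the Hecke translates $V\star\IC^0_\bR$, $V\in\mathrm{Rep}(\check G)$. Objects constructible along the orbits are generated by $!$-extensions of constant sheaves on orbits. The orbits $\Gr^\lambda_\bR$ are vector bundles over real flag manifolds with connected stabilizers (Lemma~\ref{connectedness}), so local systems on each orbit are trivial and every constant sheaf lies in the span of IC-complexes. Then Proposition~\ref{generation}(2) puts every $\IC^\lambda_\bR$ inside the Hecke span of $\IC^0_\bR$, after forgetting equivariance, because forgetting commutes with convolution by $\psi(\IC_V)$.
\item \textbf{The functor.} Define
\[F\longmapsto \mathrm{RHom}_{\mathrm{Ind}(D^b(\Gr_\bR))}(\IC^0_\bR,F\star\IC_{\mathcal O(\check G)}).\]
This is a dg-module over $A'_\bR$ carrying a compatible $\check G$-action and a weight (cohomological) grading, so it lands in $\check G$-equivariant sheared modules over $\mathcal O(\chi_{\check M}^{-1}(0))$. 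On generators it sends $V\star\IC^0_\bR$ to $V\otimes\mathcal O_{\chi_{\check M}^{-1}(0)}$.
\item \textbf{Full faithfulness on generators.} This is the statement
\[\mathrm{Hom}(V\star\IC^0_\bR,W\star\IC^0_\bR)\cong\big(V^*\otimes W\otimes H^*(A'_\bR)\big)^{\check G}.\]
It follows from adjunction of Hecke functors (rigidity of $\mathrm{Rep}(\check G)$) together with the formality in Theorem~\ref{Non-equ Ext}(1).
\item \textbf{Essential surjectivity.} Use that $\mathrm{Perf}^{\check G}$ of the null-cone is generated by the $V\otimes\mathcal O$, since $\check G$ is reductive and the null-cone is affine. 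Combining with the previous step gives the equivalence on compact objects, and ind-completion gives the first statement.
\end{enumerate}

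The main obstacle is making the formality and equivariance compatible. I need an $A_\infty$-quasi-isomorphism $A'_\bR\simeq H^*(A'_\bR)$ that is $\check G$-equivariant and compatible with the $\mathcal O(\check\fg^*)$-action, so that the dg-module category is really the sheared $\mathrm{QCoh}$. I would try to obtain this as in \cite{BF}/\cite{CMNO}: realize everything in mixed (weight) sheaves, using the purity of $\iota_K^!\IC_{\mathcal O(\check G)}$ from Theorem~\ref{Non-equ Ext}. Alternatively, deduce it by base change from the equivariant statement of Theorem~\ref{main}: non-equivariant objects correspond to tensoring over $H^*_{K_c}(\mathrm{pt})\cong\mathcal O(\check M\sslash\check G)$ with $\bC$. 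I expect this second approach to identify the non-equivariant category with the base change of $\mathrm{Perf}^{\check G}(\check M)^\renfatslash$ along $0\to\check M\sslash\check G$. That base change is precisely the null-cone category, and flatness of $\chi_{\check M}$ (equidimensionality, cf.\ \cite{S,Kn2}) ensures that the derived and classical fibers agree.
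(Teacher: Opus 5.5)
Your route is the paper's. Its proof of Theorem~\ref{exotic-coherent} runs the CMNO argument with a generation statement and with Theorem~\ref{Non-equ Ext}, which is what your Steps 2--4 do. The gap is in Step 1.

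Lemma~\ref{connectedness} only says that $\mathfrak L^+G_\bR$-\emph{equivariant} local systems on $\Gr^\lambda_\bR$ are trivial. For the non-equivariant category you need $\pi_1(\Gr^\lambda_\bR)=0$, and that does not follow. The orbit $\Gr^\lambda_\bR$ retracts onto $K_c/(K_c\cap L_{\bR,\lambda})$, and $K_c\cap L_{\bR,\lambda}$ is connected. So $\pi_1(\Gr^\lambda_\bR)$ is the cokernel of
\[
\pi_1(K_c\cap L_{\bR,\lambda})\to\pi_1(U(p)\times U(q))\cong\bZ^2 .
\]
The compact parts $U(n_i)$ of the factors $GL_{n_i}(\bC)$ sit diagonally and contribute $(1,1)$. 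The factor $U(p-a)\times U(q-a)$ contributes the generator of $\pi_1(U(q))$ when $q>a$, and that of $\pi_1(U(p))$ when $p>a$. Hence every orbit is simply connected when $p<q$: your conclusion is right there, but for this reason, not because of connected stabilizers. For $p=q$ and $\lambda_p\geq1$, however, $\pi_1(\Gr^\lambda_\bR)\cong\bZ$. For $U(1,1)$ these orbits are vector bundles over the circle of isotropic lines.

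For such $\lambda$, let $\mathcal L_\zeta$ be a rank-one local system on $\Gr^\lambda_\bR$ with monodromy $\zeta\neq1$.
\begin{itemize}
\item Complexes whose $*$-restrictions to all orbits have unipotent cohomology local systems form a thick subcategory. It contains every $V\star\IC^0_\bR$, since these are equivariant and hence constant along orbits. It does not contain $j_{\lambda!}\mathcal L_\zeta$. So Step 1 fails for the category as literally defined.
\item For $U(1,1)$ it is worse. There $\Gr^\lambda_\bR\simeq S^1$, so $H^*(\Gr^\lambda_\bR,\mathcal L_\zeta)=0$. Hence $j_{\lambda*}\mathcal L_\zeta$ is right orthogonal to every $j_{\mu!}\bC$, and therefore to every $V\star\IC^0_\bR$. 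Your functor sends this nonzero object to $0$, and no formality statement can repair that.
\end{itemize}
The paper's one-line proof cites Proposition~\ref{generation}, which is an equivariant statement, so it does not address this point either.

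What makes the argument work for $p=q$ is to read $D^b_{\{\mathfrak L^+U(p,q)\}}$ as complexes with unipotent monodromy along orbits. Equivalently, take the thick subcategory generated by $!$-extensions of constant sheaves on orbits. Step 1 then holds with ``trivial'' replaced by ``unipotent''. Unipotent local systems are iterated extensions of constant ones, since $\mathrm{Ext}^1=H^1$ of the orbit. The d\'evissage between the $j_{\lambda!}\bC$ and the forgotten $\IC^\lambda_\bR$ uses only that the restrictions of $\IC^\lambda_\bR$ to orbits are equivariant, hence constant.

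With that convention the rest of your plan is fine. This covers equivariant formality via weights, or base change from Theorem~\ref{main} along $\check M\to\check M\sslash\check G$ using flatness of $\chi_{\check M}$.
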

\begin{proof}
 The same proof as in \cite[Theorem 5.5.]{CMNO} or \cite[Theorem 4.8]{C} in view of Proposition \ref{generation} and 
    the Theorem \ref{Non-equ Ext}.
    
\end{proof}
\begin{remark}
    Unlike Theorem \ref{main}, we do not know an analogy of Theorem \ref{exotic-coherent} in the setting of $\mathfrak LX$.
\end{remark}

\subsection{Abelian equivalences}\label{sect:abelian}
Put $\delta=q-p$, $H=\check G_X=\Sp_{2p}$, $V=\mathbb A^{2p}$, and
$W=T^*V=V\oplus V^*$. Consider the dg algebras with zero differential
\begin{align*}
\mathfrak A_{p,q}
 &=\Sym^*\left(\wedge^2V[1]\oplus(\Pi^{\delta+1}W)[\delta]
             \oplus\bigoplus_{i=1}^{\delta}\mathbb A^1[2i-1]\right),\\
\mathfrak B_{p,q}
 &=\Sym^*\left(\wedge^2V[-2]\oplus(\Pi^{\delta+1}W)[-\delta-1]
             \oplus\bigoplus_{i=1}^{\delta}\mathbb A^1[-2i]\right).
\end{align*}
Write $\mathfrak A=\mathfrak A_{p,q}$ and $\mathfrak B=\mathfrak B_{p,q}$. Our convention
is that $E[k]$ lies in cohomological degree $-k$, and the symmetric
algebras use the sign $(-1)^{ij+\epsilon\eta}$ for cohomological
degrees $i,j$ and internal parities $\epsilon,\eta$. All modules are
left modules. Their internal parity is prescribed by the central
element $(-I)^{\delta+1}\in H$; thus an independent parity shift does
not give additional simple objects.

The algebra $\mathfrak A$ is finite-dimensional and nonpositive, while $\mathfrak B$ is
polynomial with generators in strictly positive degrees and
$\mathfrak B^0=\mathbb C$. Let $D^H_{fd}(\mathfrak A\on{-mod})$ denote the category of
equivariant dg modules with finite total cohomology, and let
$D^H_{perf}(\mathfrak B\on{-mod})$ be the thick subcategory generated by
$\mathfrak B\otimes U$ for finite-dimensional $H$-representations $U$.
Using the $H$-equivariant self-dualities of $\wedge^2V$ and $W$,
Koszul duality gives
\[
\kappa:D^H_{perf}(\mathfrak B\on{-mod})\xrightarrow{\sim}D^H_{fd}(\mathfrak A\on{-mod}),
\qquad M\longmapsto\mathbb C\overset{\mathbf L}{\otimes}_{\mathfrak B} M,
\]
where the derived fiber carries its Koszul $\mathfrak A$-action; see
\cite[Section 3.8]{BFGT}. Equivalently, $\kappa$ is the inverse of
$N\mapsto R\Hom_{\mathfrak A}(\mathbb C,N)$ under
$\mathfrak B\simeq R\operatorname{End}_{\mathfrak A}(\mathbb C)^{\mathrm{op}}$.

The standard $t$-structure on $D^H_{fd}(\mathfrak A\on{-mod})$ is bounded, with
heart the finite-dimensional $H$-equivariant $H^0(\mathfrak A)$-modules. Hence
\[
D^H_{fd}(\mathfrak A\on{-mod})^\heartsuit\cong
\begin{cases}
\wedge^*W\on{-mod}(\mathrm{Rep}(H)),&p=q,\\
\mathrm{Rep}(H),&p<q.
\end{cases}
\]
Recall from Theorem~\ref{main} the derived Satake equivalence
\[
\begin{aligned}
\Phi^b:D^b_{\frak L^+U(p,q)}(\Gr_{U(p,q)})
&\xrightarrow{\sim}\mathrm{Perf}^{H}\left(
\wedge^2V[2]\times W[\delta+1]\times
\frak{gl}_{\delta}^*[2]\sslash GL_{\delta}\right)
&\xrightarrow{\sim}D^H_{perf}(\mathfrak B\on{-mod})
\end{aligned}
\]

\Needspace{16\baselineskip}
\begin{theorem}\label{abelian equ}
The composed equivalence
\[
\kappa\circ\Phi^b:D^b_{\frak L^+U(p,q)}(\Gr_{U(p,q)})
\xrightarrow{\sim}D^{\Sp_{2p}}_{fd}(\mathfrak A_{p,q}\on{-mod})
\]
is $t$-exact for the perverse $t$-structure on the left and the standard
$t$-structure on the right. It induces an equivalence of abelian categories
\[
\mathrm{Perv}_{\frak L^+U(p,q)}(\Gr_{U(p,q)})\cong
\begin{cases}
\wedge^*(T^*\mathbb A^{2p})\on{-mod}(\mathrm{Rep}(\Sp_{2p})),&p=q,\\
\mathrm{Rep}(\Sp_{2p}),&p<q.
\end{cases}
\]
Here modules and representations are finite-dimensional. The same holds
for $\frak LX$ with its $\frak L^+G$-action.
\end{theorem}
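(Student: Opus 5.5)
The plan is to check $t$-exactness of $\kappa\circ\Phi^b$ on a generating set of simple objects. The heart on each side is then identified by matching simples and their Ext groups in low degrees.

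\emph{Step 1: simples match.} By Proposition~\ref{generation}, the simple perverse sheaves are the $\IC_\bR^\lambda$, $\lambda\in\Lambda_A^+$. Proposition~\ref{exactness}(2) and Lemma~\ref{real coweights} show that each $\IC_\bR^\lambda$ occurs, up to shift, as a direct summand of $V*\IC_\bR^0$. Under $\Phi^b$, the object $V*\IC_\bR^0$ goes to $\mathcal O\otimes V|_{H}$ placed in sheared degrees. Restricted to $H=\Sp_{2p}$ via the Arthur map $\rho_X$, the representation $V$ decomposes into $H\times SL_2$-isotypic pieces. The $SL_2$-weight $k$ records the shift, as in the standard Arthur-$SL_2$ analysis of \cite[Corollary 10.6.2]{N}, where the non-exactness of the Hecke action is exactly this shift.

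Each summand $\mathfrak B\otimes U_\mu$, with $U_\mu$ irreducible for $H$, goes under $\kappa$ to $\mathbb C\otimes U_\mu$, which is simple in the heart of $D^H_{fd}(\mathfrak A\on{-mod})$. I would match $\IC^\lambda_\bR$ with $\mathfrak B\otimes U_{\mu(\lambda)}$ placed in degree zero, where $\mu(\lambda)$ is the $\Sp_{2p}$-highest weight obtained from $\lambda=(\lambda_1,\dots,\lambda_p,0,\dots,0,-\lambda_p,\dots)$. The dimension formula $p_\bR(\lambda)=-\langle\lambda,\rho\rangle$ should cancel precisely the shear shift attached to $V_{\lambda'}$ with $\lambda'=(\lambda_1,\dots,\lambda_p,0,\dots)$. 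To pin this down, I would compute the $\check{\mathbb G}_m$-weight of the highest-weight line through $\lambda(t)=2\rho_{\check L}$. This gives a bijection between simples on the two sides, with both placed in the heart.

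\emph{Step 2: Ext vanishing.} A triangulated functor between bounded $t$-categories with finite-length hearts is $t$-exact if it sends simples to objects of the heart. Indeed, both hearts are generated under extensions by their simples, and $\kappa\circ\Phi^b$ is an equivalence sending simples to simples by Step~1. Each $t$-structure is then recovered from its simples: $D^{\le0}$ is the extension closure of the objects $S[n]$ with $n\ge0$, and dually for $D^{\ge0}$. So both $t$-structures are determined by the set of simples, and the equivalence matches them. This formal step needs no Ext computation. I still need to double-check that the perverse $t$-structure of \cite{N} is bounded with finite-length heart, which I would take as given.

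\emph{Step 3: identify the heart.} On the coherent side the heart is the category of finite-dimensional $H$-equivariant $H^0(\mathfrak A)$-modules. Here $H^0(\mathfrak A)=\wedge^*W$ when $\delta=0$, since $W$ sits in degree $0$ with odd parity. When $\delta>0$, all generators have negative degree, so $H^0(\mathfrak A)=\mathbb C$. This gives the stated cases. For $\frak LX$, transport everything through Proposition~\ref{real-symmetric}, which is perverse-exact.

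\emph{Main obstacle.} The delicate point is Step~1, namely that every simple lands in degree exactly zero. This requires the shear normalization, the parity convention $(-I)^{\delta+1}$, and the perversity $p_\bR$ to agree on the nose. I would verify this first on $\IC_\bR^0\mapsto\mathfrak B$, then on minuscule $V=\mathbb C^N$, and extend using the monoidality of $\psi$.
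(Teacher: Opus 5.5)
Your proposal is correct and is essentially the paper's proof. The paper likewise shows that $\kappa\circ\Phi^b(\IC^\lambda_\bR)\cong V^\lambda$ lies in the standard heart, uses finite length of the perverse heart and boundedness of both $t$-structures to get $t$-exactness and an equivalence of hearts, and then identifies the heart as finite-dimensional $\Sp_{2p}$-equivariant $H^0(\mathfrak A_{p,q})$-modules. The one difference is that the paper simply asserts $\Phi^b(\IC^\lambda_\bR)\cong\mathfrak B_{p,q}\otimes V^\lambda$, with no shift, as following from the construction, whereas your Step~1 sketches a justification through the Arthur-$SL_2$ weights of $V_{\lambda'}*\IC^0_\bR$. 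For that sketch, the needed inputs are that the $V^\lambda$-isotypic part of $V_{\lambda'}|_{\Sp_{2p}}$ occurs once and in $\lambda(t)$-weight zero, and that $\IC^\lambda_\bR$ occurs in $V_{\lambda'}*\IC^0_\bR$ in perverse degree zero; the two shifts are each zero rather than cancelling one another.
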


\begin{proof}
Let $\lambda\in\Lambda_A^+$ be a dominant real coweight, and let
$V^\lambda\in\mathrm{Rep}(\Sp_{2p})$ and $\IC^\lambda_\bR$ be the
corresponding irreducible representation and IC complex. It follows
from the construction that
\[
\Phi^b(\IC^\lambda_\bR)\cong\mathfrak B_{p,q}\otimes V^\lambda,
\qquad
\kappa(\mathfrak B_{p,q}\otimes V^\lambda)\cong V^\lambda.
\]
Hence $\kappa\circ\Phi^b(\IC^\lambda_\bR)\cong V^\lambda$ lies in the
standard heart. According to Proposition~\ref{generation}(1), the
simple perverse sheaves are the $\IC^\lambda_\bR$. Since every object
of the perverse heart has finite length, $\kappa\circ\Phi^b$ sends
this heart into the standard heart. Both $t$-structures are bounded,
so $\kappa\circ\Phi^b$ is $t$-exact and induces an equivalence
\[\mathrm{Perv}_{\frak L^+U(p,q)}(\Gr_{U(p,q)})
\cong D^{\Sp_{2p}}_{fd}(\mathfrak A_{p,q}\on{-mod})^\heartsuit.
\]
The theorem follows from the description of the heart above. The
assertion for $\frak LX$ follows from Proposition~\ref{real-symmetric}.
\end{proof}

\end{document}